\newcommand{\into}{\hookrightarrow}
\newcommand{\abs}[1]{\left\lvert#1\right\rvert}
\newcommand{\N}{\ensuremath{\mathbb{N}}}
\newcommand{\Z}{\ensuremath{\mathbb{Z}}}
\newcommand{\C}{\ensuremath{\mathbb{C}}}
\newcommand{\A}{\ensuremath{\mathbb{A}}}
\newcommand{\M}{\mathcal{M}}
\newcommand{\Mbar}{\overline{\mathcal{M}}}
\renewcommand{\P}{\ensuremath{\mathbb{P}}}
\renewcommand{\O}{\ensuremath{\mathcal{O}}}
\DeclareMathOperator{\Hilb}{Hilb}
\DeclareMathOperator{\rk}{rk}
\DeclareMathOperator{\Span}{Span}
\DeclareMathOperator{\Gr}{Gr}
\DeclareMathOperator{\Hom}{Hom}
\DeclareMathOperator{\Sym}{Sym}
\DeclareMathOperator{\U}{U}
\newcommand{\margincolor}{red}      
\definecolor{darkgreen}{rgb}{0,0.7,0}
\newcounter{margincounter}
\newcommand{\marginnum}{
\ifnum\value{margincounter}<10
\textcolor{\margincolor}{\begin{picture}(0,0)\put(2.2,2.4){\circle{9}}\end{picture}\footnotesize\arabic{margincounter}}
\else\ifnum\value{margincounter}<100
\textcolor{\margincolor}{\begin{picture}(0,0)\put(4.256,2.5){\circle{11}}\end{picture}\footnotesize\arabic{margincounter}}
\else
\textcolor{\margincolor}{\begin{picture}(0,0)\put(6.8,2.5){\circle{14}}\end{picture}\footnotesize\arabic{margincounter}}
\fi\fi
}
\theoremstyle{plain}
\newtheorem{theorem}{Theorem}
\numberwithin{theorem}{section}
\newtheorem{thm}[theorem]{Theorem}
\newtheorem*{thm*}{Theorem}
\newtheorem{prop}[theorem]{Proposition}
\newtheorem{obs}[theorem]{Observation}
\newtheorem{cor}[theorem]{Corollary}
\newtheorem{lem}[theorem]{Lemma}
\theoremstyle{definition}
\newtheorem{Def}[theorem]{Definition}
\theoremstyle{remark}
\newtheorem{rem}[theorem]{Remark}
\newtheorem{ex}[theorem]{Example}
\newtheorem{question}[theorem]{Question}
\newtheorem{notation}[theorem]{Notation}
\DeclareMathOperator{\Mon}{Mon}
\DeclareMathOperator{\init}{in}
\DeclareMathOperator{\Trop}{Trop}
\renewcommand{\O}{\mathcal{O}}
\begin{document}
\title{The matroid stratification of the Hilbert scheme of points on
  $\P^1$} \date{\today} \author{Rob Silversmith}

\begin{abstract}
  Given a homogeneous ideal $I$ in a polynomial ring over a field, one
  may record, for each degree $d$ and for each polynomial $f\in I_d$,
  the set of monomials in $f$ with nonzero coefficients. These data
  collectively form the \textit{tropicalization} of $I$. Tropicalizing
  ideals induces a ``matroid stratification'' on any (multigraded)
  Hilbert scheme. Very little is known about the structure of these
  stratifications.


  In this paper, we explore many examples of matroid strata, including
  some with interesting combinatorial structure, and give a convenient
  way of visualizing them. We show that the matroid stratification in
  the Hilbert scheme of points $(\P^1)^{[k]}$ is generated by all
  Schur polynomials in $k$ variables. 
  We end with an application to the $T$-graph problem of
  $(\A^2)^{[n]};$ classifying this graph is a longstanding open
  problem, and we establish the existence of an infinite class of
  edges.
\end{abstract}
\maketitle

\section{Introduction}
Let $\mathbbm{k}$ be a field. The \textit{support} of a homogeneous
polynomial $f\in\mathbbm{k}[x_1,\ldots,x_r]$ is the set of monomials
with nonzero coefficient in $f$. Let
$I\subseteq R=\mathbbm{k}[x_1,\ldots,x_r]$ be a homogeneous ideal. For
each degree $d$, the data of all supports of polynomials in $I_d$ 
comprise a combinatorial portrait called the \textit{tropicalization}
of $I_d$, denoted $\Trop(I_d)$. A \textit{matroid} (see \cite{Oxley2006}
or Definition \ref{Def:Matroid}) is the data of a finite set $E$,
together with a subset of $M\subseteq2^E$ satisfying certain
combinatorial conditions. $\Trop(I_d)$ is an example of 
a matroid, where $E=\Mon_d$ is the set of degree $d$ monomials in
$x_1,\ldots,x_r$.

In this paper, we study $I$ via the infinite sequence of matroids
$\Trop(I)=(\Trop(I_d))_{d\ge0}$; this sequence is the
\textit{tropicalization} of $I$. The matroids satisfy a certain
combinatorial compatibility condition, namely the defining condition
of 
a \textit{tropical ideal} (Definition
\ref{Def:TropicalIdeal}). 

A (multigraded) Hilbert scheme is a moduli space parametrizing
homogeneous ideals. The fibers of the function $I\mapsto\Trop(I)$
define a ``matroid stratification'' on any Hilbert scheme, possibly
with countably many strata, analogous to, and generalizing, the more
well-known matroid stratification on
$\Gr(m,\mathbbm{k}^n)$. 

We identify the matroid stratification in the case of principal
homogeneous ideals in $\mathbbm{k}[x,y]$, i.e. in the Hilbert scheme
of points $(\P^1)^{[k]}$. Note that a symmetric polynomial in $k$
variables defines a divisor on $(\P^1)^{[k]}$ via the identification
$(\P^1)^{[k]}\cong\Sym^k(\P^1).$ Then: 

\medskip

\noindent\textbf{Theorem \ref{thm:SchurPolynomials}.} The matroid stratification on
$(\P^1)^{[k]}$ is the stratification generated by all Schur
polynomials $s_\lambda$ in $k$ variables.


 \medskip



 We end with an application to the \textit{$T$-graph problem} for
 $(\A^2)^{[n]}$, which was our original motivation for the
 project. Let $X$ be a variety with the action of an algebraic torus
 $T$ such that the fixed point set $X^T$ is finite. The $T$-graph of
 $X$ is a graph with vertex set $X^T$, and an edge between two fixed
 points if they are the two limit points of a 1-dimensional
 $T$-orbit. A Hilbert scheme has a $T=(\C^*)^r$-action by scaling the
 variables $x_1,\ldots,x_r.$ Determining the $T$-graphs of Hilbert
 schemes is a difficult problem that has been studied by Iarrobino,
 Evain, Altmann and Sturmfels, Hering and Maclagan, and others
 \cite{Iarrobino1972,Evain2004,AltmannSturmfels2005,HeringMaclagan2012}. We
 show:

\medskip

\noindent \textbf{Theorem \ref{thm:Necklace3}.} Let $\mathbbm{k}=\C.$
Let $k\ge1$ and $d>k.$ Let $S$ be the set of 1-dimensional
$(\C^*)^2$-orbits in $(\A^2)^{[dk]}$ whose limit points are the two
fixed points $(x^k,y^d)$ and $(x^d,y^k).$ Then $S$ is a finite set, in
natural bijection with the set of binary necklaces with $k$ black and
$d-k$ white beads. (In particular, $(x^k,y^d)$ and $(x^d,y^k)$ are
connected by an edge in the $T$-graph of $(\A^2)^{[dk]}$.)






\medskip

In Section \ref{sec:NecklaceToMatroid}, we pose some easily-stated
questions from combinatorial linear algebra that we cannot answer. The
answers would elucidate the relationship between Theorem 
\ref{thm:Necklace3} and Theorem \ref{thm:SchurPolynomials}.




\medskip


\noindent\textbf{Relation to other work.} The forthcoming paper \cite{FinkGiansiracusaGiansiracusa2019} of
Fink-Giansiracusa-Giansiracusa is closely related to this
one. Motivated by understanding ``tropical Hilbert schemes,'' which
are moduli spaces of tropical ideals over arbitrary valued fields,
they also investigate the tropicalizations of ideals of points in
$\P^1.$ Our results complement each other: this paper considers
trivially valued fields, and Hilbert schemes of arbitrarily many
points on $\P^1$, while they consider arbitrary valued fields, but
have results mainly for $\le2$ points in $\P^1$. We hope that these
perspectives can be merged to describe tropical Hilbert schemes of
arbitrarily many points in $\P^1$.

Zajaczkowska's Ph.D thesis \cite{Zajaczkowska2018} studied the
tropical Hilbert schemes of hypersurfaces of degrees 1 and 2 in $\P^1$
and $\P^2$. Among other things, the thesis contains the case $k=2$ of
Corollary \ref{thm:Necklace3}.

\subsection*{Acknowledgements} I am grateful to Diane Maclagan, for
guidance and information about the $T$-graph problem, for pointing out
that the sequences of matroids studied in this paper are a special
case of tropical ideals, for referring me to known results in the
field
\cite{MaclaganRincon2018,FinkGiansiracusaGiansiracusa2019,Zajaczkowska2018},
for giving feedback, and for writing the useful and convenient
\texttt{TEdges} package for Macaulay2. I am also grateful to Tim Ryan
--- we jointly generated data that led to Corollary
\ref{thm:Necklace3}. I would also like to thank Noah Giansiracusa for
introducing me to \cite{FinkGiansiracusaGiansiracusa2019}, Rohini
Ramadas for helpful conversations about tropical geometry, and the
anonymous referee for suggesting many improvements. This project
was supported by NSF DMS-1645877, and by postdoctoral positions at the
Simons Center for Geometry and Physics and at Northeastern University.

\section{Multigraded Hilbert schemes and their matroid
  stratifications}\label{sec:TCurves}
\subsection{Multigraded Hilbert schemes}\label{sec:Multigraded}
Multigraded Hilbert schemes are the natural moduli spaces of
homogeneous ideals in a polynomial ring. Let $\mathbbm{k}$ be a field,
and consider the polynomial ring $R=\mathbbm{k}[x_1,\ldots,x_r]$.
\begin{Def}
  For $b\in\Z_{>0},$ a (positive)
  $\Z^b$-\textit{multigrading}\footnote{There is a more general notion
    of multigrading that we will not need, see
    \cite{HaimanSturmfels2004}.}
  $\mathbf{a}=(\vec a_1,\ldots,\vec a_r)$ on $R$ is an assignment of a
  \textit{multidegree}
  $\vec a_i\in\Z_{\ge0}^b\setminus\{(0,\ldots,0)\}$ to each variable
  $x_i$. A multigrading is \textit{nondegenerate} if the rowspan of
  $\mathbf{a}$ is a rank-$b$ lattice in $\Z^r.$
\end{Def}
All multigradings from now on are assumed to be nondegenerate. A
$\Z^b$-multigrading defines a decomposition
$R=\bigoplus_{d\in\Z_{\ge0}^b}R_d$. Any $\mathbf{a}$-homogeneous ideal
$I\subseteq R$ has a \textit{multigraded Hilbert
  function}\footnote{Positivity of $\mathbf{a}$ is necessary here;
  otherwise $R_d/(I\cap R_d)$ need not be finite-dimensional.}
$h:\Z^b_{\ge0}\to\N$, defined by
$h(d)=\dim_{\mathbbm{k}}(R_d/(I\cap R_d))$.

Haiman and Sturmfels \cite{HaimanSturmfels2004} define a
\textit{multigraded Hilbert scheme} $\Hilb^h_{\mathbf{a}}(\A^r)$ that
is a projective fine moduli space for $\mathbf{a}$-homogeneous ideals
with multigraded Hilbert function $h$. For each $d\in\Z_{\ge0}^b,$
there is a short exact sequence of vector bundles on
$\Hilb^h_{\mathbf{a}}(\A^r)$:
\begin{align}\label{eq:TautSeqD}
  0\to\mathcal{I}_d\to\mathcal{R}_d\to\mathcal{Q}_d\to0,
\end{align}
where $\mathcal{I}_d$ is the universal ideal sheaf, $\mathcal{R}_d$
denotes the trivial sheaf with fiber $R_d$, and $\mathcal{Q}_d$ is the
rank-$h(d)$ universal quotient sheaf.
\begin{ex}\label{ex:FiniteEmbed}
  An important special case is when $I$ has finite colength,
  i.e. $\dim_{\mathbbm{k}}(R/I)=\sum_{d\in\Z_{\ge0}^b}h(d)<\infty$. In
  this case $V(I)$ has finite length, and there is a natural embedding
  $\Hilb^h_{\mathbf{a}}(\A^r)\into(\A^r)^{\left[\sum_dh(d)\right]}$
  into the Hilbert scheme of points in $\A^r$.
\end{ex}
\begin{ex}
  When $r=2$, $\Hilb^h_{\mathbf{a}}(\A^2)$ is smooth, irreducible, and
  rational \cite{MaclaganSmith2010}, see also
  \cite{Iarrobino1972,Evain2004}.
\end{ex}
\begin{ex}
  If $b=1$ and $\sum_dh(d)$ is \textit{not} finite, then
  $\Hilb^h_{(a_1,\ldots,a_r)}(\A^r)$ has a natural map to a Hilbert
  scheme of subschemes of the weighted projective space
  $\P(a_1,\ldots,a_r),$ cut out by the same ideal.
  This map need not be an embedding, essentially due to the fact that
  $I\in\Hilb^h_{\mathbf{a}}(\A^r)$ could have $(x_1,\ldots,x_r)$ as an
  embedded prime.
\end{ex}
\subsection{Tropicalizing ideals}\label{sec:Trop}
Tropical geometry usually takes place over a valued field, but in this
paper we will always assume $\mathbbm{k}$ is trivially valued. We
present the definitions we need only in this simpler context; see
\cite{MaclaganRincon2018} for the general definitions.

First we briefly recall the basics of matroid theory. See
\cite{Oxley2006} for details, including how to reconcile the following
definition with the allusion in the introduction.
\begin{Def}\label{Def:Matroid}
  A \textit{matroid} $M=(E,r)$ is the data of a finite set $E$, called
  the \textit{groundset}, together with a function $r:2^E\to\Z_{\ge0}$
  (where $2^E$ is the power set of $E$) called the \textit{rank
    function}, such that:
  \begin{enumerate}
  \item $r(\emptyset)=0,$
  \item For all subsets $S,S'\subseteq E,$
    $r(S\cup S')+r(S\cap S')\le r(S)+r(S')$, and
  \item For every subset $S\subseteq E$ and every element
    $x\in E\setminus S,$ $r(S)\le r(S\cup\{x\})\le r(S)+1.$
  \end{enumerate}
  The \textit{rank} of $M$ is $r(E).$ A subset $S\subseteq E$ is
  called \textit{dependent} if $r(S)<\abs{S}$, and
  \textit{independent} otherwise. A maximal independent subset is
  called a \textit{basis}, and all bases have cardinality $r(E)$. A
  minimal dependent subset is called a \textit{circuit}, and a union
  of circuits is called a \textit{cycle}. A 1-element circuit is
  called a \textit{loop}, and an element of $E$ not contained in any
  dependent sets is a \textit{coloop}. The \textit{corank function}
  is $r^*(S)=\abs{S}-r(S).$ A subspace $V\subseteq\mathbbm{k}^E$ gives
  rise to a matroid $\Trop(V)$ with groundset $E$ called its
  \textit{tropicalization}, with rank function
  $r(S)=\dim(\mathbbm{k}^S/V\cap\mathbbm{k}^S)$ for $S\subseteq
  E$. (Note that this is dual to some definitions in the literature.)
\end{Def}
\begin{ex}\label{ex:UniformMatroid}
  If $\mathbbm{k}$ is algebraically closed, the tropicalization of a
  generic dimension-$k$ subspace $V\in\Gr(k,\mathbbm{k}^E)$ is the
  \textit{uniform matroid} $U_{k,E}$, defined by the rank function
  \begin{align*}
    r(S)=
    \begin{cases}
      \abs{S}&\abs{S}\le k\\
      k&\abs{S}\ge k.
    \end{cases}
  \end{align*}
\end{ex}
We will use the following two standard facts.
\begin{lem}\label{lem:UnionOfCircuits}
  Let $V\subseteq\mathbbm{k}^E$ be a subspace.
  \begin{itemize}
  \item If $S\subseteq E$ is a circuit in $\Trop(V)$, then there
    exists $v=(v_e)_{e\in E}\in V$ such that $S=\{e\in E:v_e\ne0\}.$
  \item For any $v=(v_e)_{e\in E}\in V$, the set $S=\{e\in E:v_e\ne0\}$ is
    a cycle in $\Trop(V).$
  \end{itemize}
(Over an
  infinite field, the converse of the second statement holds.)
\end{lem}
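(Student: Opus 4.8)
The plan is to argue directly from the rank-function description of $\Trop(V)$. Recall that $r(S)=\abs{S}-\dim(V\cap\mathbbm{k}^S)$, so that $S\subseteq E$ is dependent in $\Trop(V)$ precisely when $V\cap\mathbbm{k}^S\ne0$; consequently a circuit is an inclusion-minimal subset $S$ for which $V$ contains a nonzero vector supported in $S$, and every proper subset of a circuit is independent. With this dictionary the first bullet is immediate: given a circuit $S$, pick any nonzero $v\in V\cap\mathbbm{k}^S$. Then $\supp(v)\subseteq S$, and if this containment were proper, $\supp(v)$ would be a dependent set (it contains $v\ne0$) properly contained in the circuit $S$ --- a contradiction. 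Hence $\supp(v)=S$.

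For the second bullet, set $S=\supp(v)$; the case $v=0$ (so $S=\emptyset$, the empty union of circuits) is trivial, so assume $v\ne0$ and fix $e\in S$. It suffices to produce a circuit $C$ with $e\in C\subseteq S$. I would consider the family of all $w\in V$ with $\supp(w)\subseteq S$ and $w_e\ne0$, which is nonempty because $v$ belongs to it, and pick a member $w$ of the family whose support $C:=\supp(w)$ is minimal. Then $C$ is dependent (it contains $w\ne0$); if it were not a circuit there would be a nonzero $u\in V$ with $\supp(u)\subsetneq C$. If $e\in\supp(u)$, then $u$ is already in the family with strictly smaller support, contradicting minimality. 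If $e\notin\supp(u)$, choose $f\in\supp(u)$ (so $f\in C$ and $w_f\ne 0$, $u_f\ne 0$) and replace $w$ by $w':=w-(w_f/u_f)\,u\in V$: one checks $w'_f=0$, $w'_e=w_e\ne0$ since $u_e=0$, and $\supp(w')\subseteq C\setminus\{f\}$, so $w'$ is a nonzero member of the family with strictly smaller support --- again a contradiction. Thus $C$ is a circuit contained in $S$ and containing $e$; as $e$ was arbitrary, $S$ is a union of circuits, i.e.\ a cycle.

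For the parenthetical converse over an infinite field, I would write a given cycle as $S=C_1\cup\cdots\cup C_m$ with each $C_i$ a circuit, use the first bullet to choose $v_i\in V$ with $\supp(v_i)=C_i$, and form $v=\sum_i a_iv_i$ with scalars $a_i$ to be chosen. For $e\notin S$ every $v_i$ vanishes at $e$, so $v_e=0$ identically; for $e\in S$ the coordinate $v_e$ is a linear form in the $a_i$ with at least one nonzero coefficient (any $i$ with $e\in C_i$ contributes $(v_i)_e\ne0$), hence its vanishing locus is a proper subspace of $\mathbbm{k}^m$. Since an affine space over an infinite field is not a finite union of proper subspaces, some $(a_i)$ avoids all of these finitely many loci, giving $v\in V$ with $\supp(v)=S$.

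The one genuinely delicate point is the minimality argument in the second bullet, specifically the case where the smaller-support vector $u$ does not involve $e$: there one must perform a Gaussian-elimination-style cancellation $w\mapsto w-(w_f/u_f)u$ to manufacture a strictly smaller competitor while preserving nonvanishing at $e$, and one must track throughout that every vector produced still has support inside $S$. (An alternative route to the second bullet would be to invoke the matroid duality $\Trop(V)^*=\Trop(V^{\perp})$ together with the characterization of cycles as complements of flats of the dual, but establishing that duality is at least as much work as the direct argument above.) Everything else is bookkeeping with the rank-function definition and the elementary observation that proper subsets of circuits are independent.
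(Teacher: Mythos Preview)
Your proof is correct. The paper itself does not prove this lemma at all --- it is introduced with ``We will use the following two standard facts'' and then stated without proof --- so there is nothing to compare against; your direct argument via the rank-function identity $r(S)=\abs{S}-\dim(V\cap\mathbbm{k}^S)$, the minimality/Gaussian-elimination step for the second bullet, and the finite-union-of-hyperplanes argument for the infinite-field converse are all sound and constitute a complete proof where the paper gives none.
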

Now we introduce our main objects of study.
\begin{Def}\label{Def:TropicalIdeal}
  Let $\mathbf{a}=(\vec a_1,\ldots,\vec a_r)$ be a positive
  multigrading on $k[x_1,\ldots,x_r].$ Let $\Mon_d(\mathbf{a})$ denote
  the set of monomials of degree $d$ with respect to the grading
  $\mathbf{a}$. A \textit{tropical (homogeneous) ideal
    $\mathscr{M}=(\mathscr{M}_d)_{d\in\Z_{\ge0}^b}$ with respect to
    the grading $\mathbf{a}$} (over the Boolean semifield) is the data
  of, for each $d\in\Z_{\ge0}^b,$ a matroid
  $\mathscr{M}_d=(\Mon_d(\mathbf{a}),r_d),$ such that
    for any circuit $S$ of $\mathscr{M}_d,$ and any monomial
    $m'\in\Mon_{d'}(\mathbf{a})$, $m'S$ is a cycle in
    $\mathscr{M}_{d+d'}(\mathbf{a})$.
    The \textit{multigraded Hilbert function} of a tropical
    homogeneous ideal $\mathscr{M}$ is the function
    $d\mapsto r_d(\Mon_d(\mathbf{a})).$
\end{Def}
Just as a subspace of $\mathbbm{k}^n$ gives rise to a matroid (a
``tropical linear space over the Boolean semifield''), a homogeneous
ideal with respect to the grading $\mathbf{a}$ gives rise to a
tropical homogeneous ideal with grading $\mathbf{a}$:
\begin{Def}\label{Def:Tropicalization}
  Let $I\subseteq\mathbbm{k}[x_1,\ldots,x_r]$ be
  $\mathbf{a}$-homogeneous. The \textit{tropicalization} of $I$ is
  $\Trop(I)=(\Trop(I)_d)_{d\in\Z_{\ge0}^b}$, where
  $\Trop(I)_d=\Trop(I_d)$.
\end{Def}
Observe that Lemma \ref{lem:UnionOfCircuits} implies that $\Trop(I)$ satisfies the
condition in Definition \ref{Def:TropicalIdeal}, and that the
multigraded Hilbert functions of $I$ and $\Trop(I)$ agree by
definition.

\subsection{Pictures of tropical ideals}\label{sec:Pictures}
When $r=2,$ we visualize a tropical ideal $\mathscr{M}$ as follows. We
draw a grid whose boxes representing monomials in two variables $x$
and $y$, where the bottom-leftmost square represents the monomial
1. We draw each circuit of $\mathscr{M}$ as a line segment connecting
a collection of dots in the grid; these dots correspond to monomials
in the circuit. We also label each degree $d$ by the Hilbert function
of $\mathscr{M}$, evaluated at $d$. (For simplicity, all examples
shown have the standard grading $\mathbf{a}=(1,1)$.)

To avoid clutter, we may omit a circuit $S$ of $\mathscr{M}_d$ if we
deem it ``uninformative,'' i.e. if $S$ is ``forced'' to be dependent
by the existence of a circuit in lower degree. Precisely, from now on
we omit a circuit $S$ in degree $d$ if there exists a circuit $S'$ in
degree $d'<d$ and a collection $T$ of degree-$(d-d')$ monomials such
that $S\subseteq\bigcup_{m\in T}mS'$ and
$\abs{S}>\abs{\bigcup_{m\in T}mS'}-\abs{T}.$ In this case, $S$ must be
dependent, as follows.

Consider the ordering $\preceq$ on $\Mon_d(\mathbf{a})$ by increasing
$y$-exponent. By Definition \ref{Def:TropicalIdeal}, each set $mS'$ is
a cycle. For each $m\in T$, select a circuit of $mS'$ that contains
the $\preceq$-minimal element of $mS'$. Then a circuit elimination
argument, exactly analogous to matrix row-reduction, shows that the
set $\bigcup_{m\in T}mS'$ has corank at least $\abs{T}$.
\begin{ex}\label{ex:OmitCircuits}
  The ideal $I=(x^3+x^2y+2xy^2+3y^3,x^5,xy^4)$ has tropicalization
  pictured in Figure \ref{fig:pictures}, where in the left image all
  circuits are drawn, and in the right image uninformative circuits
  are omitted.
  \begin{figure}[h]
    \centering
    \includegraphics[height=2in]{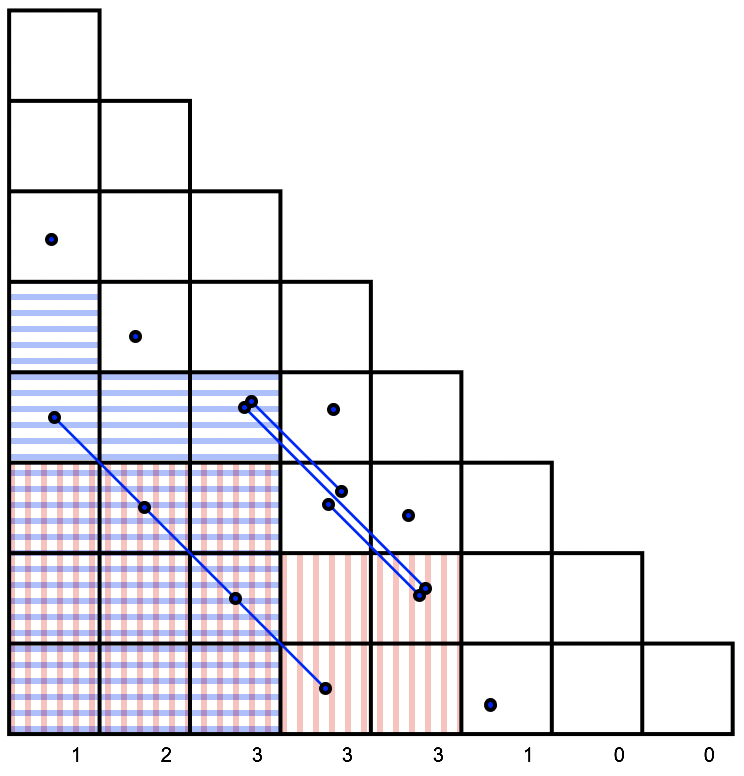}\quad\quad\quad\quad\includegraphics[height=2in]{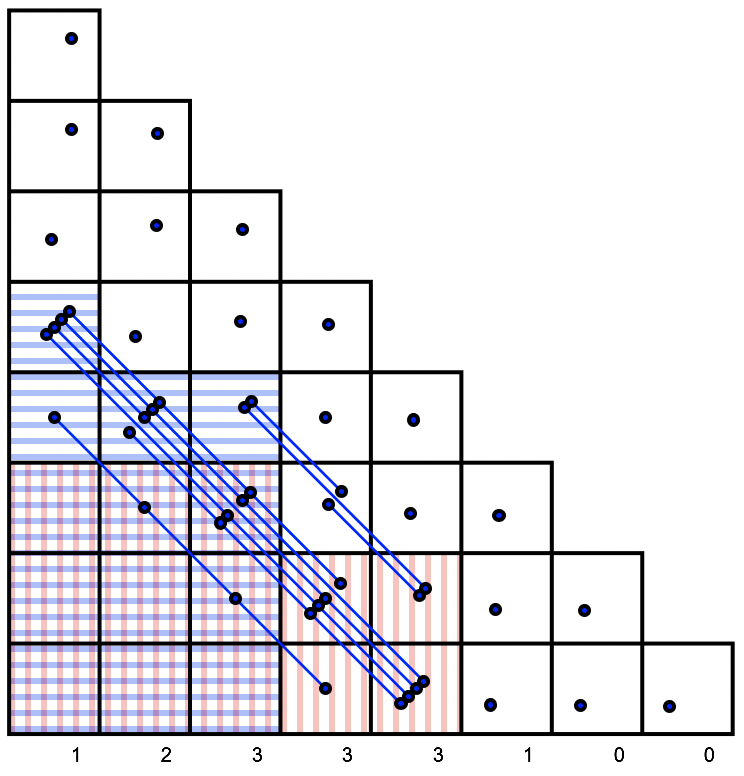}
    \caption{Two pictures of
      $\Trop(x^3+x^2y+2xy^2+3y^3,x^5,xy^4)$. See Notation
      \ref{not:ColorCode} for an explanation of the colors.}
    \label{fig:pictures}
  \end{figure}
\end{ex}

\subsection{Dependence loci and the matroid stratification} The
operation of tropicalization defines a stratification of any
multigraded Hilbert scheme $\Hilb_{\mathbf{a}}^h(\A^r)$, as
follows. Fix $d\in\Z_{\ge0}^b$ and $U\subseteq\Mon_d(\mathbf{a})$,
with $\ell:=\abs{U}$. We give a scheme-theoretic restatement of the
condition on $I\in\Hilb_{\mathbf{a}}^h(\A^r)$ that $U$ be dependent in
$\Trop(I)_d$. Consider the tautological sequence \eqref{eq:TautSeqD}
on $\Hilb_{\mathbf{a}}^h(\A^r).$ The collection $U$ defines, up to
sign, an element of $\bigwedge^\ell R_d.$ The wedge power of the map
$R_d\to\mathcal{Q}_d$ gives a global section $\sigma_U$ of
$\bigwedge^\ell\mathcal{Q}_d.$ This section vanishes at $I$ if and
only if the monomials in $U$ are linearly dependent modulo $I_d$,
i.e. if and only if $U$ is a dependent set in $\Trop(I_d).$ Thus we
define:
\begin{Def}
  The \textit{dependence scheme} of $U$ is 
  $$\mathcal{D}(U):=V(\sigma_U)\subseteq\Hilb_{\mathbf{a}}^h(\A^r).$$
\end{Def}
It is immediate that dependence schemes are closed subschemes. Since
matroids are uniquely defined by their dependent sets, we define:
\begin{Def}
  Let $\mathscr{M}$ be a tropical ideal. The \textit{matroid stratum}
  $\mathcal{S}(\mathscr{M})\subseteq\Hilb_{\mathbf{a}}^h(\A^r)$ of
  $\mathscr{M}$ is the locally closed subscheme
  \begin{align*}
    \bigcap_{d\in\Z_{\ge0}^b}\left(\bigcap_{\text{$U$ dependent in $\mathscr{M}_d$}}\mathcal{D}(U)\cap\bigcap_{\text{$U$ independent in $\mathscr{M}_d$}}\mathcal{D}(U)^C\right).
  \end{align*}
\end{Def}
Note that each stratum involves an infinite intersection of
Zariski-open sets, and therefore $\mathcal{S}(\mathscr{M})$ may not be
Zariski-locally closed. Indeed we will see in Section \ref{sec:P1}
that this does occur! However, if $\sum_{d\in\Z_{\ge0}^b}h(d)<\infty,$
then there \textit{are} no independent sets in sufficiently large
degree --- this implies there are finitely many strata in the matroid
stratification of $\Hilb^h_{\mathbf{a}}(\A^r)$, and they are
Zariski-locally closed.

\begin{rem}
  The number of strata in the matroid stratification of
  $\Hilb_{\mathbf{a}}^h(\A^r)$ is countable, as follows. A stratum
  $\mathcal{S}(\mathscr{M})$ is determined by the collection of sets
  $U$ such that $\mathcal{D}(U)\supseteq\mathcal{S}(\mathscr{M});$ in
  particular, $\mathcal{S}(\mathscr{M})$ is the unique stratum whose
  Zariski closure is
  $\bigcap_{d}\bigcap_{\substack{U\subseteq\Mon_d(\mathbf{a})\\\mathcal{D}(U)\supseteq\mathcal{S}(\mathscr{M})}}\mathcal{D}(U).$
  As $\Hilb_{\mathbf{a}}^h(\A^r)$ is Noetherian, any such intersection
  is actually finite; this defines an injective function from the set
  of matroid strata in $\Hilb_{\mathbf{a}}^h(\A^r)$ into the set of
  finite intersections of the countable collection
  $\{\mathcal{D}(U)\}_U$ of subsets.

  Note, however, that this argument does not imply that the number of
  tropical \textit{ideals} with fixed grading and Hilbert function is
  countable; indeed, we do not know whether this is the case.
\end{rem}

\begin{ex}\label{ex:Necklace1}
  We here introduce a simple, but surprising, example of a dependence
  locus, which we will return to repeatedly. Assume
  $\mathbbm{k}=\C$. Let $r=2$, $b=1,$ and $\mathbf{a}=(1,1)$, and
  suppose there exists $k\ge1$ such that
  \begin{align}\label{eq:HilbPrin}
    h(d)=
    \begin{cases}
      d+1&d<k\\
      k&d\ge k.
    \end{cases}
  \end{align}
  The corresponding Hilbert scheme is the moduli space of
  \textbf{principal} homogeneous ideals in $\mathbbm{k}[x,y]$
  generated in degree $k$, i.e. the Hilbert scheme $(\P^1)^{[k]}$ of
  length-$k$ subschemes of $\P^1.$ Fix $d\ge k,$ and let
  $U=\{x^d,y^d\}.$ We classify $\mathcal{D}(U)\subseteq(\P^1)^{[k]}.$

  Suppose $I=(f)\in\mathcal{D}(U)\subseteq(\P^1)^{[k]}$. Then $(f)$
  contains a polynomial of the form $c_1x^d+c_2y^d,$ i.e. there exists
  a degree-$(d-k)$ polynomial $p$ such that $pf=c_1x^d+c_2y^d.$ Note
  that $V(pf)$ consists of the $d$ points
  $\{[\zeta:1]:\zeta^d=c_2/c_1\}$ (as long as $c_1\ne0$). These are
  the vertices of a regular $d$-gon in $\C$ centered at the
  origin. Since $V(f)$ is a length-$k$ subscheme of $V(pf)$, $V(f)$
  consists of $k$ of the $d$ vertices.

  Conversely, given a collection of $k$ points $z_1,\ldots,z_k\in\C$
  that are distinct vertices of some regular $d$-gon centered at 0,
  the defining polynomial $f$ of $\{z_1,\ldots,z_k\}$ satisfies
  $(f)\in\mathcal{D}(U)$ (simply by letting $p$ be the defining
  polynomial of the other $d-k$ vertices).

  To visualize $\mathcal{D}(U)$ further, consider the $\C^*$-action on
  regular $d$-gons centered at 0. This defines an action on
  $\mathcal{D}(U),$ and the collection of ratios
  $z_2/z_1,\ldots,z_k/z_1$ defines an orbit; this collection is
  equivalent to the data of a binary necklace with $k$ black and $d-k$
  white beads. Let $N_{d,k}$ denote the set of such necklaces. Then
  $\mathcal{D}(U)$ is a union of rational curves indexed by $N_{d,k}$,
  all of which intersect at the two points $(x^k)$ (where the $d$-gon
  is scaled down to 0) and $(y^k)$ (where the $d$-gon is scaled out to
  $\infty$).
\end{ex}

In a rank-$k$ matroid $(E,r)$, a set $S\subseteq E$ with
$\abs{S}\le k$ is dependent if and only if $S'$ is dependent for every
$S'\supseteq S$ with $\abs{S'}=k$. (This follows from the fact that
all bases have cardinality $k$.) We have the following
scheme-theoretic version of this fact, which we will use in Section
\ref{sec:P1}:
\begin{prop}\label{prop:IntersectDependenceLoci}
  Fix a graded Hilbert function $h$. Let
  $U\subseteq\Mon_d(\mathbf{a})$ with $\abs{U}\le h(d)$.  The
  dependence scheme
  $\mathcal{D}(U)\subseteq\Hilb_{\mathbf{a}}^h(\A^r)$ satisfies
  $$\mathcal{D}(U)=\bigcap_{\substack{W\supseteq
      U\\\abs{W}=h(d)}}\mathcal{D}(W).$$ (Note: This also holds if
  $\abs{U}>h(d),$ in which case both sides are equal to
  $\Hilb_{\mathbf{a}}^h(\A^r)$.)
\end{prop}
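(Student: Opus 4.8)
The plan is to reduce the statement to an identity between ideals of minors of a matrix, and then to prove that identity by exploiting the surjectivity of the universal quotient map $\mathcal{R}_d\to\mathcal{Q}_d$ from \eqref{eq:TautSeqD}.

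\textit{Reduction to a local statement.} Since we are comparing two closed subschemes of $\Hilb_{\mathbf{a}}^h(\A^r)$, it suffices to work over an affine open $\Spec B$ on which $\mathcal{Q}_d$ is trivial. Choosing a trivialization $\mathcal{Q}_d|_{\Spec B}\cong\mathcal{O}^{\oplus k}$ with $k=h(d)$, the map $\mathcal{R}_d\to\mathcal{Q}_d$ becomes a surjection $q\colon B^{N}\onto B^{k}$ (with $N=\abs{\Mon_d(\mathbf{a})}$), represented by a $k\times N$ matrix $A$ whose columns are indexed by the degree-$d$ monomials. Unwinding the definition of $\sigma_U$ and of the vanishing scheme of a section of a vector bundle, one checks that $\mathcal{D}(U)\cap\Spec B=V(I_{\ell}(A_U))$, where $\ell=\abs{U}$, $A_U$ is the submatrix on the columns in $U$, and $I_\ell(\,\cdot\,)$ is the ideal generated by the $\ell\times\ell$ minors; when $\abs{W}=k$ the matrix $A_W$ is square and $\mathcal{D}(W)\cap\Spec B=V(\det A_W)$. (By Cauchy--Binet these ideals are independent of the trivialization, so they glue.) Thus Proposition~\ref{prop:IntersectDependenceLoci} is equivalent to the ring-theoretic identity
\[
  I_\ell(A_U)=\sum_{\substack{W\supseteq U\\\abs{W}=k}}\bigl(\det A_W\bigr)\subseteq B,
\]
valid for every \textit{surjective} $A$ and every $U$ with $\ell=\abs{U}\le k$.

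\textit{The two inclusions.} The inclusion ``$\supseteq$'' — corresponding to $\mathcal{D}(U)\subseteq\bigcap_W\mathcal{D}(W)$, the matroid-theoretically obvious direction — follows at once from the generalized Laplace expansion of each $\det A_W$ along the columns in $U$. The reverse inclusion is the heart of the matter, and it genuinely uses surjectivity: already for a matrix with a zero column it fails. Since $q$ is surjective with free target, $A$ has a right inverse, i.e.\ there is an $N\times k$ matrix $C$ over $B$ with $AC=I_k$. I would then establish the single-step identity
\[
  I_\ell(A_U)=\sum_{v\in\Mon_d(\mathbf{a})}I_{\ell+1}\bigl(A_{U\cup\{v\}}\bigr)\qquad(\ell<k),
\]
whose ``$\supseteq$'' is again Laplace expansion; for ``$\subseteq$'', fix an $\ell\times\ell$ minor $\det A_{T,U}$ (rows $T$, columns $U$), pick any row index $i_0\notin T$, expand $\det A_{T\cup\{i_0\},\,U\cup\{v\}}$ along its $v$-th column, multiply by $C_{v,i_0}$, sum over $v$, interchange the sums, and use $(AC)_{i,i_0}=\delta_{i,i_0}$; all terms collapse and one gets $\sum_v C_{v,i_0}\det A_{T\cup\{i_0\},\,U\cup\{v\}}=\pm\det A_{T,U}$, so $\det A_{T,U}$ lies in the right-hand side. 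An induction on $k-\ell$ using this single-step identity then yields the displayed identity (note $\ell<k$ ensures every $W\supseteq U$ with $\abs{W}=k$ contains some $v\notin U$), hence the proposition. The parenthetical case $\abs{U}>h(d)$ is immediate: then $\bigwedge^{\abs{U}}\mathcal{Q}_d=0$ forces $\sigma_U=0$, and no $W\supseteq U$ has $\abs{W}=h(d)$, so both sides are all of $\Hilb_{\mathbf{a}}^h(\A^r)$.

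\textit{Main obstacle.} Everything except the single-step identity is formal, so the crux is that identity — recognizing that it rests on surjectivity of the universal quotient (not merely local freeness of $\mathcal{Q}_d$) and carrying out the column-expansion computation with the signs under control. An alternative, more invariant route to the same local identity: since $q$ has a right inverse, so does $\bigwedge^{k-\ell}q$, hence $\bigwedge^{k-\ell}\mathcal{Q}_d$ is globally generated by the sections $\sigma_{U'}$ with $\abs{U'}=k-\ell$; pairing against these via $\bigwedge^{\ell}\mathcal{Q}_d\otimes\bigwedge^{k-\ell}\mathcal{Q}_d\to\det\mathcal{Q}_d$ and using $\sigma_U\wedge\sigma_{U'}=\pm\sigma_{U\cup U'}$ identifies the ideal of $V(\sigma_U)$, locally, with $(\det A_W:W\supseteq U,\ \abs{W}=k)$.
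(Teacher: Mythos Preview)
Your proof is correct. The paper's own argument is essentially the ``alternative, more invariant route'' you sketch at the end: it wedges $\sigma_U$ against all $\sigma_{U'}$ with $\abs{U'}=h(d)-\abs{U}$ via the map $w:\bigwedge^{\abs{U}}\mathcal{Q}_d\to\bigoplus_{U'}\bigwedge^{h(d)}\mathcal{Q}_d$, observes that $w$ is (locally split) injective because the pairing $\bigwedge^{\abs{U}}\mathcal{Q}_d\otimes\bigwedge^{h(d)-\abs{U}}\mathcal{Q}_d\to\bigwedge^{h(d)}\mathcal{Q}_d$ is perfect and the $\sigma_{U'}$ generate $\bigwedge^{h(d)-\abs{U}}\mathcal{Q}_d$ (this is where surjectivity of $\mathcal{R}_d\to\mathcal{Q}_d$ enters, somewhat implicitly), and then notes that the components of $w(\sigma_U)$ with $U'\cap U\ne\emptyset$ vanish automatically.

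Your main argument takes a different, more concrete path: after trivializing locally, you prove the ideal identity $I_\ell(A_U)=\sum_W(\det A_W)$ by a one-step enlargement $I_\ell(A_U)=\sum_v I_{\ell+1}(A_{U\cup\{v\}})$ and induction, extracting each $\ell\times\ell$ minor explicitly using the right inverse $C$ and column expansion. This is more elementary and makes the role of surjectivity completely transparent (via $AC=I_k$), at the cost of some index bookkeeping; the paper's version is shorter and coordinate-free but hides the surjectivity input inside the phrase ``nondegenerate pairing.'' One small remark: in your single-step identity the terms with $v\in U$ should be read as contributing zero (repeated column), which you implicitly use but might state.
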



\begin{proof}
  Consider the sequence of maps
  \begin{align*}
    \bigwedge^{\abs{U}}\Span(U)&\xrightarrow{\iota}{}\bigwedge^{\abs{U}}\mathcal{Q}_d\xrightarrow{w}{}\bigoplus_{U'\in\binom{\Mon_d}{h(d)-\abs{U}}}\bigwedge^{h(d)}\mathcal{Q}_d\xrightarrow{p}{}\bigoplus_{\substack{U'\in\binom{\Mon_d}{{h(d)}-\abs{U}}\\U'\cap
    U\ne\emptyset}}\bigwedge^{h(d)}\mathcal{Q}_d,
  \end{align*}
  where $\iota$ is the inclusion, $p$ is the projection, and
  $w(\alpha)=\alpha\wedge\bigwedge_{u\in U'}u$. Note that $w$ is
  injective, as it is induced by the nondegenerate pairing
  $\bigwedge^{\abs{U}}\mathcal{Q}_d\otimes\bigwedge^{{h(d)}-\abs{U}}\mathcal{Q}_d\to\mathbbm{k}$. Thus
  $V(w(\sigma_U))=V(\sigma_U).$

  Also, $p\circ w\circ\iota$ is zero, so $w\circ\iota$ factors through
  $\ker(p)=\bigoplus_{\substack{U'\in\binom{\Mon_d}{{h(d)}-\abs{U}}\\U'\cap
      U=\emptyset}}\bigwedge^{h(d)}\mathcal{Q}_d.$ Thus
  \begin{align*}
    \mathcal{D}(U)=V(\sigma_U)=V(w(\sigma_U))&=\bigcap_{\substack{U'\in\binom{\Mon_d}{{h(d)}-\abs{U}}\\U'\cap
    U=\emptyset}}V(\sigma_{U\cup
    U'})=\bigcap_{\substack{U'\in\binom{\Mon_d}{{h(d)}-\abs{U}}\\U'\cap
    U=\emptyset}}\mathcal{D}(U'\cup U).\qedhere
  \end{align*}
\end{proof}
The matroid stratification of $\Hilb_{\mathbf{a}}^h(\A^r)$ satisfies
the following straightforward recursivity relation, which implies that
when studying strata, we may ignore ideals of the form $I=mI'$, where
$m$ is a monomial. There are natural inclusions
$\iota_1,\ldots,\iota_r$ between $\mathbf{a}$-multigraded Hilbert
schemes, defined by $\iota_i(I)=x_iI.$
\begin{prop}\label{prop:StrataPullback}
  Let $U\subseteq\Mon_d(\mathbf{a})$. Then
  $$\iota_i^{-1}(\mathcal{D}(U))=\mathcal{D}\left(\frac{1}{x_i}(U\setminus\{m\in
    U:x_i\nmid m\})\right).$$
\end{prop}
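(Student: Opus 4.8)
The plan is to unwind the definition of the dependence scheme $\mathcal{D}(U) = V(\sigma_U)$ under the inclusion $\iota_i \colon \Hilb_{\mathbf{a}}^{h'}(\A^r) \to \Hilb_{\mathbf{a}}^{h}(\A^r)$, $I \mapsto x_i I$. Write $U = U_0 \sqcup U_1$, where $U_1 = \{m \in U : x_i \mid m\}$ and $U_0 = \{m \in U : x_i \nmid m\}$, and set $V := \frac{1}{x_i} U_1 \subseteq \Mon_{d - \vec a_i}(\mathbf{a})$, the set appearing on the right-hand side. Fix a point $I' \in \Hilb_{\mathbf{a}}^{h'}(\A^r)$ and let $I = x_i I' = \iota_i(I')$. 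The key observation is that for $f \in R_d$, one has $f \in I_d$ if and only if $f = x_i g$ for some $g \in I'_{d - \vec a_i}$; in particular every polynomial in $I_d$ is divisible by $x_i$, so its support consists entirely of monomials divisible by $x_i$. Consequently, in the matroid $\Trop(I_d)$, each monomial in $U_0$ is a loop (it lies in no circuit, equivalently $\{m\}$ is dependent for $m \in U_0$ since $m \notin I_d$ — wait, that's backwards; rather: no element of $I_d$ has a monomial from $U_0$ in its support, so every subset of $\Mon_d(\mathbf{a})$ containing a monomial of $U_0$ whose other members span a subset of $x_i \cdot \Mon$ still sees those extra monomials contribute freely). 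More precisely: a set $S \subseteq \Mon_d(\mathbf{a})$ is dependent in $\Trop(I_d)$ iff the monomials of $S \cap (x_i \cdot \Mon)$ are dependent modulo $I_d$, iff $\frac{1}{x_i}(S \cap (x_i\cdot\Mon))$ is dependent modulo $I'_{d - \vec a_i}$, iff that set is dependent in $\Trop(I'_{d-\vec a_i})$.

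The main step, then, is to verify this dictionary at the \emph{scheme-theoretic} level rather than just pointwise, since $\mathcal{D}(U)$ is defined as the vanishing of a section $\sigma_U$ of $\bigwedge^{|U|}\mathcal{Q}_d$. I would do this by comparing the tautological sequences \eqref{eq:TautSeqD} on the two Hilbert schemes. Pulling back the universal ideal sheaf $\mathcal{I}_d$ on $\Hilb_{\mathbf{a}}^h$ along $\iota_i$, and using that $(x_i I')_d = x_i \cdot I'_{d - \vec a_i}$ compatibly in families, one gets $\iota_i^* \mathcal{I}_d \cong \mathcal{I}'_{d - \vec a_i}$ (via multiplication by $x_i$, which identifies $R_{d-\vec a_i}$ with the subspace $x_i R_{d - \vec a_i} \subseteq R_d$). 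Hence on the quotient side, $\iota_i^* \mathcal{Q}_d$ splits compatibly as $\mathcal{Q}'_{d - \vec a_i} \oplus (\text{trivial bundle with fiber } R_d / x_i R_{d - \vec a_i})$, where the second summand is spanned by the images of the monomials \emph{not} divisible by $x_i$. Under this splitting, the pullback of $\sigma_U = \bigwedge_{m \in U}[m]$ decomposes along the wedge: $\iota_i^*\sigma_U = \big(\bigwedge_{m \in U_1}[m]\big) \wedge \big(\bigwedge_{m \in U_0}[m]\big)$, where the first factor lives in $\bigwedge^{|U_1|}\mathcal{Q}'_{d-\vec a_i}$ and equals (up to sign, via the iso) $\sigma_V$, while the second factor is a \emph{fixed nonzero constant} section of $\bigwedge^{|U_0|}$ of the trivial summand (since the monomials not divisible by $x_i$ map to linearly independent elements, being part of a basis). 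Therefore $\iota_i^*\sigma_U = \pm \sigma_V \otimes (\text{nowhere-vanishing section})$, so $V(\iota_i^*\sigma_U) = V(\sigma_V)$, which is exactly $\iota_i^{-1}(\mathcal{D}(U)) = \mathcal{D}(V)$.

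The main obstacle I anticipate is the bookkeeping needed to make the splitting $\iota_i^*\mathcal{Q}_d \cong \mathcal{Q}'_{d-\vec a_i} \oplus (\text{trivial})$ genuinely canonical and compatible with the tautological sections in families — one must check that multiplication by $x_i$ defines a morphism of the tautological exact sequences over the whole Hilbert scheme, not just fiberwise, and that the monomials in $U_0$ really do give a \emph{constant} (hence invertible) contribution to the wedge. This amounts to the statement that the composite $R_{d - \vec a_i} \xrightarrow{\cdot x_i} R_d \to \mathcal{Q}_d$ has image exactly $\iota_i^*$ of the pullback of $\mathcal{I}_d$'s complement, which follows from the moduli-functor description of $\Hilb_{\mathbf{a}}^h$ and the fact that $\iota_i$ is defined functorially by $I \mapsto x_i I$. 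Once that compatibility is pinned down, the decomposition of $\sigma_U$ and the conclusion $V(\sigma_V \otimes (\text{unit})) = V(\sigma_V)$ are formal. If one prefers to avoid the scheme-theoretic subtlety entirely, an alternative is to argue set-theoretically as in the first paragraph to get equality of underlying sets, and then observe that both $\mathcal{D}(U)$ and $\iota_i^{-1}(\mathcal{D}(V))$ are cut out (via Proposition \ref{prop:IntersectDependenceLoci}, reducing to $|W| = h(d)$) by the pullbacks of the same determinantal equations, so the scheme structures agree as well; but I expect the clean statement is best obtained by the wedge-decomposition argument above.
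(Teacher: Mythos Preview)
The paper explicitly omits the proof of this proposition, deeming it straightforward, so there is no argument to compare against. Your approach is correct and supplies exactly the scheme-theoretic verification the paper elides: the identification $\iota_i^*\mathcal{Q}_d \cong \mathcal{Q}'_{d-\vec a_i}\oplus W$ (with $W$ the trivial bundle spanned by monomials not divisible by $x_i$) forces $\iota_i^*\sigma_U$ to land in the single summand $\bigwedge^{|U_1|}\mathcal{Q}'_{d-\vec a_i}\otimes\bigwedge^{|U_0|}W$ as $\sigma_V\otimes(\text{nonzero constant})$, so its vanishing scheme coincides with that of $\sigma_V$.

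One terminological wobble worth cleaning up: in your first paragraph you initially call the monomials of $U_0$ ``loops'' before backtracking. They are in fact \emph{coloops} of $\Trop((x_iI')_d)$ --- no element of $x_iI'_{d-\vec a_i}$ has support meeting $U_0$, so each such monomial lies in no circuit and contributes a free rank-one summand. This is exactly why deleting $U_0$ does not affect dependence, and stating it that way from the outset would streamline the set-theoretic paragraph. Otherwise the argument is complete; the ``obstacle'' you flag (functorial compatibility of multiplication by $x_i$ with the tautological sequences) is indeed routine, since $\iota_i$ is defined on the level of the moduli functor.
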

We omit the proof, as it is straightforward, and we will use the
Proposition only to reduce the number of strata that are of interest.

\section{The matroid stratification of $(\P^1)^{[k]}$}\label{sec:P1}

Let $k>0.$ In this section, we will describe (Theorem
\ref{thm:SchurPolynomials}) the matroid stratification of
$\Hilb_{\mathbf{a}}^h(\A^r)$ in the case $r=2,$ $b=1$,
$\mathbf{a}=(1,1),$ and let $h$ be as in \eqref{eq:HilbPrin}. We write
$R=\mathbbm{k}[x,y]$. Note that $\Hilb_{\mathbf{a}}^h(\A^r)$ is simply
the familiar Hilbert scheme of points $(\P^1)^{[k]}$.  Recall that
\begin{align*}
  (\P^1)^{[k]}\cong\Sym^k(\P^1)\cong\P^k,
\end{align*}
where $[a_0:a_1:\cdots:a_k]\in\P^k$ corresponds the principal ideal
$I=(a_0x^k+a_1x^{k-1}y+\cdots+a_ky^k)\in(\P^1)^{[k]}$, and to the set
of roots (with multiplicity) $V(I)\in\Sym^k(\P^1).$ 

We will describe the matroid stratification on $(\P^1)^{[k]}$ via
vanishing loci of sections of line bundles. For convenience, we note
that the sheaf $\O(n)$ on $\P^k$ is identified with the sheaf
$\Sym^k(\P^1)$ of $S_k$-symmetric functions in $k$ pairs of variables
$x_1,y_1,\ldots,x_k,y_k$, bihomogeneous in each pair of variables of
degree $n$. Also, the tautological line bundle $\mathcal{I}_k$ on
$(\P^1)^{[k]}$ (see \eqref{eq:TautSeqD}) is identified with the bundle
$\O(-1)$ on $\P^k$.


\subsection{The correspondence between Schur polynomials and subsets
  of monomials}
We introduce the following version of Schur polynomials, bihomogenized
in each variable.
\begin{Def}\label{Def:SymmetricPolynomials}
  Let $k\ge1.$ Let $\lambda=(\lambda_1,\ldots,\lambda_m)$ be a
  partition, in nonincreasing order, with $c\le k$ parts. We write
  $\lambda_i=0$ for $c<i\le k$. The \textit{bihomogeneous Schur
    polynomial $s_\lambda$ in $k$ variables} is defined by
  \begin{align*}
    s_\lambda(x_1,y_1,x_2,y_2,\ldots,x_k,y_k)=\frac{a_{(\lambda_1+k-1,\lambda_2+k-2,\ldots,\lambda_k+0)}(x_1,y_1,\ldots,x_k,y_k)}{a_{(k-1,k-2,\ldots,0)}(x_1,y_1,\ldots,x_k,y_k)},
  \end{align*}
  where
  \begin{align*}
    a_{(l_1,l_2,\ldots,l_k)}(x_1,y_1,x_2,y_2,\ldots,x_k,y_k)=\det(x_j^{l_i}y_j^{l_1-l_i})
  \end{align*}
  is the Vandermonde determinant.
  Similarly, the
  \textit{bihomogeneous elementary symmetric polynomials} $e_j$ 
  are defined by
  \begin{align*}
    e_j(x_1,y_1,\ldots,x_k,y_k)&=\sum_{\substack{A\subseteq[k]\\\abs{A}=j}}\prod_{i\in
    A}x_i\prod_{i\not\in A}y_i
  \end{align*}
\end{Def}
Note that $s_\lambda$ is bihomogeneous of degree $\lambda_1$ and $e_j$
is bihomogeneous of degree 1 
in each pair of variables $x_i,y_i$. To avoid confusion, we point out
that $e_0=y_1\cdots y_k.$
\begin{notation}
  Schur polynomials in $k$ variables are indexed by partitions with at
  most $k$ parts, or alternatively by Young diagrams that fit inside a
  $k\times\infty$ rectangle. Since Young diagrams also appear in this
  paper in relation to monomial ideals, we distinguish them as
  follows. We draw Young diagrams related to Schur polynomials with
  the longest row on top (English notation), as opposed the way we
  have been drawing monomial ideals (French notation).
\end{notation}
We now give a correspondence between Young diagrams and sets of
monomials.
\begin{Def}\label{Def:RimPath}
  Fix $h,k\ge1.$ Let $\lambda$ be a partition whose Young diagram fits
  inside the $k\times h$ rectangle in $\Z^2$. (That is, $\lambda$ has
  at most $k$ parts, and $\lambda_1\le h.$) The \textit{width-$h$,
    height-$k$ rim path} of $\lambda$ is the lattice path $P_\lambda^{h,k}$
  in $\Z^2$ that begins at $(h,0)$, and follows the edge of the Young
  diagram down and to the left until it reaches $(0,-k).$ We index the
  steps of $P_\lambda^{h,k}$ by $i=0,\ldots,h+k-1.$ 

  The \textit{width-$h$, height-$k$ monomial set} of $\lambda$ is the
  set
  \begin{align*}
    U_\lambda^{h,k}=\{x^{h+k-1-i}y^i\in\Mon_{h+k-1}:i\in\{0,\ldots,h+k-1\}\text{
    such that the $i$th step
    of $P_\lambda^{h,k}$ is vertical}\}.
  \end{align*}
\end{Def}
The definition is illustrated on the left in Figure
\ref{fig:AllowedSegments}. Note that $\abs{U_\lambda^{h,k}}=k.$

\begin{rem}\label{rem:Bijection}
  The operation of taking the width-$h$, height-$k$ monomial set has a
  clear inverse, hence gives
  a bijection between partitions with at most $k$ parts and
  $\lambda_1\le h$, and $k$-element subsets of $\Mon_{h+k-1}.$ Thus
  Proposition \ref{prop:IntersectDependenceLoci} implies:
\end{rem}
\begin{prop}\label{prop:IntersectDependenceLociLambda}
  For any subset $U\subseteq\Mon_{h+k-1}$,
  \begin{align*}
    \mathcal{D}(U)=\bigcap_{\lambda:U_\lambda^{h,k}\supseteq U}\mathcal{D}(U_\lambda^{h,k}).
  \end{align*}
\end{prop}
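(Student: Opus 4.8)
The plan is to deduce this immediately from Proposition \ref{prop:IntersectDependenceLoci} together with the bijection recorded in Remark \ref{rem:Bijection}. The key observation is that on $(\P^1)^{[k]}$ the graded Hilbert function satisfies $h(d)=k$ for every $d\ge k$, and in the degree $d=h+k-1$ relevant here we have $d\ge k$ because $h\ge1$; hence $h(h+k-1)=k$.

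First I would dispose of the case $\abs{U}\le k$. Applying Proposition \ref{prop:IntersectDependenceLoci} in degree $h+k-1$ gives
\[
\mathcal{D}(U)=\bigcap_{\substack{W\supseteq U\\\abs{W}=k}}\mathcal{D}(W),
\]
the intersection running over all $k$-element subsets $W\subseteq\Mon_{h+k-1}$ containing $U$. By Remark \ref{rem:Bijection}, the map $\lambda\mapsto U_\lambda^{h,k}$ is a bijection from the set of partitions with at most $k$ parts and $\lambda_1\le h$ onto the set of $k$-element subsets of $\Mon_{h+k-1}$; reindexing the displayed intersection by writing $W=U_\lambda^{h,k}$, and noting that the constraint $W\supseteq U$ translates into $U_\lambda^{h,k}\supseteq U$, yields exactly the asserted identity. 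The remaining case $\abs{U}>k$ is degenerate: then no $k$-element $W$ contains $U$, so by the parenthetical remark in Proposition \ref{prop:IntersectDependenceLoci} --- equivalently, because $\bigwedge^{\abs{U}}\mathcal{Q}_{h+k-1}=0$ once $\abs{U}$ exceeds the rank $k$ of $\mathcal{Q}_{h+k-1}$, forcing $\sigma_U=0$ --- we have $\mathcal{D}(U)=\Hilb_{\mathbf{a}}^h(\A^r)$, while the right-hand side is an intersection over the empty index set and is therefore also all of $\Hilb_{\mathbf{a}}^h(\A^r)$.

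I do not anticipate any genuine obstacle: all the substance is already packaged into Proposition \ref{prop:IntersectDependenceLoci} and into the combinatorics of Definition \ref{Def:RimPath}. The only points requiring care are checking that $h(h+k-1)=k$ so that Proposition \ref{prop:IntersectDependenceLoci} applies with $\abs{U}\le h(d)$, verifying that the rim-path construction really does hit \emph{every} $k$-subset of $\Mon_{h+k-1}$ exactly once (this is the inverse map alluded to in Remark \ref{rem:Bijection}: read off a partition from any $k$-subset by treating its monomials as the vertical steps of a lattice path from $(h,0)$ to $(0,-k)$), and keeping the $\abs{U}>k$ bookkeeping straight.
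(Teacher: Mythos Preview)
Your proposal is correct and follows exactly the paper's approach: the paper simply states (in Remark~\ref{rem:Bijection}) that the rim-path construction gives a bijection between partitions fitting in a $k\times h$ box and $k$-element subsets of $\Mon_{h+k-1}$, and then says ``Thus Proposition~\ref{prop:IntersectDependenceLoci} implies'' the result. Your write-up is more detailed than the paper's (you explicitly verify $h(h+k-1)=k$ and handle the $\abs{U}>k$ case), but the argument is identical.
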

We show how to visualize Proposition
\ref{prop:IntersectDependenceLociLambda} in an example.
\begin{ex}\label{ex:AdmissiblePartitions}
  Let $h=7$ and $k=5.$ Let $U=\{x^{11},x^6y^5,x^2y^9\}.$ If $\lambda$
  is such that $U\subseteq U_\lambda^{7,5}$, then the 0th, 5th, and
  9th steps of $P_\lambda^{7,5}$ are vertical. Concretely, this says
  precisely that the dashed red segments in Figure
  \ref{fig:AllowedSegments} are not in $P_\lambda^{7,5}.$ (This also
  disallows certain other segments from being in $P_\lambda^{7,5}$; we
  have shown these as dotted lines.) Then a partitions $\lambda$
  satisfies $U\subseteq U_\lambda^{7,5}$ if and only if
  $P_\lambda^{7,5}$ consists of solid segments. For example,
  $\lambda=(7,4,4,1)$ satisfies $U\subseteq U_\lambda^{7,5}$;
  $P_\lambda^{7,5}$ is drawn in bold in Figure
  \ref{fig:AllowedSegments}.
  
  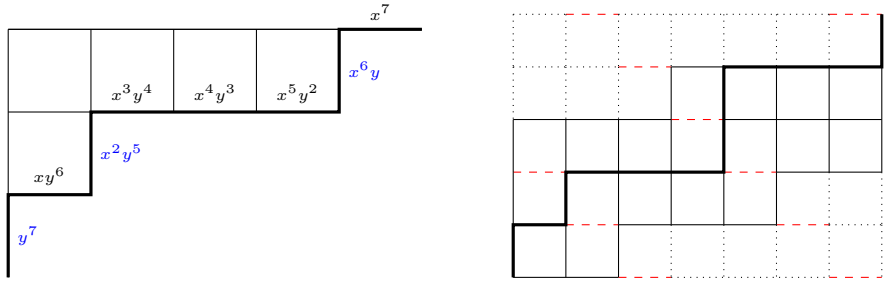
\begin{figure}[h]
    \centering
    \begin{tikzpicture}[scale=1.1]
      \draw (0,0)--(4,0);
      \draw (0,0)--(5,0);
      \draw (0,-1)--(4,-1);
      \draw (0,-2)--(1,-2);
      \draw (4,0)--(4,-1);
      \draw (3,0)--(3,-1);
      \draw (2,0)--(2,-1);
      \draw (1,0)--(1,-2);
      \draw (0,0)--(0,-3);
      \draw[very thick] 
      (0,-3)--(0,-2)--(1,-2)--(1,-1)--(4,-1)--(4,0)--(5,0);
      \draw (0.5,-2) node[above] {\tiny $xy^6$};
      \draw (1.5,-1) node[above] {\tiny $x^3y^4$};
      \draw (2.5,-1) node[above] {\tiny $x^4y^3$};
      \draw (3.5,-1) node[above] {\tiny $x^5y^2$};
      \draw (4.5,0) node[above] {\tiny $x^7$};
      \draw (4,-.5) node[right] {\tiny \textcolor{blue}{$x^6y$}};
      \draw (1,-1.5) node[right] {\tiny \textcolor{blue}{$x^2y^5$}};
      \draw (0,-2.5) node[right] {\tiny \textcolor{blue}{$y^7$}};
    \end{tikzpicture}
    \quad\quad\quad
    \begin{tikzpicture}[scale=0.7]
      \draw[dotted] (0,0)--(1,0); \draw[dashed,red] (1,0)--(2,0); \draw[dotted]
      (2,0)--(6,0); \draw[dashed,red] (6,0)--(7,0); \draw[dotted]
      (0,-1)--(2,-1); \draw[dashed,red] (2,-1)--(3,-1); \draw
      (3,-1)--(7,-1); \draw (0,-2)--(3,-2); \draw[dashed,red]
      (3,-2)--(4,-2); \draw (4,-2)--(7,-2); \draw[dashed,red]
      (0,-3)--(1,-3); \draw (1,-3)--(4,-3); \draw[dashed,red]
      (4,-3)--(5,-3); \draw (5,-3)--(7,-3); \draw (0,-4)--(1,-4);
      \draw[dashed,red] (1,-4)--(2,-4); \draw (2,-4)--(5,-4);
      \draw[dashed,red] (5,-4)--(6,-4); \draw[dotted] (6,-4)--(7,-4); \draw
      (0,-5)--(2,-5); \draw[dashed,red] (2,-5)--(3,-5); \draw[dotted]
      (3,-5)--(6,-5); \draw[dashed,red] (6,-5)--(7,-5);
      \foreach \i in {0,...,6} {
        \draw[dotted] (\i,0)--(\i,-1);
      }
      \foreach \i in {0,...,2} {
        \draw[dotted] (\i,-1)--(\i,-2);
      }
      \foreach \i in {6,...,7} {
        \draw[dotted] (\i,-3)--(\i,-4);
      }
      \foreach \i in {3,...,7} {
        \draw[dotted] (\i,-4)--(\i,-5);
      }
      \foreach \i in {7,...,7} {
        \draw (\i,0)--(\i,-1);
      }
      \foreach \i in {3,...,7} {
        \draw (\i,-1)--(\i,-2);
      }
      \foreach \i in {0,...,7} {
        \draw (\i,-2)--(\i,-3);
      }
      \foreach \i in {0,...,5} {
        \draw (\i,-3)--(\i,-4);
      }
      \foreach \i in {0,...,2} {
        \draw (\i,-4)--(\i,-5);
      }
      \draw[very thick]
      (7,0)--(7,-1)--(4,-1)--(4,-3)--(1,-3)--(1,-4)--(0,-4)--(0,-5);
    \end{tikzpicture}
    \caption{Left: The width-5, height-3 rim path of $\lambda=(4,1)$
      is drawn in bold, and the monomial set is
      $U_{(4,1)}^{5,3}=\{x^6y,x^2y^5,y^7\}$. Right: Allowed rim path
      segments from Example \ref{ex:AdmissiblePartitions}, with the
      example $\lambda=(7,4,4,1)$ in bold.}
    \label{fig:AllowedSegments}
  \end{figure}
\end{ex}
By Proposition \ref{prop:IntersectDependenceLociLambda}, the matroid
stratification of $(\P^1)^{[k]}$ is ``generated'' (via taking
intersections and complements) by the loci
$\mathcal{D}(U_\lambda^{h,k}).$ Thus the stratification is entirely
determined by the following:

\begin{thm}\label{thm:SchurPolynomials}
  The dependence subscheme
  $\mathcal{D}(U_\lambda^{h,k})\subseteq(\P^1)^{[k]}$ is the vanishing
  locus of
  $$e_0(x_1,\ldots,y_k)^{h-\lambda_1}s_\lambda(x_1,\ldots,y_k)$$ (via
  the isomorphism $(\P^1)^{[k]}\cong\Sym^k\P^1$).
\end{thm}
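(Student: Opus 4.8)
**The plan is to identify $\mathcal{D}(U_\lambda^{h,k})$ as a zero locus of a single section of $\O(h+k-1)$ on $\P^k = (\P^1)^{[k]}$, compute that section explicitly, and recognize it as $e_0^{h-\lambda_1}s_\lambda$.** Recall that $(\P^1)^{[k]}\cong\P^k$ carries the ideal $I=(f)$ with $f=a_0x^k+\cdots+a_ky^k$ over the point $[a_0:\cdots:a_k]$, and that the tautological sub-line-bundle $\mathcal{I}_k\subseteq\mathcal{R}_k$ is $\O(-1)$. First I would observe that the quotient bundle $\mathcal{Q}_{h+k-1}$ on $\P^k$ is the rank-$k$ bundle whose fiber at $(f)$ is $R_{h+k-1}/(f\cdot R_{h-1})$, and that the map $R_{h-1}\otimes\mathcal{I}_k\to\mathcal{R}_{h+k-1}$ given by multiplication by $f$ realizes the subbundle $\mathcal{I}_{h+k-1}=f\cdot R_{h-1}$ as a quotient of $R_{h-1}\otimes\O(-1)$; in fact this multiplication map is injective on fibers (no polynomial of degree $<k$ kills the nonzero $f$), so $\mathcal{I}_{h+k-1}\cong R_{h-1}\otimes\O(-1)$ is free of rank $h$ twisted by $\O(-1)$, and $\bigwedge^h\mathcal{I}_{h+k-1}\cong\O(-h)$. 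The section $\sigma_{U_\lambda}\in\Gamma(\bigwedge^k\mathcal{Q}_{h+k-1})$ cutting out $\mathcal{D}(U_\lambda^{h,k})$ is, up to the canonical pairing $\bigwedge^k\mathcal{Q}_{h+k-1}\otimes\bigwedge^h\mathcal{I}_{h+k-1}\to\bigwedge^{h+k}\mathcal{R}_{h+k-1}\cong\O$ (here using $\dim R_{h+k-1}=h+k$), the same data as a map $\bigwedge^h\mathcal{I}_{h+k-1}\to\O$, i.e. a section of $\bigwedge^h\mathcal{I}_{h+k-1}^\vee\cong\O(h)$. Wait — I must be careful: $\sigma_{U_\lambda}$ vanishes exactly when the $k$ monomials in $U_\lambda^{h,k}$ span a subspace meeting $\mathcal{I}_{h+k-1}$ nontrivially, equivalently when $\bigwedge^k U_\lambda \wedge \bigwedge^h(f\cdot R_{h-1}) = 0$ inside $\bigwedge^{h+k}R_{h+k-1}$, which is a single scalar equation. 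So the vanishing locus is $V(s)$ for a single $s\in\Gamma(\P^k,\O(h+k-1))$, after accounting for the $\O(-1)$ twists: the wedge of $f\cdot R_{h-1}$ contributes a factor homogeneous of degree $h$ in the $a_i$, and the determinant itself another degree contribution, but the cleanest bookkeeping is that $s = \det$ of the $(h+k)\times(h+k)$ matrix whose columns are the coordinate vectors of $x^jy^{h+k-1-j}f$ for $0\le j\le h-1$ together with the coordinate vectors of the monomials in $U_\lambda^{h,k}$.

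**The second step is to evaluate this determinant after passing to roots.** Writing $f = \prod_{i=1}^k(y_i x - x_i y)$ (up to scalar, on the dense locus where $f$ has distinct roots $[x_i:y_i]$, then by continuity everywhere), the space $f\cdot R_{h-1}$ is the space of degree-$(h+k-1)$ forms vanishing at all $k$ roots. So $s$ vanishes at $(f)$ iff there is a degree-$(h+k-1)$ form in $\Span(U_\lambda^{h,k})$ vanishing at all $k$ roots — i.e. iff the $k\times(h+k)$ evaluation matrix $(m(x_i,y_i))_{m\in U_\lambda^{h,k},\ i\in[k]}$ is singular, which since it is $k\times k$-many-points is just the vanishing of its maximal minor, a $k\times k$ determinant $\det\big(m_a(x_i,y_i)\big)_{a,i}$ where $m_1,\dots,m_k$ enumerate $U_\lambda$. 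Here is where the rim-path combinatorics pays off: the exponents appearing in $U_\lambda^{h,k}$ are exactly $\{\,(h+k-1) - (\lambda_j + k - j)\ :\ j=1,\dots,k\,\}$ read off from the vertical steps of $P_\lambda^{h,k}$ — this is the standard dictionary between the staircase boundary of a Young diagram inside a $k\times h$ box and the shifted partition $(\lambda_j + k - j)$. Substituting, the determinant $\det\big(m_a(x_i,y_i)\big)$ becomes exactly the bihomogeneous generalized Vandermonde $a_{(\lambda_1+k-1,\dots,\lambda_k)}(x_1,y_1,\dots,x_k,y_k)$ from Definition \ref{Def:SymmetricPolynomials} — possibly up to an overall $\prod_i y_i^{\,?}$ from the $x^jy^{h+k-1-j}$ bihomogenization and up to sign/reordering. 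Dividing by the ordinary Vandermonde $a_{(k-1,\dots,0)}$, which is the content of the $\mathcal{I}$-factor $\bigwedge^h(f\cdot R_{h-1})$ expressed in roots, yields $s_\lambda$ times a monomial; tracking the $y$-degrees carefully gives the extra factor $e_0^{h-\lambda_1} = (y_1\cdots y_k)^{h-\lambda_1}$, which accounts precisely for the difference between the bihomogeneous degree $\lambda_1$ of $s_\lambda$ and the required degree $h+k-1-k = h-1$... let me restate: $s = e_0^{h-\lambda_1}\, s_\lambda$ has bihomogeneous degree $(h-\lambda_1)+\lambda_1 = h$ in each pair $(x_i,y_i)$, hence total degree $hk$ ... and $\O(n)$ on $\P^k$ corresponds to bidegree $n$ in each pair, so this is $\O(h)$, and the claimed section lives in $\O(h)$ not $\O(h+k-1)$; the resolution is that the ambient wedge $\bigwedge^{h+k}\mathcal{R}$ is trivial and the $k$-fold wedge $\bigwedge^k\Span(U_\lambda)$ of constant monomials is also constant, so the only twisting comes from $\bigwedge^h\mathcal{I}_{h+k-1}^\vee=\O(h)$, consistent with the statement.

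**The main obstacle will be the precise bookkeeping of twists, scalars, and the monomial factor** — specifically (i) confirming $\mathcal{I}_{h+k-1}\cong R_{h-1}\otimes\O(-1)$ with the correct identification so that $\bigwedge^h$ of it is genuinely $\O(-h)$ and not a further twist, (ii) pinning down that the extra monomial coming out of the determinant computation is exactly $(y_1\cdots y_k)^{h-\lambda_1}$ and not some other symmetric monomial — this requires carefully comparing the bihomogenization convention "$x^{h+k-1-i}y^i$" against the Vandermonde convention "$x_j^{l_i}y_j^{l_1-l_i}$" in Definition \ref{Def:SymmetricPolynomials}, and using that when $\lambda_1 < h$ the top row of the rim path forces extra horizontal steps at the start, shifting every exponent and extracting a common power of $y_j$ from each column, and (iii) checking the degenerate-$f$ (repeated roots) cases by a density/continuity argument so that the scheme-theoretic equality $\mathcal{D}(U_\lambda^{h,k}) = V(e_0^{h-\lambda_1}s_\lambda)$ holds with the stated (reduced or non-reduced) scheme structure, which follows because both sides are Cartier divisors cut out by sections of the same line bundle agreeing on a dense open set. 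Once these identifications are nailed down, Proposition \ref{prop:IntersectDependenceLociLambda} was already quoted to reduce the general $\mathcal{D}(U)$ to these, so the theorem follows; I would present the computation first on the locus of distinct roots where everything is a transparent Vandermonde manipulation, then invoke continuity.
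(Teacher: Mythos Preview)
Your approach is correct and genuinely different from the paper's. You pass to the roots $[x_i:y_i]$ of $f$, trivialize $\mathcal{Q}_{h+k-1}$ by evaluation on the distinct-roots locus, and recognize the resulting $k\times k$ determinant as the generalized Vandermonde $a_{(\lambda_1+k-1,\ldots,\lambda_k)}$; after factoring out $(y_1\cdots y_k)^{h-\lambda_1}=e_0^{h-\lambda_1}$ and dividing by the ordinary Vandermonde $a_{(k-1,\ldots,0)}$, you land on $s_\lambda$ via the paper's own bialternant Definition~\ref{Def:SymmetricPolynomials}. The paper instead stays on the coefficient side: it writes the matrix of the inclusion $\mathcal{I}_{h+k-1}\hookrightarrow\mathcal{R}_{h+k-1}$ in the monomial basis, obtaining a Toeplitz-type matrix with entries $e_{b-j}$; deleting the $k$ rows indexed by $U_\lambda^{h,k}$ gives an $h\times h$ matrix whose determinant is identified with $e_0^{h-\lambda_1}s_\lambda$ by the \emph{second Jacobi--Trudi formula}. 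So you use the bialternant definition directly, while the paper invokes Jacobi--Trudi as a black box.

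What each buys: the paper's argument is global from the outset --- it never needs distinct roots, a density argument, or an algebraically closed field, and the line-bundle bookkeeping is transparent because everything is written in the coefficients $a_i=e_i$. Your argument is more conceptual given how the paper defines $s_\lambda$, and the geometric picture (dependence $\Leftrightarrow$ some $U_\lambda$-supported form vanishes at all roots $\Leftrightarrow$ evaluation matrix singular) is very clean. Your worry about the twist is resolved exactly as you suspected: on $(\P^1)^k\setminus\Delta$ the evaluation isomorphism identifies $\bigwedge^k\mathcal{Q}_{h+k-1}$ with $\O(h+k-1,\ldots,h+k-1)$, whereas the global identification with $\O(h)$ pulls back to $\O(h,\ldots,h)$; the Vandermonde, a section of $\O(k-1,\ldots,k-1)$ vanishing precisely on $\Delta$, is the transition function, which is why dividing by it is correct and why the quotient extends across $\Delta$. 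Since two sections of the same line bundle on the integral scheme $\P^k$ agreeing on a dense open are equal, your density step does give scheme-theoretic equality, not merely set-theoretic.
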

\begin{rem}
  A. Fink independently observed a connection between matroid strata
  in $(\P^1)^{[k]}$ and Schur polynomials.
\end{rem}
\begin{proof}
  For notational convenience, in this proof we will write $e_i$ for
  $e_i(x_1,\ldots,y_k)$ and $s_\lambda$ for
  $s_\lambda(x_1,y_1,\ldots,x_k,y_k)$.
  
  Recall the tautological sequences \eqref{eq:TautSeqD}. Let $f$ be a
  nonvanishing local section of the line bundle $\mathcal{I}_k.$ Then
  in terms of the roots $[x_1:y_1],\ldots,[x_k:y_k],$ we have
  $$f=\prod_{i=1}^k(y_ix-x_iy)=e_0x^k-e_1x^{k-1}y+\cdots+(-1)^ke_ky^k.$$
  
  \noindent \textbf{Step 0.} Since $e_0^{h-\lambda_1}s_\lambda$ is
  homogeneous in $y_1,\ldots,y_k$, we may instead work with the
  negative roots, and write
  \begin{align*}
    f=\prod_{i=1}^k(y_ix+x_iy)=e_0x^k+e_1x^{k-1}y+\cdots+e_ky^k.
  \end{align*}
  The lack of signs will simplify Step 2.

  \medskip
  
  \noindent\textbf{Step 1.} 
  By definition, $\mathcal{D}(U_\lambda^{h,k})$ is the vanishing locus
  of the section
  $$\sigma_{U_\lambda^{h,k}}\in\bigwedge\nolimits^k\mathcal{Q}_{h+k-1}=\Hom\left(\bigwedge\nolimits^k\Span(U_\lambda^{h,k}),\bigwedge\nolimits^k\mathcal{Q}_{h+k-1}\right)$$
  defined as the $k$th wedge of the chain of maps
  \begin{align*}
    \Span(U_\lambda^{h,k})\into\mathcal{R}_{h+k-1}\to\mathcal{Q}_{h+k-1}.
  \end{align*}
  By duality, there is a natural isomorphism
  $$\Hom\left(\bigwedge\nolimits^k\Span(U_\lambda^{h,k}),\bigwedge\nolimits^k\mathcal{Q}_{h+k-1}\right)\to\Hom\left(\left(\bigwedge\nolimits^k\mathcal{Q}_{h+k-1}\right)^{\vee},\left(\bigwedge\nolimits^k\Span(U_\lambda^{h,k})\right)^{\vee}\right).$$ The two exact sequences
  \begin{align*}
    0\to\mathcal{I}_{h+k-1}\to\mathcal{R}_{h+k-1}\to\mathcal{Q}_{h+k-1}\to0
  \end{align*}
  and
  \begin{align*}
    0\to\Span(U_\lambda^{h,k})\to\mathcal{R}_{h+k-1}\to\mathcal{Q}_{h+k-1}\to0
  \end{align*}
  give identifications
  \begin{align*}
  \left(\bigwedge\nolimits^k\mathcal{Q}_{h+k-1}\right)^{\vee}&\cong\bigwedge\nolimits^h\mathcal{I}_{h+k-1}\\\left(\bigwedge\nolimits^k\Span(U_\lambda^{h,k})\right)^{\vee}&\cong\bigwedge\nolimits^h\mathcal{R}_{h+k-1}/\Span(U_\lambda^{h,k}).
  \end{align*}
  Thus $\sigma_{U_\lambda^{h,k}}$ is identified with the section of
  $\Hom\left(\bigwedge^h\mathcal{I}_{h+k-1},\bigwedge^h\mathcal{R}_{h+k-1}/\Span(U_\lambda^{h,k})\right)$
  defined as the $h$-th (top) wedge of the chain of maps
  \begin{align}\label{eq:ABchain}
    \mathcal{I}_{h+k-1}\xrightarrow{A}{}\mathcal{R}_{h+k-1}\xrightarrow{B}{}\mathcal{R}_{h+k-1}/\Span(U_\lambda^{h,k}),
  \end{align}
  i.e. $\det(B\circ A).$ Note that
  \begin{align*}
    \Hom\left(\bigwedge\nolimits^h\mathcal{I}_{h+k-1},\bigwedge\nolimits^h\mathcal{R}_{h+k-1}/\Span(U_\lambda^{h,k})\right)&\cong\Hom\left(\bigwedge\nolimits^h(\mathcal{R}_{h-1}\otimes\mathcal{I}_k),\bigwedge\nolimits^h\mathcal{R}_{h+k-1}/\Span(U_\lambda^{h,k})\right)\\
                                                                                         &\cong\Hom\left(\bigwedge\nolimits^h\mathcal{R}_{h-1}\otimes\mathcal{I}_k^{\otimes h},\bigwedge\nolimits^h\mathcal{R}_{h+k-1}/\Span(U_\lambda^{h,k})\right)\\
                                                                                         &\cong(\mathcal{I}_k^*)^{\otimes h}\cong\O(h),
  \end{align*}
  
  \noindent\textbf{Step 2.} By principality, 
  there is a natural multiplication isomorphism
  $$R_{h-1}\otimes\mathcal{I}_k\to\mathcal{I}_{h+k-1}.$$
  The inclusion $A$ from \eqref{eq:ABchain} has the following matrix
  $X_A$ with respect to the basis
  $\{m\otimes f:\text{monomials }m\in R_{h-1}\}$ for
  $\mathcal{I}_{h+k-1}$ and the monomial basis for $R_{h+k-1}$
  (ordered such that larger powers of $x$ appear first):
  \begin{align}\label{eq:ElemMatrix}
    X_A=\left(e_{b-j}\right)_{0\le b\le h+k-1}^{0\le j\le h-1}=
    \begin{pmatrix}
      e_0&0&\cdots&0&0\\
      e_1&e_0&\cdots&0&0\\
      e_2&e_1&\ddots&e_0&0\\
      \vdots&\vdots&\ddots&e_1&e_0\\
      e_k&e_{k-1}&\ddots&e_2&e_1\\
      0&e_k&\ddots&\vdots&e_2\\
      \vdots&\vdots&\ddots&e_k&\vdots\\
      0&0&\cdots&0&e_k
    \end{pmatrix}
  \end{align}
  The (square) matrix $X_{B\circ A}$ of $B\circ A$ is obtained by
  deleting the rows corresponding to elements of
  $U_\lambda^{h,k}$. Let $X_{B\circ A}'$ be the matrix obtained by
  reversing the order of the rows and the order of the columns in
  $X_{B\circ A}$. Define $b_0,\ldots,b_{h-i}$ so that the $i$-th row
  of $X_{B\circ A}'$ is the $b_i$-th row of $X_A.$




  Note that rows of $X_{B\circ A}'$ correspond to rightward steps in
  the \textit{reversed} width-$h$, height-$k$ rim path of $\lambda$
  --- that is, to columns in the Young diagram of $\lambda$. The
  $i$-th row of $X_{B\circ A}'$ (starting with $i=0$) has entries
  $e_{\ell_i},e_{\ell_i+1},\ldots,e_{\ell_i+h-1},$ where
  $$\ell_i=k-i-\#\{b\le b_i:x^{b}y^{h+k-1-b}\in
  U_\lambda^{h,k}\}.$$ Since elements of
  $\{b\le b_i:x^{b}y^{h+k-1-b}\in U_\lambda^{h,k}\}$ correspond to
  upward steps in the reversed rim path, we see that
  $\ell_i+i=k-\#\{b\le b_i:x^{b}y^{h+k-1-b}\}$ is the $i$-th entry of
  the conjugate partition $\lambda'$. (Here $\lambda'$ is taken to
  have exactly $h$ entries, some of which may be zero.) Thus
  $X_{B\circ A}'=(e_{\lambda'_i+j-i})_{i,j=0}^{h-1}$. Note that
  $\ell_i+i=0$ for $\lambda_1<i\le h.$ Expanding the determinant along
  the last $h-\lambda_1$ rows gives
  $$\det(X_{B\circ A})=e_0^{h-\lambda_1}\det((e_{\lambda'_i+j-i})_{i,j=0}^{\lambda_1}).$$
  By the second Jacobi-Trudi formula,
  $\det(X_{B\circ A})=\pm\det(X_{B\circ A}')=\pm e_0^{h-\lambda_1}s_\lambda.$
\end{proof}

\begin{rem}\label{rem:FactorOutMoreElems}
  If $\lambda_k>0,$ 
  then expanding the Jacobi-Trudi formula gives
  \begin{align*}
    s_\lambda=e_k^{\lambda_k}s_{(\lambda_1-\lambda_k,\lambda_2-\lambda_k,\ldots,\lambda_{k-1}-\lambda_k)}(x_1,\ldots,y_k).
  \end{align*}
  In particular, Theorem \ref{thm:SchurPolynomials} now implies that
  \begin{align*}
    \mathcal{D}(U_\lambda^{h,k})=V(e_0^{h-\lambda_1}e_k^{\lambda_k}s_{(\lambda_1-\lambda_k,\lambda_2-\lambda_k,\ldots,\lambda_{k-1}-\lambda_k)}(x_1,\ldots,y_k)).
  \end{align*}
  This is a manifestation of Proposition \ref{prop:StrataPullback}.
\end{rem}
\begin{rem}\label{rem:IntersectionsOfSchurDivisors}
  Theorem \ref{thm:SchurPolynomials} reduces all questions about the
  matroid stratification to questions about the intersection theory
  of Schur polynomials -- however, it appears that intersection theory
  of Schur polynomials has not been actively studied.
\end{rem}
\begin{rem}
  If $\mathbbm{k}=\C$ (or more generally if $\mathbbm{k}$ is
  uncountable and algebraically closed), it follows from Theorem
  \ref{thm:SchurPolynomials} that for a very general point
  $I\in(\P^1)^{[k]}$, $\Trop(I_d)$ is the uniform matroid of the
  appropriate rank, for all $d$. This is a special case of a
  forthcoming result by Maclagan and the author, which states that
  under the same assumptions on $\mathbbm{k},$ a very general point
  $I\in\Hilb_{(a,b)}^h(\A^2)$ satisfies
  $\Trop(I_d)=U_{h(d),\Mon_d(a,b)}$ for all $d$, where $(a,b)$ is any
  positive grading and $h$ is any Hilbert function. (Recall Example
  \ref{ex:UniformMatroid}.)
\end{rem}

\begin{ex}\label{ex:Necklace2}
  We continue Example \ref{ex:Necklace1}. Fix $k\ge1$ and $d\ge k.$ By
  Proposition \ref{prop:IntersectDependenceLociLambda} and Theorem
  \ref{thm:SchurPolynomials}, there is a certain set of Schur
  polynomials in $k$ variables whose common vanishing locus is a
  collection of rational curves indexed by the set $N_{d,k}$ of binary
  necklaces with $k$ black and $d-k$ white beads. One may also show
  this directly, as follows.

  By the analysis in Example \ref{ex:AdmissiblePartitions}, the Schur
  polynomials in question are precisely those $s_\lambda$ such that
  $\lambda$ has at most $k-1$ parts, and $\lambda_1=d-k+1.$ The
  vanishing of these polynomials is highly non-transverse; however,
  calculations using the first Jacobi-Trudi formula show that the
  ideal they generate is in fact equal to the ideal
  $J=(h_{d-k+1},h_{d-k+2},\ldots,h_{d-1})$, where $h_i$ is the $i$-th
  (bihomogeneous) \textit{complete} symmetric polynomial,
  i.e.
  $$h_i=\sum_{\substack{i_1,\ldots,i_k\ge0\\i_1+\cdots+i_k=i}}\thickspace\thickspace\prod_{j=1}^kx_j^{i_j}y_j^{i-i_j}.$$ By
  \cite{ConcaKrattenthalerWatanabe2009}, these polynomials form a
  regular sequence, so $V(J)\subseteq(\P^1)^{[k]}$ is
  1-dimensional, as desired.

  One may show directly that if $z_1,\ldots,z_k\in\C\P^1$ are distinct
  vertices of a regular $d$-gon centered at 0, then the polynomials
  $h_{d-k+1},\ldots,h_{d-1}$ vanish at $(z_1,\ldots,z_k).$ This shows
  that $\mathcal{D}(U)$ contains the collection of rational curves in
  Example \ref{ex:Necklace1}. One may then show by a degree
  calculation that the $\mathcal{D}(U)$ does not contain any other
  points.
\end{ex}

\subsection{An open problem interlude: The tropical ideal associated
  to a necklace}\label{sec:NecklaceToMatroid}
Following Examples \ref{ex:Necklace1} and \ref{ex:Necklace2}, we now
pose a natural combinatorial question, to which we do not know the
answer. Let $\gamma\in N_{d,k}$ be a necklace with $k$ black beads and
$d-k$ white beads. There is a corresponding curve $C_\gamma\cong\C^*$
in $\mathcal{D}(\{x^d,y^d\})\subseteq(\P^1)^{[k]}$. In fact, as
$C_\gamma$ is a torus orbit (see Section \ref{sec:Orbits}), it has the
property that any $I\in C_\gamma$ has the same tropicalization
$\Trop(\gamma):=\Trop(I).$ (In other words, $C_\gamma$ is in a single
matroid stratum; we will see that it may not be an entire matroid
stratum.) For example, see Figure \ref{fig:D62} and
Figure \ref{fig:N84}.
\begin{question}\label{q:MGammaAlgorithm}
  Is there a combinatorial algorithm to compute the function
  $\gamma\mapsto\Trop(\gamma)$?
\end{question}
\begin{figure}
  \centering
  \begin{align*}
    \begin{array}{ccc}
      \includegraphics[height=1.1in]{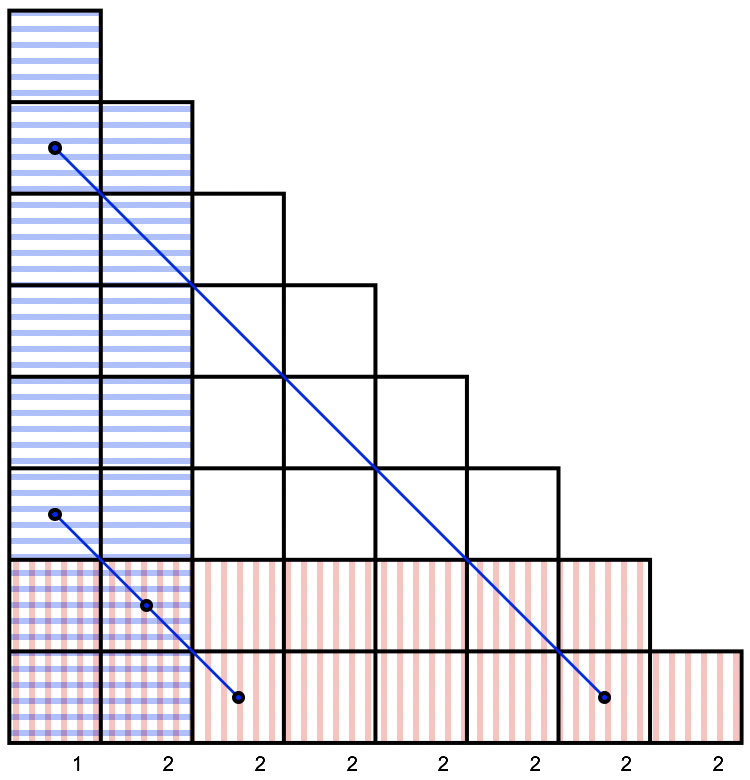}&
                                            \includegraphics[height=1.1in]{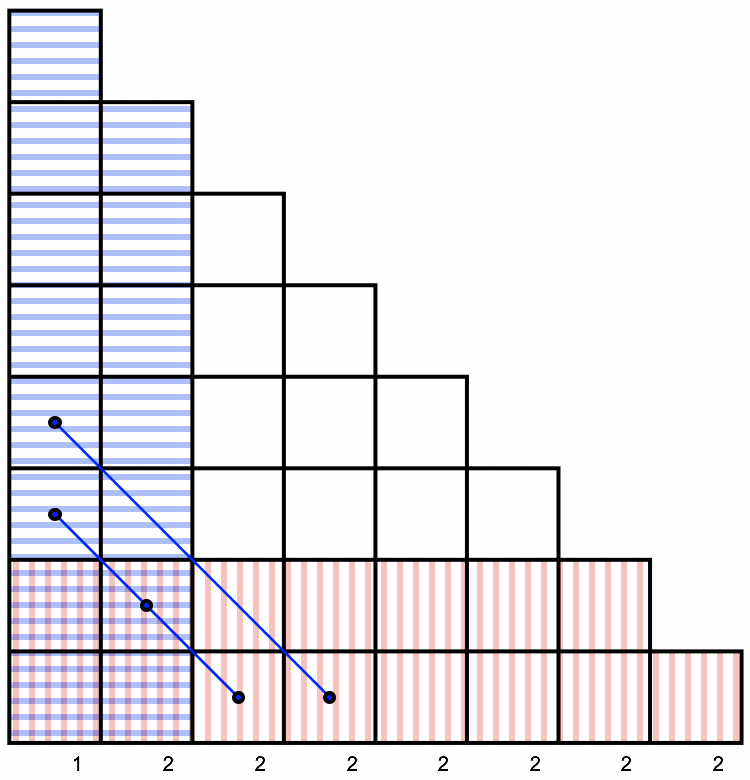}&
                                                                                  \includegraphics[height=1.1in]{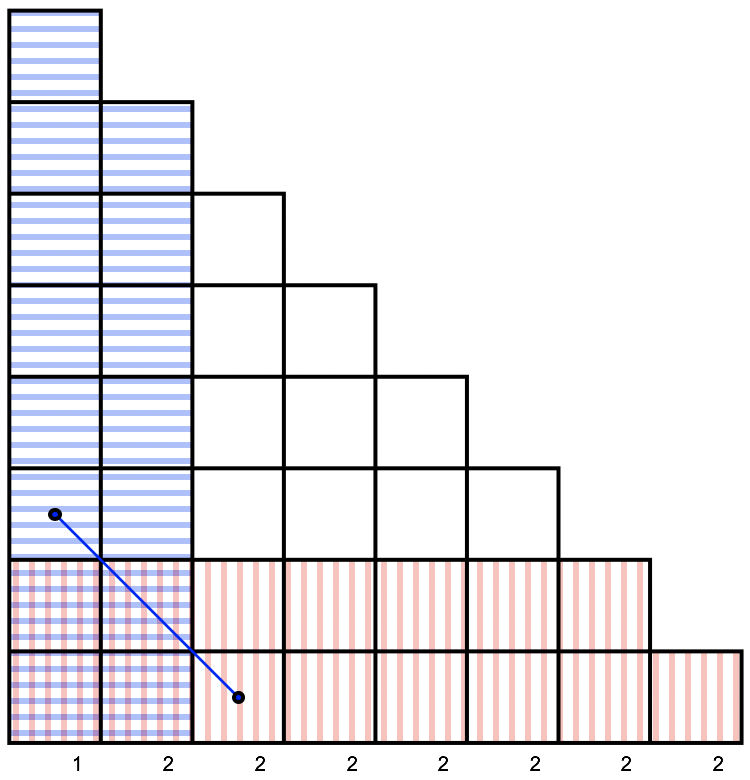}\\
      x_2^2y_1^2-x_1x_2y_2y_1+x_1^2y_2^2=0&x_2^2y_1^2+x_1x_2y_2y_1+x_1^2y_2^2=0&x_2y_1+x_1y_2=0\\
      \begin{tikzpicture}[scale=.7]
        \draw
        (0:1)--(60:1)--(120:1)--(180:1)--(240:1)--(300:1)--cycle;
        \filldraw[fill=black] (0:1) circle(.1);
        \filldraw[fill=black] (60:1) circle(.1);
        \filldraw[fill=white] (120:1) circle(.1);
        \filldraw[fill=white] (180:1) circle(.1);
        \filldraw[fill=white] (240:1) circle(.1);
        \filldraw[fill=white] (300:1) circle(.1);
        \draw (0,1) node{};
      \end{tikzpicture}
        &\begin{tikzpicture}[scale=.7]
        \draw
        (0:1)--(60:1)--(120:1)--(180:1)--(240:1)--(300:1)--cycle;
        \filldraw[fill=black] (0:1) circle(.1);
        \filldraw[fill=white] (60:1) circle(.1);
        \filldraw[fill=black] (120:1) circle(.1);
        \filldraw[fill=white] (180:1) circle(.1);
        \filldraw[fill=white] (240:1) circle(.1);
        \filldraw[fill=white] (300:1) circle(.1);
        \draw (0,1) node{};
      \end{tikzpicture}
          &\begin{tikzpicture}[scale=.7]
        \draw
        (0:1)--(60:1)--(120:1)--(180:1)--(240:1)--(300:1)--cycle;
        \filldraw[fill=black] (0:1) circle(.1);
        \filldraw[fill=white] (60:1) circle(.1);
        \filldraw[fill=white] (120:1) circle(.1);
        \filldraw[fill=black] (180:1) circle(.1);
        \filldraw[fill=white] (240:1) circle(.1);
        \filldraw[fill=white] (300:1) circle(.1);
        \draw (0,1) node{};
      \end{tikzpicture}
    \end{array}
  \end{align*}  
  \caption{The three elements of $N_{6,2}$ and their
    tropicalizations. Each equation defines (the closure of) the
    corresponding stratum in $(\P^1)^{[2]}\cong\Sym^2\P^1$.}
  \label{fig:D62}
\end{figure}
\begin{figure}
  \centering
  \begin{tabular}{c|c|c|c|c|c}
    \includegraphics[height=.9in]{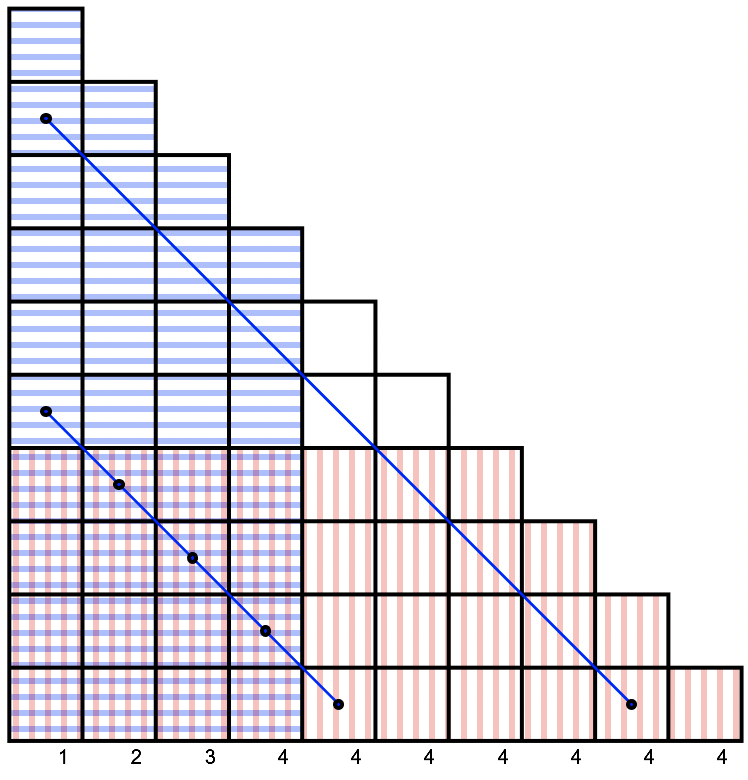}&\includegraphics[height=.9in]{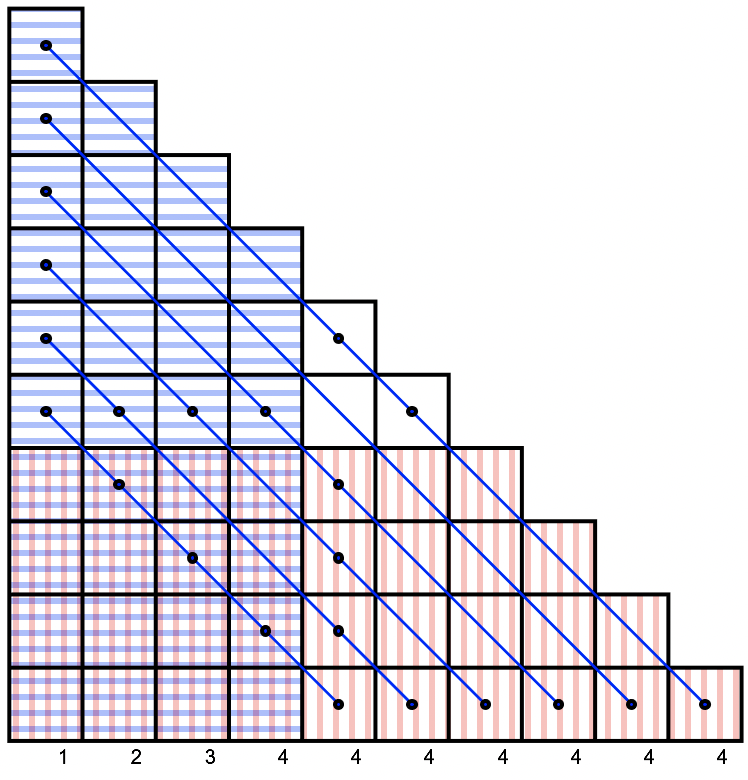}&\includegraphics[height=.9in]{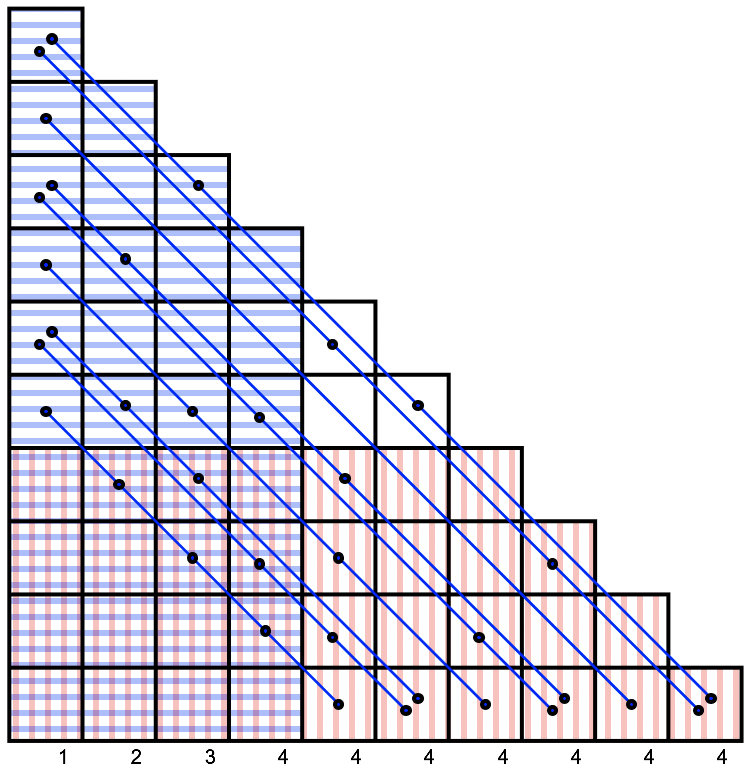}&\includegraphics[height=.9in]{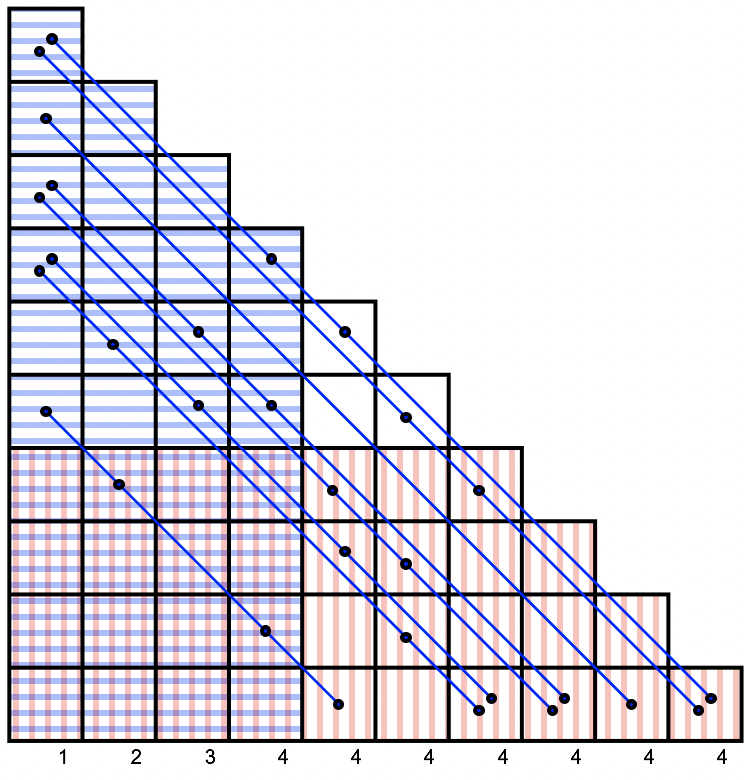}&\includegraphics[height=.9in]{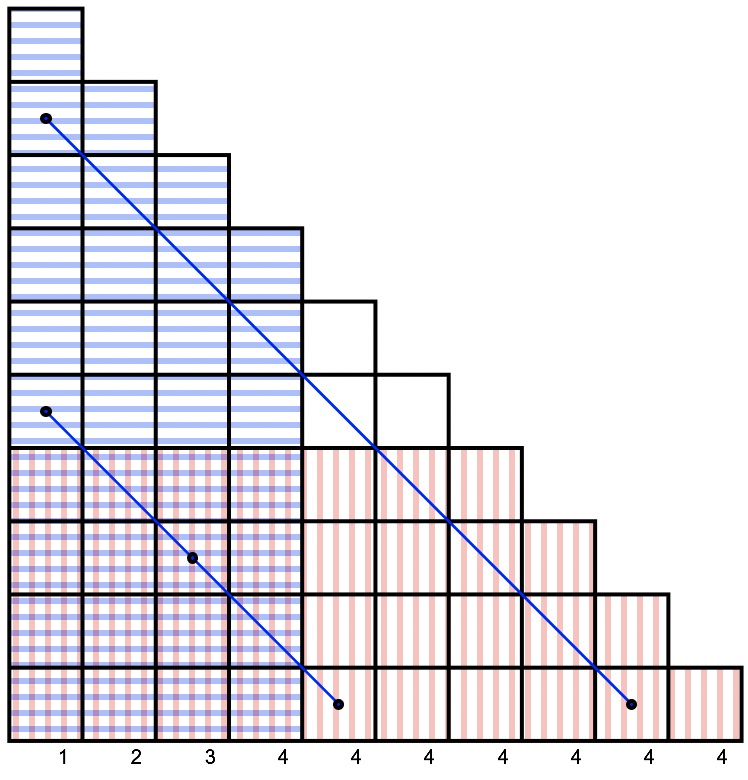}&\includegraphics[height=.9in]{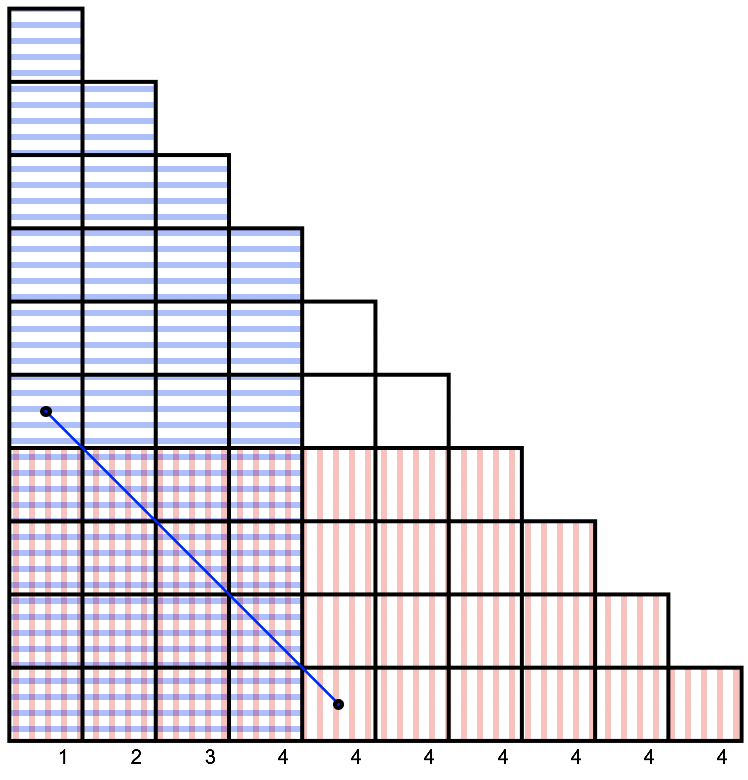}\\\hline
    \begin{tikzpicture}[scale=0.5]
      \draw
      (0:1)--(45:1)--(90:1)--(135:1)--(180:1)--(225:1)--(270:1)--(315:1)--cycle;
      \filldraw[fill=black] (0:1) circle(.1);
      \filldraw[fill=black] (45:1) circle(.1);
      \filldraw[fill=black] (90:1) circle(.1);
      \filldraw[fill=black] (135:1) circle(.1);
      \filldraw[fill=white] (180:1) circle(.1);
      \filldraw[fill=white] (225:1) circle(.1);
      \filldraw[fill=white] (270:1) circle(.1);
      \filldraw[fill=white] (315:1) circle(.1);
      \draw (90:1.1) node {};
    \end{tikzpicture}\thickspace\begin{tikzpicture}[scale=0.5]
      \draw
      (0:1)--(45:1)--(90:1)--(135:1)--(180:1)--(225:1)--(270:1)--(315:1)--cycle;
      \filldraw[fill=black] (0:1) circle(.1);
      \filldraw[fill=black] (45:1) circle(.1);
      \filldraw[fill=white] (90:1) circle(.1);
      \filldraw[fill=black] (135:1) circle(.1);
      \filldraw[fill=white] (180:1) circle(.1);
      \filldraw[fill=white] (225:1) circle(.1);
      \filldraw[fill=black] (270:1) circle(.1);
      \filldraw[fill=white] (315:1) circle(.1);
    \end{tikzpicture}&\begin{tikzpicture}[scale=0.5]
      \draw
      (0:1)--(45:1)--(90:1)--(135:1)--(180:1)--(225:1)--(270:1)--(315:1)--cycle;
      \filldraw[fill=black] (0:1) circle(.1);
      \filldraw[fill=black] (45:1) circle(.1);
      \filldraw[fill=black] (90:1) circle(.1);
      \filldraw[fill=white] (135:1) circle(.1);
      \filldraw[fill=black] (180:1) circle(.1);
      \filldraw[fill=white] (225:1) circle(.1);
      \filldraw[fill=white] (270:1) circle(.1);
      \filldraw[fill=white] (315:1) circle(.1);
    \end{tikzpicture}\thickspace\begin{tikzpicture}[scale=0.5]
      \draw
      (0:1)--(45:1)--(90:1)--(135:1)--(180:1)--(225:1)--(270:1)--(315:1)--cycle;
      \filldraw[fill=black] (0:1) circle(.1);
      \filldraw[fill=black] (45:1) circle(.1);
      \filldraw[fill=black] (90:1) circle(.1);
      \filldraw[fill=white] (135:1) circle(.1);
      \filldraw[fill=white] (180:1) circle(.1);
      \filldraw[fill=white] (225:1) circle(.1);
      \filldraw[fill=black] (270:1) circle(.1);
      \filldraw[fill=white] (315:1) circle(.1);
    \end{tikzpicture}&\begin{tikzpicture}[scale=0.5]
      \draw
      (0:1)--(45:1)--(90:1)--(135:1)--(180:1)--(225:1)--(270:1)--(315:1)--cycle;
      \filldraw[fill=black] (0:1) circle(.1);
      \filldraw[fill=black] (45:1) circle(.1);
      \filldraw[fill=white] (90:1) circle(.1);
      \filldraw[fill=black] (135:1) circle(.1);
      \filldraw[fill=black] (180:1) circle(.1);
      \filldraw[fill=white] (225:1) circle(.1);
      \filldraw[fill=white] (270:1) circle(.1);
      \filldraw[fill=white] (315:1) circle(.1);
    \end{tikzpicture}&\begin{tikzpicture}[scale=0.5]
      \draw
      (0:1)--(45:1)--(90:1)--(135:1)--(180:1)--(225:1)--(270:1)--(315:1)--cycle;
      \filldraw[fill=black] (0:1) circle(.1);
      \filldraw[fill=black] (45:1) circle(.1);
      \filldraw[fill=black] (90:1) circle(.1);
      \filldraw[fill=white] (135:1) circle(.1);
      \filldraw[fill=white] (180:1) circle(.1);
      \filldraw[fill=black] (225:1) circle(.1);
      \filldraw[fill=white] (270:1) circle(.1);
      \filldraw[fill=white] (315:1) circle(.1);
    \end{tikzpicture}&\begin{tikzpicture}[scale=0.5]
      \draw
      (0:1)--(45:1)--(90:1)--(135:1)--(180:1)--(225:1)--(270:1)--(315:1)--cycle;
      \filldraw[fill=black] (0:1) circle(.1);
      \filldraw[fill=black] (45:1) circle(.1);
      \filldraw[fill=white] (90:1) circle(.1);
      \filldraw[fill=white] (135:1) circle(.1);
      \filldraw[fill=black] (180:1) circle(.1);
      \filldraw[fill=black] (225:1) circle(.1);
      \filldraw[fill=white] (270:1) circle(.1);
      \filldraw[fill=white] (315:1) circle(.1);
    \end{tikzpicture}&\begin{tikzpicture}[scale=0.5]
      \draw
      (0:1)--(45:1)--(90:1)--(135:1)--(180:1)--(225:1)--(270:1)--(315:1)--cycle;
      \filldraw[fill=black] (0:1) circle(.1);
      \filldraw[fill=white] (45:1) circle(.1);
      \filldraw[fill=black] (90:1) circle(.1);
      \filldraw[fill=white] (135:1) circle(.1);
      \filldraw[fill=black] (180:1) circle(.1);
      \filldraw[fill=white] (225:1) circle(.1);
      \filldraw[fill=black] (270:1) circle(.1);
      \filldraw[fill=white] (315:1) circle(.1);
    \end{tikzpicture}
    \\
                                        &\begin{tikzpicture}[scale=0.5]
                                          \draw
                                          (0:1)--(45:1)--(90:1)--(135:1)--(180:1)--(225:1)--(270:1)--(315:1)--cycle;
                                          \filldraw[fill=black] (0:1) circle(.1);
                                          \filldraw[fill=black] (45:1) circle(.1);
                                          \filldraw[fill=white] (90:1) circle(.1);
                                          \filldraw[fill=black] (135:1) circle(.1);
                                          \filldraw[fill=white] (180:1) circle(.1);
                                          \filldraw[fill=black] (225:1) circle(.1);
                                          \filldraw[fill=white] (270:1) circle(.1);
                                          \filldraw[fill=white] (315:1) circle(.1);
                                        \end{tikzpicture}\thickspace\begin{tikzpicture}[scale=0.5]
                                          \draw
                                          (0:1)--(45:1)--(90:1)--(135:1)--(180:1)--(225:1)--(270:1)--(315:1)--cycle;
                                          \filldraw[fill=black] (0:1) circle(.1);
                                          \filldraw[fill=black] (45:1) circle(.1);
                                          \filldraw[fill=white] (90:1) circle(.1);
                                          \filldraw[fill=white] (135:1) circle(.1);
                                          \filldraw[fill=black] (180:1) circle(.1);
                                          \filldraw[fill=white] (225:1) circle(.1);
                                          \filldraw[fill=black] (270:1) circle(.1);
                                          \filldraw[fill=white] (315:1) circle(.1);
                                        \end{tikzpicture}&&&&
  \end{tabular}
  
  \caption{The ten elements of $N_{8,4}$ and their tropicalizations.}
  \label{fig:N84}
\end{figure}

We do not have a full answer to this question, but we now discuss it
further. Let $\gamma\in N_{d,k}.$ We know that $\{x^d,y^d\}$ is
dependent in $\Trop(\gamma).$ Note that for any $d'\ge k,$
$\{x^{d'},y^{d'}\}$ is dependent in $\Trop(\gamma)$ if and only if the
black beads of $\gamma$ are a subset of the vertices of a regular
$d'$-gon. Rephrasing this:
\begin{prop}
  Let $\gamma\in N_{d,k},$ and let $\alpha$ be the $\gcd$ of the $k$
  distances between consecutive beads in $\gamma$. (Since the sum of
  these distances is $d$, $d$ is divisible by $\alpha$.) Then
  $\{x^{d'},y^{d'}\}$ is dependent in $\Trop(\gamma)$ if and only if
  $d'$ is a multiple of $d/\alpha.$
\end{prop}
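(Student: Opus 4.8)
The plan is to reduce the statement, via the description of the curve $C_\gamma$ recalled just above, to an elementary fact about roots of unity. (Throughout we take $d'\ge1$, so that $\{x^{d'},y^{d'}\}$ is a two-element set.)

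First I would dispose of the small cases. For $1\le d'<k$, every $I\in C_\gamma$ is generated in degree $k$, so $I_{d'}=0$ and $\{x^{d'},y^{d'}\}$ is independent in $\Trop(\gamma)$; on the other hand $d=a_1+\cdots+a_k\ge k\alpha$ forces $d/\alpha\ge k>d'$, so $d'$ is not a (positive) multiple of $d/\alpha$, and the claimed equivalence holds vacuously. So assume $d'\ge k$. For such $d'$, the characterization stated just before the Proposition — which is Example \ref{ex:Necklace1} with $d$ replaced by $d'$ — says that $\{x^{d'},y^{d'}\}$ is dependent in $\Trop(\gamma)$ if and only if the $k$ black beads of $\gamma$ form a subset of the vertices of some regular $d'$-gon in $\C$ centered at $0$.

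Next I would set up coordinates: realize the black beads, up to the $\C^*$-action (which carries regular $d'$-gons centered at $0$ to regular $d'$-gons centered at $0$), as the points $\zeta_d^{p_1},\dots,\zeta_d^{p_k}$ with $\zeta_d=e^{2\pi i/d}$ and $0\le p_1<\cdots<p_k<d$, so that the consecutive gaps are $a_i=p_{i+1}-p_i$ for $i=1,\dots,k$ (cyclically, $p_{k+1}:=p_1+d$), with $\sum_i a_i=d$ and $\alpha=\gcd(a_1,\dots,a_k)$. Writing $\mu_n\subseteq\C^*$ for the group of $n$-th roots of unity, a regular $d'$-gon centered at $0$ is a coset $w\mu_{d'}$, $w\in\C^*$, and the beads lie on such a coset if and only if each ratio $\zeta_d^{p_i-p_1}$ lies in $\mu_{d'}$. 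Since $p_i-p_1=a_1+\cdots+a_{i-1}$, and conversely $a_i\equiv(p_{i+1}-p_1)-(p_i-p_1)\pmod d$ (with $a_k\equiv-(p_k-p_1)$), the subgroups of $\mu_d$ generated by $\{\zeta_d^{p_i-p_1}\}_i$ and by $\{\zeta_d^{a_i}\}_i$ coincide; hence the beads lie on a regular $d'$-gon if and only if $\zeta_d^{a_i}\in\mu_{d'}$ for all $i$.

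Finally, by Bézout the elements $\zeta_d^{a_1},\dots,\zeta_d^{a_k}$ generate the same subgroup of $\mu_d$ as $\zeta_d^{\gcd(a_1,\dots,a_k)}=\zeta_d^\alpha$, and since $\alpha\mid d$ this element has order exactly $d/\alpha$. Thus $\zeta_d^{a_i}\in\mu_{d'}$ for all $i$ if and only if $(d/\alpha)\mid d'$, and chaining the equivalences proves the Proposition. Every step is routine; the only point that needs care is the bookkeeping in the third paragraph — checking that passing between "which vertices carry black beads" and "the cyclic sequence of gaps" leaves the relevant subgroup of $\mu_d$ unchanged — together with the observation that the sole genuinely geometric input, the equivalence between linear dependence of $x^{d'},y^{d'}$ modulo the ideal and the black beads lying on a regular $d'$-gon, is exactly what Example \ref{ex:Necklace1} already supplies.
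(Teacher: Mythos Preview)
Your proof is correct and is precisely the argument the paper intends: the paper gives no separate proof, presenting the proposition only as a ``rephrasing'' of the regular-$d'$-gon criterion stated just before it (which in turn is Example~\ref{ex:Necklace1}), and you have spelled out the elementary root-of-unity computation that makes this rephrasing precise. Your handling of the range $1\le d'<k$ and the bookkeeping identifying the subgroup generated by the $\zeta_d^{p_i-p_1}$ with that generated by the $\zeta_d^{a_i}$ are exactly the details the paper leaves to the reader.
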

Note that this explains all circuits in Figure \ref{fig:D62}.
We also note the following condition, which implies certain necklaces
have the same tropicalization. 
\begin{prop}\label{prop:NecklaceSkip}
  Let $\gamma\in N_{d,k},$ and let $a\in(\Z/d\Z)^\times$. We define
  $a\gamma$ to be the necklace obtained by traversing $\gamma$ by
  jumps of length $a$. (For example, if $\gamma=\raisebox{-2pt}{
  \begin{tikzpicture}[scale=.2]
    \draw (0:1)--(72:1)--(144:1)--(216:1)--(288:1)--(0:1);
    \draw[red] (0:1)--(72:1);
    \filldraw[fill=black] (0:1) circle(.2);
    \filldraw[fill=black] (72:1) circle(.2);
    \filldraw[fill=white] (144:1) circle(.2);
    \filldraw[fill=white] (216:1) circle(.2);
    \filldraw[fill=white] (288:1) circle(.2);
    \draw (0,1) node{};
  \end{tikzpicture}
  }
  $, then $3\gamma=\raisebox{-2pt}{
  \begin{tikzpicture}[scale=.2]
    \draw (216:1)--(72:1)--(288:1)--(144:1)--(0:1);
    \draw[red] (0:1)--(216:1);
    \filldraw[fill=black] (0:1) circle(.2);
    \filldraw[fill=black] (72:1) circle(.2);
    \filldraw[fill=white] (144:1) circle(.2);
    \filldraw[fill=white] (216:1) circle(.2);
    \filldraw[fill=white] (288:1) circle(.2);
    \draw (0,1) node{};
  \end{tikzpicture}
  }=\raisebox{-2pt}{
  \begin{tikzpicture}[scale=.2]
    \draw (0:1)--(72:1)--(144:1)--(216:1)--(288:1)--(0:1);
    \draw[red] (0:1)--(72:1);
    \filldraw[fill=black] (0:1) circle(.2);
    \filldraw[fill=white] (72:1) circle(.2);
    \filldraw[fill=black] (144:1) circle(.2);
    \filldraw[fill=white] (216:1) circle(.2);
    \filldraw[fill=white] (288:1) circle(.2);
    \draw (0,1) node{};
  \end{tikzpicture}
  }
  $). Then $\Trop(\gamma)=\Trop(a\gamma).$
\end{prop}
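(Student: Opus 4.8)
The plan is to reduce the statement about equality of tropicalizations to the fact that the curves $C_\gamma$ and $C_{a\gamma}$ are related by an automorphism of $(\P^1)^{[k]}$ that is induced by an automorphism of $\P^1$ scaling the variables, and hence by a monomial change of coordinates that commutes with tropicalization. Concretely, recall from Example \ref{ex:Necklace1} that a point of $C_\gamma$ is (the ideal of) a set of $k$ distinct vertices $\{z_1,\dots,z_k\}$ of a regular $d$-gon centered at $0$, with the combinatorial pattern of occupied vertices recording $\gamma$. Identify the vertices of the $d$-gon with $\Z/d\Z$ via $j\mapsto \zeta^j$ for a primitive $d$-th root of unity $\zeta$. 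The key observation is that replacing $\zeta$ by $\zeta^a$ (legitimate since $a\in(\Z/d\Z)^\times$, so $\zeta^a$ is again primitive) carries the occupied-vertex pattern of $\gamma$ to that of $a\gamma$, essentially by the definition of $a\gamma$ as "traverse $\gamma$ by jumps of length $a$."

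First I would make this precise at the level of ideals: if $f=\prod_{i=1}^k(y_i x - x_i y)$ with roots $[\zeta^{j_1}:1],\dots,[\zeta^{j_k}:1]$ the occupied vertices of $\gamma$, then the ideal corresponding to the occupied vertices $[\zeta^{a j_1}:1],\dots,[\zeta^{a j_k}:1]$ of $a\gamma$ is obtained from $(f)$ by the substitution $x\mapsto x$, $y\mapsto \mu y$ for an appropriate scalar $\mu$ depending on $a$ and $\zeta$ — more carefully, one checks that the map "$j\mapsto aj$" on $\Z/d\Z$ is realized on $\P^1$ not by a single scaling of a fixed $d$-gon but by the fact that the $a$-th power map $[u:v]\mapsto[u^a:v^a]$ permutes the $d$-th roots of unity by $j\mapsto aj$; however, the power map does not preserve colength $k$, so the correct framework is: both $C_\gamma$ and $C_{a\gamma}$ are $\C^*$-orbits of $d$-gons, and the discrete data distinguishing them among all $d$-gon configurations is exactly a necklace, with the $\Z/d\Z$-labeling depending on a choice of starting vertex and orientation; the content of the proposition is that the necklaces $\gamma$ and $a\gamma$ determine the same orbit in the appropriate "relabeled" sense. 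The cleanest route: show that $C_{a\gamma}$ is the image of $C_\gamma$ under the scaling automorphism $\phi_t\colon(\P^1)^{[k]}\to(\P^1)^{[k]}$, $[u:v]\mapsto[u:tv]$, for a suitable $t$, composed if necessary with the automorphism swapping $x\leftrightarrow y$ — this is where the combinatorics of "jumps of length $a$" must be verified to coincide with rescaling the generating root of unity.

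The second half is the compatibility of tropicalization with such automorphisms. The automorphisms $\phi_t$ (scaling $y$) and the swap $x\leftrightarrow y$ are exactly the torus $T=(\C^*)^2$ and a monomial permutation; by construction $\Trop(I_d)$ depends only on which monomials appear with nonzero coefficient in elements of $I_d$, and scaling variables or permuting $x\leftrightarrow y$ does not change this support data (a diagonal torus action multiplies each coefficient by a nonzero scalar, and $x\leftrightarrow y$ relabels monomials by an involution of $\Mon_d$ that is recorded consistently across all degrees). Hence $\Trop(\phi_t^*I)=\Trop(I)$ for all $t\in\C^*$, and similarly for the swap. Combining: $\Trop(a\gamma)=\Trop(I')$ for any $I'\in C_{a\gamma}=\phi_t(C_\gamma)$ (possibly after a swap), and $\Trop(\phi_t(I))=\Trop(I)=\Trop(\gamma)$ for $I\in C_\gamma$, giving $\Trop(a\gamma)=\Trop(\gamma)$.

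The main obstacle I anticipate is precisely the bookkeeping in the first step: rigorously matching the purely combinatorial operation "traverse $\gamma$ by jumps of length $a$" with a geometric automorphism of $(\P^1)^{[k]}$, keeping track of the choice of base vertex and orientation that are quotiented out in the definition of a necklace, and confirming that the requisite automorphism genuinely lies in the torus (rather than some automorphism under which support data is not manifestly invariant). Once the identification $C_{a\gamma}=\phi_t(C_\gamma)$ (up to the $x\leftrightarrow y$ swap) is nailed down, the rest is formal. It may be cleanest to phrase the first step via the $\C^*$-action on $d$-gons used in Example \ref{ex:Necklace1}: a $d$-gon configuration is determined by one vertex $z_1$ together with the subset $S\subseteq\Z/d\Z$ of occupied vertices relative to $z_1$; the necklace is the $\Z/d\Z$-orbit of $S$ under rotation; and the claim is that multiplying indices by $a$ sends the orbit of $S$ to the orbit of $aS$, which is realized geometrically by $z_1\mapsto z_1$ together with the reparametrization of the $d$-gon that is itself a $\C^*$-scaling composed with a rotation — all inside $T\rtimes\langle\text{swap}\rangle$.
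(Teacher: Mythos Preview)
Your proposal has a genuine gap at exactly the point you flag as ``the main obstacle,'' and in fact that obstacle cannot be overcome along the lines you suggest. The operation $j\mapsto aj$ on the vertex set $\Z/d\Z$ of the $d$-gon is \emph{not} realized by any element of $T\rtimes\langle x\leftrightarrow y\rangle$. On the $d$-th roots of unity, scaling by $t\in\C^*$ acts (after restricting to the $d$-gon) as a rotation $j\mapsto j+c$, and the swap $x\leftrightarrow y$ acts as $j\mapsto -j$; together these generate only the dihedral group $D_d\subseteq\Sym(\Z/d\Z)$. For $a\not\equiv\pm1\pmod d$ the multiplication-by-$a$ map lies outside $D_d$ (e.g.\ $d=5$, $a=2$: the permutation $(0)(1\,2\,4\,3)$ is not dihedral), so there is no torus-or-swap automorphism carrying $C_\gamma$ to $C_{a\gamma}$. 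Your own check that $\{1,\zeta\}$ cannot be scaled to $\{1,\zeta^2\}$ is a concrete instance of this.

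The missing idea, and the paper's actual argument, is to use the \emph{Galois} automorphism $\sigma:\zeta\mapsto\zeta^a$ of $\Q(\zeta_d)$ rather than a geometric automorphism of $\P^1$. This $\sigma$ sends the point set $\{\zeta^{j_1},\ldots,\zeta^{j_k}\}$ to $\{\zeta^{aj_1},\ldots,\zeta^{aj_k}\}$, i.e.\ a point of $C_\gamma$ to a point of $C_{a\gamma}$. A field automorphism preserves the vanishing or nonvanishing of any expression built from field operations --- in particular the Schur determinants $s_\lambda(\zeta^{j_1},\ldots,\zeta^{j_k})$ that (by Theorem~\ref{thm:SchurPolynomials} and Proposition~\ref{prop:IntersectDependenceLociLambda}) determine exactly which sets $U_\lambda^{g,k}$ are dependent in $\Trop(\gamma)$. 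Hence the dependent sets for $\gamma$ and $a\gamma$ coincide. Equivalently: field automorphisms preserve supports of polynomials, so they preserve tropicalizations --- this is the correct replacement for your ``support data is invariant under $T$'' step.
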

\begin{proof}
  The independence of any $k$-element set $U_\lambda^{g,k}$ in
  $\Trop(\gamma)$ is determined by the nonvanishing of an element of
  $\C$ obtained by field operations applied to a primitive $d$th root
  of unity $\zeta$ (namely, the determinant of the associated Schur
  matrix). This nonvanishing is preserved by the field automorphism
  that sends $\zeta\mapsto\zeta^a,$ which determines the independence
  of $U_\lambda^{g,k}$ in $\Trop(\gamma)$.
\end{proof}
\begin{question}
  Does the converse of Proposition \ref{prop:NecklaceSkip} hold? That
  is, can there exist $\gamma_1,\gamma_2\in N_{d,k}$ such that
  $\Trop(\gamma_1)=\Trop(\gamma_2),$ but $\gamma_2\ne a\gamma_1$ for
  $a\in(\Z/d\Z)^\times$?  Observe that no counterexamples appear in
  Figures \ref{fig:D62} or \ref{fig:N84}.
\end{question}

In order to fully characterize $\Trop(\gamma)$, we need to know not
only which sets $\{x^{d'},y^{d'}\}$ are dependent, but which sets
$U_\lambda^{g,k}$ are dependent.

Let $\gamma\in N_{d,k}.$ Given $\lambda$ a partition with at most $k$
parts, let
$\eta_{d,k}(\lambda)=\{\zeta_d^{\lambda_i+k-i-1}:i=1,\ldots,k\}$. Let
$\zeta_d^{a_1},\ldots,\zeta_d^{a_k}$ be a set of points representing
$\gamma,$ and note that
\begin{align*}
  s_\lambda(\zeta_d^{a_1},\ldots,\zeta_d^{a_k})=\det\left((\zeta_d^{a_j(\lambda_i-i-1+k)})_{i,j=1}^k\right)/V,
\end{align*}
where $V$ is a Vandermonde determinant (which is guaranteed to be
nonzero at $\zeta_d^{a_1},\ldots,\zeta_d^{a_k}$). If $\lambda$ is such
that two elements of $\eta_{d,k}(\lambda)$ coincide, then
$s_\lambda(\zeta_d^{a_1},\ldots,\zeta_d^{a_k})=0$ since two rows of
the defining matrix are equal.

On the other hand, if $\eta_{d,k}(\lambda)$ contains $k$ distinct
elements, then $\eta_{d,k}(\lambda)$ naturally corresponds to a
necklace with $k$ black beads and $d-k$ white beads. In particular,
reordering and scaling $\eta_{d,k}(\lambda)$ corresponds to reordering
and scaling the rows of the matrix in the definition of
$s_\lambda(\zeta_d^{a_1},\ldots,\zeta_d^{a_k})$, which does not affect
its rank -- hence, the question of whether
$\gamma\in\mathcal{D}(U_{\lambda}^{g,k})$ for some $\gamma\in N_{d,k}$
depends only on the necklace $\eta_{d,k}(\lambda),$ not $\lambda$
itself.  This dependence is, interestingly, commutative in the
following sense.
\begin{prop}\label{prop:CommutativeNecklaces}
  Let $\gamma\in N_{d,k}$ such that $\gamma=\gamma(\lambda)$. Then
  $\gamma\in\mathcal{D}(U_{\lambda'}^{g,k})$ if and only if
  $\gamma(\lambda')\in\mathcal{D}(U_\lambda^{g,k}).$
\end{prop}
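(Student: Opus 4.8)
The plan is to use Theorem~\ref{thm:SchurPolynomials} to convert each side of the claimed equivalence into the vanishing of a single Schur polynomial evaluated at an explicit tuple of $d$-th roots of unity, and then to observe that the two resulting $k\times k$ determinants are transposes of one another. Throughout write $\mu:=\lambda'$, and assume --- as is implicit in the statement --- that $g$ is large enough that $U_\lambda^{g,k}$ and $U_\mu^{g,k}$ are both defined, i.e.\ $\lambda_1,\mu_1\le g$.

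First I would record the reduction. Set $\alpha_i=\lambda_i+k-i-1$ and $\beta_i=\mu_i+k-i-1$ for $i=1,\dots,k$, so that $\eta_{d,k}(\lambda)=\{\zeta_d^{\alpha_i}\}$ and $\eta_{d,k}(\mu)=\{\zeta_d^{\beta_i}\}$; since $\gamma(\lambda)$ and $\gamma(\mu)$ are honest elements of $N_{d,k}$, each of these is a set of $k$ distinct points, so the Vandermonde determinants $V_\lambda=\prod_{i<j}(\zeta_d^{\alpha_i}-\zeta_d^{\alpha_j})$ and $V_\mu$ are nonzero. The configuration with roots $[\zeta_d^{\alpha_1}:1],\dots,[\zeta_d^{\alpha_k}:1]$ lies on the curve $C_{\gamma(\lambda)}$, and at it $e_0=y_1\cdots y_k=1\ne 0$; since $C_{\gamma(\lambda)}$ is contained in a single matroid stratum, Theorem~\ref{thm:SchurPolynomials} gives that $\gamma(\lambda)\in\mathcal D(U_\mu^{g,k})=V(e_0^{\,g-\mu_1}s_\mu)$ if and only if $s_\mu(\zeta_d^{\alpha_1},\dots,\zeta_d^{\alpha_k})=0$. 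By the bialternant formula recalled just before the statement, $s_\mu(\zeta_d^{\alpha_1},\dots,\zeta_d^{\alpha_k})=V_\lambda^{-1}\det\!\big((\zeta_d^{\,\alpha_j\beta_i})_{i,j=1}^k\big)$, so the condition is $\det\!\big((\zeta_d^{\,\alpha_j\beta_i})_{i,j=1}^k\big)=0$. Exchanging the roles of $\lambda$ and $\mu$, the same argument shows $\gamma(\mu)\in\mathcal D(U_\lambda^{g,k})$ if and only if $\det\!\big((\zeta_d^{\,\beta_j\alpha_i})_{i,j=1}^k\big)=0$.

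Finally I would close the argument: the matrix $(\zeta_d^{\alpha_j\beta_i})_{i,j}$ has $(i,j)$-entry $\zeta_d^{\beta_i\alpha_j}$, whereas $(\zeta_d^{\beta_j\alpha_i})_{i,j}$ has $(i,j)$-entry $\zeta_d^{\alpha_i\beta_j}$, so the second is the transpose of the first; hence the two determinants agree, and the equivalence follows. I do not expect a serious obstacle: essentially all of the content is in the reduction, and the only things that need genuine care are the index bookkeeping that makes the two exponent-matrices \emph{exact} transposes --- concretely, that the shift $i\mapsto k-i-1$ used to form $\eta_{d,k}$ is the same one appearing in the exponents of the bialternant formula --- and the observation that the Vandermonde denominators $V_\lambda,V_\mu$ are invertible, which is precisely where the hypothesis $\gamma(\lambda),\gamma(\mu)\in N_{d,k}$ enters.
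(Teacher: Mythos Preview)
Your proof is correct and follows essentially the same route as the paper: the paper's one-line argument ``This follows immediately from $\det(A)=\det(A^T)$'' is exactly your final step, and your reduction via Theorem~\ref{thm:SchurPolynomials} and the bialternant formula just makes explicit the setup the paper records in the paragraph immediately preceding the proposition. The only difference is that you spell out the details (nonvanishing of $e_0$ and the Vandermonde factors, the precise index bookkeeping) that the paper leaves implicit.
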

\begin{proof}
  This follows immediately from $\det(A)=\det(A^T).$
\end{proof}
Answering Question \ref{q:MGammaAlgorithm} now boils down to:
\begin{question}
  Let $\gamma_1$ and $\gamma_2$ be necklaces. We choose
  identifications of $\gamma_1$ and $\gamma_2$ with $k$-element
  subsets $\{\gamma_{1,i}\}$ and $\{\gamma_{2,j}\}$ of $\Z/d\Z$. Is
  there a combinatorial algorithm to determine whether the determinant
  $D(\gamma_1,\gamma_2):=\det\left((\zeta_d^{\gamma_{1,i}\gamma_{2,j}})_{i,j=1}^k\right)$
  vanishes?
\end{question}
\begin{rem}
  Experimentally, one may find sufficient conditions for the vanishing
  of the above determinant. In particular, one may prove a statement
  of the following form: if $a$ divides $d$, and the $k$ black beads
  of $\gamma_1$ are distributed ``sufficiently unequally'' among the
  $\mu_a$-orbits of the $d$th roots of unity, \textit{and} the $k$
  black beads of $\gamma_2$ are distributed ``sufficiently unequally''
  among the $\mu_{d/a}$-orbits of the $d$th roots of unity, then
  $D(\gamma_1,\gamma_2)=0.$ However, we do not know of any necessary
  conditions; an additional idea would be needed to prove that any
  determinants are \textit{nonzero}.
\end{rem}

\section{Equivariant structure of $\Hilb_{\mathbf{a}}^h(\A^r)$ and the
  $T$-graph problem}\label{sec:Orbits}
In this section, we rephrase some of the preceding material in terms
of torus actions. Let $\mathbbm{k}$ be algebraically closed in this
section. The condition of $\mathbf{a}$-homogeneity for an ideal $I$ is
equivalent to the invariance of $I$ under the action of a certain
subtorus of $T:=(\mathbbm{k}^*)^r$ --- specifically, the image of the
homomorphism $(\mathbbm{k}^*)^b\to(\mathbbm{k}^*)^r$ defined by the
matrix of exponents $\mathbf{a}$.
\begin{ex} The ideal
  $(x^2y+z^6)\subseteq \mathbbm{k}[x,y,z]$ is homogeneous with respect
  to the grading $((3,0),(0,6),(1,1))$. This ideal is invariant under
  elements of $T=(\mathbbm{k}^*)^3$ of the form
  $(\lambda_1^3,\lambda_2^6,\lambda_1\lambda_2),$ which act on the
  polynomial $x^2y+z^6$ by multiplication by $\lambda_1^6\lambda_2^6.$
\end{ex}
The torus $T$ acts on $\Hilb^h_{\mathbf{a}}(\A^r)$ with stabilizers
isomorphic to $T/(\mathbbm{k}^*)^b$, so the dimension of any $T$-orbit
$T\cdot I$ is at most $r-b$ (by nondegeneracy). There is a
stratification of $\Hilb^h_{\mathbf{a}}(\A^r)$ by $T$-orbit dimension;
it is easy to check that the (finite set of) monomial ideals with
graded Hilbert function $h$ are the $T$-fixed points.

If $b=r-1,$ then for every $h$ and $\mathbf{a},$ every $T$-orbit in
$\Hilb^h_{\mathbf{a}}(\A^r)$ is either a monomial ideal or is
isomorphic to $T/(\mathbbm{k}^*)^b\cong\mathbbm{k}^*.$ Each
1-dimensional orbit $T\cdot I$ has two ``endpoints;'' these are
initial monomial ideals of $I$ with respect to appropriate term
orders. (When $r=2,$ the two term orders are $x>y$ and $y>x.$ See
\cite{HeringMaclagan2012}.)
\begin{obs}
  Since $\Trop(I)$ is defined in terms of supports of polynomials, and
  $T$ acts by multiplying coefficients by nonzero scalars, we always
  have $\Trop(T\cdot I)=\Trop(I)$. In particular, every dependence
  locus and stratum of the matroid stratification is
  $T$-invariant. As initial ideals $\init_{\prec}(I)$ are defined via
  supports of polynomials, they are recoverable from
  $\Trop(I)$, as in the following definition.
\end{obs}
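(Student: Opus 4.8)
The plan is to treat the three assertions in order; each comes down to unwinding definitions, with Lemma \ref{lem:UnionOfCircuits} supplying the only nontrivial input, which is needed for the last one. I would begin with $\Trop(T\cdot I)=\Trop(I)$. An element $t=(t_1,\ldots,t_r)\in T$ acts on $R$ by $x_i\mapsto t_ix_i$, hence rescales each monomial $m$ by a nonzero scalar, so that $\supp(t\cdot f)=\supp(f)$ for every $f\in R$; consequently, for each degree $d$, the subspaces $(t\cdot I)_d=t\cdot(I_d)$ and $I_d$ of $\mathbbm{k}^{\Mon_d}$ share exactly the same collection of supports of elements. It then suffices to observe that for a subspace $V\subseteq\mathbbm{k}^E$ the matroid $\Trop(V)$ depends on $V$ only through the numbers $\dim(\mathbbm{k}^S/V\cap\mathbbm{k}^S)$, and that any coordinatewise rescaling automorphism of $\mathbbm{k}^E$ preserves each coordinate subspace $\mathbbm{k}^S$ and hence those dimensions; applying this with $E=\Mon_d$ and $V=I_d$ yields $\Trop((t\cdot I)_d)=\Trop(I_d)$ for all $d$.

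The $T$-invariance of dependence loci and strata is then formal. By construction $\mathcal{D}(U)$ is the locus of $I$ for which $U$ is dependent in $\Trop(I)_d$, and that condition is unchanged under the $T$-action by the previous paragraph, so $\mathcal{D}(U)$ is $T$-stable; since each matroid stratum $\mathcal{S}(\mathscr M)$ is a (possibly infinite) intersection of dependence loci and complements of dependence loci, it is $T$-stable as well. If one wants scheme-theoretic invariance rather than just invariance of the underlying set, I would note that the section $\sigma_U$ is $T$-equivariant up to the character through which $T$ acts on $\bigwedge^{\abs{U}}R_d$, so its zero scheme is $T$-invariant.

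For the last assertion, fix a term order $\prec$. By standard Gr\"obner theory, a monomial $m\in\Mon_d$ lies in $\init_\prec(I)$ if and only if $m=\init_\prec(f)=\max_\prec(\supp f)$ for some nonzero homogeneous $f\in I_d$, so that $\init_\prec(I)_d=\{\max_\prec(\supp f):0\ne f\in I_d\}$. By the second bullet of Lemma \ref{lem:UnionOfCircuits} every such support is a nonempty cycle of $\Trop(I)_d$, and a cycle is a union of circuits one of which must contain --- and hence have as its $\prec$-maximum --- the $\prec$-maximum of the whole cycle; conversely, by the first bullet every circuit of $\Trop(I)_d$ is itself the support of some element of $I_d$. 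Therefore $\init_\prec(I)_d=\{\max_\prec(C):C\text{ a circuit of }\Trop(I)_d\}$, an expression that manifestly depends only on $\Trop(I)$, which recovers the initial ideal degree by degree and is the content of the definition to follow.

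I do not expect a genuine obstacle. The one place that requires care is the bookkeeping in the last step --- matching the set of $\prec$-maxima of supports with the set of $\prec$-maxima of circuits --- which uses both bullets of Lemma \ref{lem:UnionOfCircuits} together with the fact that a term order picks out a unique maximum from any finite set of monomials. One should also remember that over a finite or non-closed field only the ``circuits are supports'' direction of the lemma is available, but, as the argument shows, that direction is all that is needed.
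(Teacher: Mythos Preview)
Your proposal is correct and follows the same line of reasoning the paper gestures at; note that the paper gives no separate proof, treating the observation as self-evident from the embedded justifications, so your argument is simply a careful unwinding of that sketch. One minor convention mismatch: the definition that follows in the paper records $\min_{\preceq}$ of each circuit rather than $\max_{\prec}$, but your argument goes through verbatim with either choice.
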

\begin{Def}
  Let $M=(E,r)$ be a matroid, and let $\preceq$ be a total ordering on
  $E$. The \textit{initial matroid} of $M$ with respect to $\preceq$
  is the matroid $\init_{\preceq}(M)=(E,r')$ whose circuits are
  $\{\min_{\preceq}(c):\text{$c$ a circuit of $M$}\}.$ (It is a
  straightforward exercise to check that these circuits define a
  matroid. In fact, $\init_{\preceq}(M)$ is a \textit{discrete
    matroid}, i.e. every element of $E$ is either a loop or a coloop of
  $\init_{\preceq}(M)$.)

  If $\mathscr{M}$ is a tropical ideal, and $\preceq$ is a monomial
  order, then the \textit{initial tropical ideal} of $M$ is defined by
  $\init_{\preceq}(\mathscr{M})_d=(\init_{\preceq}(\mathscr{M}_d)).$
\end{Def}
\begin{notation}\label{not:ColorCode}
  When $r=2,$ there is a natural term order $x\preceq y$. When drawing
  a tropical ideal $\mathscr{M}$, we color-code each monomial $m$ as
  follows:
  \begin{itemize}
  \item Blue and horizontally striped if $m$ is \textit{not} a circuit of
    $\init_{\preceq}(\mathscr{M})$, and
  \item Red and vertically striped if $m$ is \textit{not} a circuit of
    $\init_{\succeq}(\mathscr{M})$.
  \end{itemize}
  More simply, a box is blue if it does not contain the bottom-right-most
  dot of any line segment, and red if it does not contain the
  top-left-most dot of any line segment.
\end{notation}

The \textit{$T$-graph problem} (see \cite{AltmannSturmfels2005}) asks
which pairs of $T$-fixed points in $\Hilb^h_{\mathbf{a}}(\A^r)$ are
endpoints of a 1-dimensional $T$-orbit. The problem has been studied
extensively
\cite{Iarrobino1972,Evain2002,Evain2004,AltmannSturmfels2005,HeringMaclagan2012}. An
algebraic algorithm via Gr\"{o}bner theory for generating the
$T$-graph was given in \cite{AltmannSturmfels2005} and later
implemented as the \texttt{TEdges} Macaulay2 package
\cite{Maclagan2009}; given two monomial ideals $M_1$ and $M_2,$ the
algorithm produces equations that cut out the ``edge scheme''
$E(M_1,M_2)\subseteq\Hilb^h_{\mathbf{a}}(\A^r)$ consisting of ideals
$I$ such that $M_1$ and $M_2$ are the endpoints of $T\cdot I.$ By the
observation above, $E(M_1,M_2)$ is a union of matroid strata.

Recall (Example \ref{ex:FiniteEmbed}) that if $h$ has finitely many
nonzero entries, there is an (equivariant) embedding
$\Hilb_{\mathbf{a}}^h(\A^r)\into(\A^r)^{\left[\sum_dh(d)\right]}.$ In
particular, $(\A^r)^{[n]}$ contains every multigraded Hilbert scheme
$\Hilb_{\mathbf{a}}^h(\A^r),$ where $\sum_dh(d)=n$ and $\mathbf{a}$
is arbitrary, as a closed subscheme. If $r=2,$ then any non-monomial
ideal is homogeneous with respect to at most one positive
grading. Thus the $T$-graph problem for $(\A^2)^{[n]}$ is equivalent
to the $T$-graph problem for every multigraded Hilbert scheme
$\Hilb_{\mathbf{a}}^h(\A^2),$ where $\sum_dh(d)=n$.

\begin{ex}
  In Figure \ref{fig:pictures}, the colored boxes signify that the two
  endpoints of $T\cdot I$ are $(x^5,x^3y^2,y^3)$ and $(x^3,xy^4,y^5).$
  Thus $I$ is a point of the edge scheme
  $E\left(\raisebox{-3pt}{
    \begin{tikzpicture}[scale=0.1]
      \filldraw[white] (-.2,-.2) -- (5.2,-.2) -- (5.2,4.2) -- (-.2,4.2) -- cycle;
      \draw (0,0) -- (1,0) -- (1,1) -- (0,1) -- cycle;
      \draw (1,0) -- (2,0) -- (2,1) -- (1,1) -- cycle;
      \draw (2,0) -- (3,0) -- (3,1) -- (2,1) -- cycle;
      \draw (3,0) -- (4,0) -- (4,1) -- (3,1) -- cycle;
      \draw (4,0) -- (5,0) -- (5,1) -- (4,1) -- cycle;
      \draw (0,1) -- (1,1) -- (1,2) -- (0,2) -- cycle;
      \draw (1,1) -- (2,1) -- (2,2) -- (1,2) -- cycle;
      \draw (2,1) -- (3,1) -- (3,2) -- (2,2) -- cycle;
      \draw (3,1) -- (4,1) -- (4,2) -- (3,2) -- cycle;
      \draw (4,1) -- (5,1) -- (5,2) -- (4,2) -- cycle;
      \draw (0,2) -- (1,2) -- (1,3) -- (0,3) -- cycle;
      \draw (1,2) -- (2,2) -- (2,3) -- (1,3) -- cycle;
      \draw (2,2) -- (3,2) -- (3,3) -- (2,3) -- cycle;
    \end{tikzpicture}
  },\raisebox{-5pt}{
    \begin{tikzpicture}[scale=0.1]
      \filldraw[white] (-.2,-.2) -- (3.2,-.2) -- (3.2,5.2) -- (-.2,5.2) -- cycle;
      \draw (0,0) -- (1,0) -- (1,1) -- (0,1) -- cycle;
      \draw (1,0) -- (2,0) -- (2,1) -- (1,1) -- cycle;
      \draw (2,0) -- (3,0) -- (3,1) -- (2,1) -- cycle;
      \draw (0,1) -- (1,1) -- (1,2) -- (0,2) -- cycle;
      \draw (1,1) -- (2,1) -- (2,2) -- (1,2) -- cycle;
      \draw (2,1) -- (3,1) -- (3,2) -- (2,2) -- cycle;
      \draw (0,2) -- (1,2) -- (1,3) -- (0,3) -- cycle;
      \draw (1,2) -- (2,2) -- (2,3) -- (1,3) -- cycle;
      \draw (2,2) -- (3,2) -- (3,3) -- (2,3) -- cycle;
      \draw (0,3) -- (1,3) -- (1,4) -- (0,4) -- cycle;
      \draw (1,3) -- (2,3) -- (2,4) -- (1,4) -- cycle;
      \draw (2,3) -- (3,3) -- (3,4) -- (2,4) -- cycle;
      \draw (0,4) -- (1,4) -- (1,5) -- (0,5) -- cycle;
    \end{tikzpicture}
  } \right)\subseteq\Hilb_{(1,1)}^{(1,2,3,3,3,1,0,\ldots)}(\A^2)\subseteq(\A^2)^{[13]}.$ 
\end{ex}

\begin{ex}\label{ex:Hilb12211}
  One may compute using \texttt{TEdges} that
  $\Hilb^{(1,2,2,1,1,0,0,\ldots)}_{(1,1)}(\A^2)$ is equivariantly
  isomorphic to $\P^1\times\P^1$ with the diagonal action of
  $\mathbbm{k}^*$. On the left in Figure \ref{fig:Hilb12211} is its
  $T$-graph, drawn in thick black lines, and on the right is the
  $T$-graph of $(\A^2)^{[7]},$ with the corresponding edges
  thickened. The gray curves on the left are intended to depict the
  1-dimensional family of $T$-orbits that correspond to the single
  diagonal edge in the $T$-graph. (The edges of the outer rectangle
  correspond to single 1-dimensional $T$-orbits.) There are ten
  matroid strata:
  \begin{itemize}
  \item The four monomial ideals,
  \item The four black edges of the outer rectangle,
  \item The red curve, representing ideals of the form
    $(x^2-c^2y^2,xy^2+cy^3,y^5)$, and
  \item The open stratum, representing ideals of the form
    $(x^2+c_1xy+(c_1c_2-c_2^2)y^2,xy^2+c_2y^3,y^5)$ with $c_1\ne0$.
  \end{itemize}
      \begin{figure}
        \centering
        \begin{tikzpicture}[scale=.8]
          \draw[thick] (0,0) -- (2,-2) -- (6,0);
          \draw[thick] (0,0) -- (4,2) -- (6,0);
          \foreach \x in {-1,-2,...,-9} {
            \draw[opacity=.15] (0,0) to[out=5*\x,in=180-2.86*\x] (6,0);
            \draw[opacity=.15] (6,0) to[out=180+5*\x,in=-2.86*\x] (0,0);
          };
          \draw[thick] (0,0)--(6,0);
          \draw[red] (0,0) to[out=-5*3,in=180+2.86*3] (6,0);
          \draw (0,0) node {
            \begin{tikzpicture}[scale=0.15]
              \filldraw[white] (-.9,-.9) -- (5.9,-.9) -- (5.9,1.9) --
              (2.9,1.9) -- (2.9,2.9) -- (-.9,2.9) -- cycle;
              \draw (0,0) -- (1,0) -- (1,1) -- (0,1) -- cycle;
              \draw (1,0) -- (2,0) -- (2,1) -- (1,1) -- cycle;
              \draw (2,0) -- (3,0) -- (3,1) -- (2,1) -- cycle;
              \draw (3,0) -- (4,0) -- (4,1) -- (3,1) -- cycle;
              \draw (4,0) -- (5,0) -- (5,1) -- (4,1) -- cycle;
              \draw (0,1) -- (1,1) -- (1,2) -- (0,2) -- cycle;
              \draw (1,1) -- (2,1) -- (2,2) -- (1,2) -- cycle;
            \end{tikzpicture}
          };
          \draw (2,-2) node {
            \begin{tikzpicture}[scale=0.15]
              \filldraw[white] (-.9,-.9) -- (5.9,-.9) -- (5.9,2.1) --
              (1.9,2.1) -- (1.9,3.9) -- (-.9,3.9) -- cycle;
              \draw (0,0) -- (1,0) -- (1,1) -- (0,1) -- cycle;
              \draw (1,0) -- (2,0) -- (2,1) -- (1,1) -- cycle;
              \draw (2,0) -- (3,0) -- (3,1) -- (2,1) -- cycle;
              \draw (3,0) -- (4,0) -- (4,1) -- (3,1) -- cycle;
              \draw (4,0) -- (5,0) -- (5,1) -- (4,1) -- cycle;
              \draw (0,1) -- (1,1) -- (1,2) -- (0,2) -- cycle;
              \draw (0,2) -- (1,2) -- (1,3) -- (0,3) -- cycle;
            \end{tikzpicture}
          };
          \draw (4,2) node {
            \begin{tikzpicture}[scale=0.15]
              \filldraw[white] (-.9,-.5) -- (3.9,-.5) -- (3.9,5.9) -- (-.9,5.9) -- cycle;
              \draw (0,0) -- (1,0) -- (1,1) -- (0,1) -- cycle;
              \draw (1,0) -- (2,0) -- (2,1) -- (1,1) -- cycle;
              \draw (2,0) -- (3,0) -- (3,1) -- (2,1) -- cycle;
              \draw (0,1) -- (1,1) -- (1,2) -- (0,2) -- cycle;
              \draw (0,2) -- (1,2) -- (1,3) -- (0,3) -- cycle;
              \draw (0,3) -- (1,3) -- (1,4) -- (0,4) -- cycle;
              \draw (0,4) -- (1,4) -- (1,5) -- (0,5) -- cycle;
            \end{tikzpicture}
          };
          \draw (6,0) node {
            \begin{tikzpicture}[scale=0.15]
              \filldraw[white] (-.9,-.9) -- (2.9,-.9) -- (2.9,5.9) -- (-.9,5.9) -- cycle;
              \draw (0,0) -- (1,0) -- (1,1) -- (0,1) -- cycle;
              \draw (1,0) -- (2,0) -- (2,1) -- (1,1) -- cycle;
              \draw (0,1) -- (1,1) -- (1,2) -- (0,2) -- cycle;
              \draw (1,1) -- (2,1) -- (2,2) -- (1,2) -- cycle;
              \draw (0,2) -- (1,2) -- (1,3) -- (0,3) -- cycle;
              \draw (0,3) -- (1,3) -- (1,4) -- (0,4) -- cycle;
              \draw (0,4) -- (1,4) -- (1,5) -- (0,5) -- cycle;
            \end{tikzpicture}
          };
        \end{tikzpicture}
        \quad
        \includegraphics[width=4in]{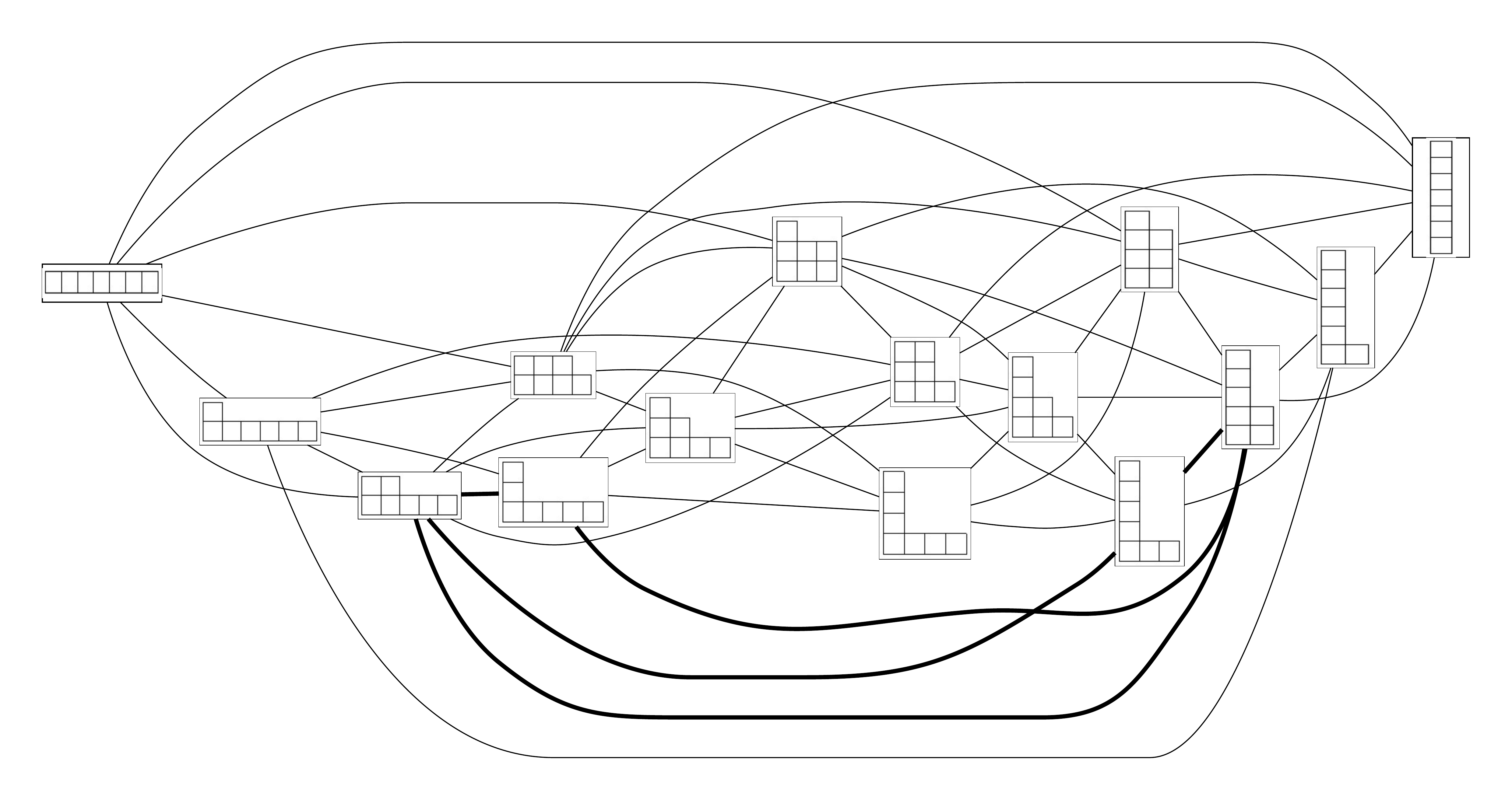}
        \caption{The $T$-graphs of
          $\Hilb_{(1,1)}^{(1,2,2,1,1,0,\ldots)}$ and $(\A^2)^{[7]}$}
        \label{fig:Hilb12211}
      \end{figure}
    \end{ex}
    By viewing $T$-orbit-closures as rational curves in
    $\Hilb^h_{\mathbf{a}}(\A^r)$, and using machinery of
    \textit{unbroken stable maps} \cite{OkounkovPandharipande2010},
    one may associated to each multigraded Hilbert scheme (or edge
    scheme, or intersection of dependence loci) a moduli space
    $\Mbar_{\mathbf{a}}^h(\A^r),$ which roughly parametrizes
    $T$-orbit-closures and their degenerations. (More specifically,
    the moduli space parametrizes $T$-invariant maps
    $f:C\to\Hilb^h_{\mathbf{a}}(\A^r)$, possibly ramified, from nodal
    rational curves to $\Hilb^h_{\mathbf{a}}(\A^r)$, such that $f$ is
    locally $T$-equivariantly smoothable at every node of $C$.)
    \begin{ex}
      In Example \ref{ex:Hilb12211},
      $\Mbar_{(1,1)}^{(1,2,2,1,1,0,\ldots)}(\A^2)
      \cong\P^1,$ with two points corresponding to the two
      degenerations of orbits into nodal rational curves (unions of
      orbits). Another example
      is 
      $\Mbar_{(1,1)}^{(1,2,1,0,\ldots)}(\A^2)
      \cong\P(2,1),$ a weighted projective stack. The orbifold point
      corresponds to a family of 1-dimensional orbits whose limit is a
      doubled line. (In fact, $\Hilb_{(1,1)}^{(1,2,1,0,\ldots)}$ is
      equivariantly isomorphic to $\P^2$ with the
      $\mathbbm{k}^*$-action
      $\lambda\cdot[x:y:z]=[\lambda x:\lambda^{-1}y:z].$ The orbits
      are conics $xy=cz^2$, and the doubled line above is the limit
      $c\to\infty$.)
    \end{ex}

\begin{ex}
  Consider again Example \eqref{eq:HilbPrin}. and let
  $\gamma\in N_{d,k}.$ Suppose $\gamma$ has order $d'$ rotational
  symmetry, where $d'|d.$. Then the element $\zeta_d^{d/d'}\in\C^*=T$
  acts trivially on the $T$-orbit in
  $\mathcal{D}(\{x^d,y^d\})\subseteq(\P^1)^{[k]}$ associated to
  $\gamma$, and $T$ acts with weight $d'$ on this orbit. It follows
  that the moduli space $\Mbar$ associated to
  $\mathcal{D}(\{x^d,y^d\})$ contains a single orbifold point with
  isotropy group $\Z/d'\Z$ corresponding to $\gamma.$ Altogether,
  $\Mbar$ is isomorphic (as a stack) to the \textit{moduli space of
    necklaces}
  $\mathcal{N}_{d,k}=\left[\binom{\{1,\ldots,d\}}{k}/(\Z/d\Z)\right].$
\end{ex}
\begin{ex}
  Using \texttt{TEdges}, we compute that the moduli space associated
  to the $E\left(\raisebox{-4pt}{
    \begin{tikzpicture}[scale=0.1]
      \filldraw[white] (-.2,-.2) -- (5.2,-.2) -- (5.2,4.2) -- (-.2,4.2) -- cycle;
      \draw (0,0) -- (1,0) -- (1,1) -- (0,1) -- cycle;
      \draw (1,0) -- (2,0) -- (2,1) -- (1,1) -- cycle;
      \draw (2,0) -- (3,0) -- (3,1) -- (2,1) -- cycle;
      \draw (3,0) -- (4,0) -- (4,1) -- (3,1) -- cycle;
      \draw (4,0) -- (5,0) -- (5,1) -- (4,1) -- cycle;
      \draw (0,1) -- (1,1) -- (1,2) -- (0,2) -- cycle;
      \draw (1,1) -- (2,1) -- (2,2) -- (1,2) -- cycle;
      \draw (0,2) -- (1,2) -- (1,3) -- (0,3) -- cycle;
      \draw (0,3) -- (1,3) -- (1,4) -- (0,4) -- cycle;
    \end{tikzpicture}
  },\raisebox{-5pt}{
    \begin{tikzpicture}[scale=0.1]
      \filldraw[white] (-.2,-.2) -- (3.2,-.2) -- (3.2,5.2) -- (-.2,5.2) -- cycle;
      \draw (0,0) -- (1,0) -- (1,1) -- (0,1) -- cycle;
      \draw (1,0) -- (2,0) -- (2,1) -- (1,1) -- cycle;
      \draw (2,0) -- (3,0) -- (3,1) -- (2,1) -- cycle;
      \draw (0,1) -- (1,1) -- (1,2) -- (0,2) -- cycle;
      \draw (1,1) -- (2,1) -- (2,2) -- (1,2) -- cycle;
      \draw (2,1) -- (3,1) -- (3,2) -- (2,2) -- cycle;
      \draw (0,2) -- (1,2) -- (1,3) -- (0,3) -- cycle;
      \draw (0,3) -- (1,3) -- (1,4) -- (0,4) -- cycle;
      \draw (0,4) -- (1,4) -- (1,5) -- (0,5) -- cycle;
    \end{tikzpicture}
  } \right)$ is a single point. 
However, this moduli space has ``empty interior,'' in the sense that $E\left(\raisebox{-4pt}{
    \begin{tikzpicture}[scale=0.1]
      \filldraw[white] (-.2,-.2) -- (5.2,-.2) -- (5.2,4.2) -- (-.2,4.2) -- cycle;
      \draw (0,0) -- (1,0) -- (1,1) -- (0,1) -- cycle;
      \draw (1,0) -- (2,0) -- (2,1) -- (1,1) -- cycle;
      \draw (2,0) -- (3,0) -- (3,1) -- (2,1) -- cycle;
      \draw (3,0) -- (4,0) -- (4,1) -- (3,1) -- cycle;
      \draw (4,0) -- (5,0) -- (5,1) -- (4,1) -- cycle;
      \draw (0,1) -- (1,1) -- (1,2) -- (0,2) -- cycle;
      \draw (1,1) -- (2,1) -- (2,2) -- (1,2) -- cycle;
      \draw (0,2) -- (1,2) -- (1,3) -- (0,3) -- cycle;
      \draw (0,3) -- (1,3) -- (1,4) -- (0,4) -- cycle;
    \end{tikzpicture}
  },\raisebox{-5pt}{
    \begin{tikzpicture}[scale=0.1]
      \filldraw[white] (-.2,-.2) -- (3.2,-.2) -- (3.2,5.2) -- (-.2,5.2) -- cycle;
      \draw (0,0) -- (1,0) -- (1,1) -- (0,1) -- cycle;
      \draw (1,0) -- (2,0) -- (2,1) -- (1,1) -- cycle;
      \draw (2,0) -- (3,0) -- (3,1) -- (2,1) -- cycle;
      \draw (0,1) -- (1,1) -- (1,2) -- (0,2) -- cycle;
      \draw (1,1) -- (2,1) -- (2,2) -- (1,2) -- cycle;
      \draw (2,1) -- (3,1) -- (3,2) -- (2,2) -- cycle;
      \draw (0,2) -- (1,2) -- (1,3) -- (0,3) -- cycle;
      \draw (0,3) -- (1,3) -- (1,4) -- (0,4) -- cycle;
      \draw (0,4) -- (1,4) -- (1,5) -- (0,5) -- cycle;
    \end{tikzpicture}
  } \right)$ is actually empty, and the point in question corresponds
to the nodal union of two $T$-orbits, with the node mapping to
$ \raisebox{-5pt}{\begin{tikzpicture}[scale=0.1] \filldraw[white]
    (-.2,-.2) -- (4.2,-.2) -- (4.2,5.2) -- (-.2,5.2) -- cycle; \draw
    (0,0) -- (1,0) -- (1,1) -- (0,1) -- cycle; \draw (1,0) -- (2,0) --
    (2,1) -- (1,1) -- cycle; \draw (2,0) -- (3,0) -- (3,1) -- (2,1) --
    cycle; \draw (3,0) -- (4,0) -- (4,1) -- (3,1) -- cycle; \draw
    (0,1) -- (1,1) -- (1,2) -- (0,2) -- cycle; \draw (1,1) -- (2,1) --
    (2,2) -- (1,2) -- cycle; \draw (0,2) -- (1,2) -- (1,3) -- (0,3) --
    cycle; \draw (0,3) -- (1,3) -- (1,4) -- (0,4) -- cycle; \draw
    (0,4) -- (1,4) -- (1,5) -- (0,5) -- cycle;
    \end{tikzpicture}}.$
\end{ex}

\begin{question}
  The moduli spaces defined above have essentially not been
  studied. We ask, for example: Is $\Mbar_{\mathbf{a}}^h(\A^2)$ smooth
  (as a stack) for all $\mathbf{a}$ and $h$? Rational? What about the
  moduli spaces associated to edge schemes? (From Example
  \ref{ex:Necklace1}, these may be disconnected.)

  Note that in light of Mn\"{e}v's universality theory, the moduli
  spaces associated to arbitrary matroid strata-closures are expected
  to be arbitrarily badly-behaved.
\end{question}

\section{Applications to finite-length Hilbert
  schemes}\label{sec:MonoprincipalIdeals}
Finally, we give a way to apply Theorem \ref{thm:SchurPolynomials} to
the $T$-graph problem for $\Hilb_{\mathbf{a}}^h(\A^2).$ First we need
the following, which is quite useful for working with initial ideals.
\begin{lem}\label{lem:ConvexHull}
  Let $M=(E,r_M)$ be a matroid, and let $\preceq$ be a total order on
  $E$. Let $B_{\preceq}(M)$ be the set of coloops of the (discrete)
  initial matroid $\init_{\preceq}(M)$ (in other words,
  $B_{\preceq}(M)$ is the unique basis for $\init_{\preceq}(M)$), and
  let $B_{\succeq}(M)$ be the set of coloops of $\init_{\succeq}(M)$.

  Let $m\in E,$ and suppose
  \begin{align}\label{eq:MovePast}
    \abs{\{m'\in B_\preceq(M):m'\preceq
    m\}}-&\abs{\{m'\in B_\succeq(M):m'\preceq m\}}\\
    &\quad\quad\le\abs{\{m'\in B_\preceq(M):m'\succeq m\}}-\abs{\{m'\in B_\succeq(M):m'\succeq m\}}.\nonumber
  \end{align}
  Then $m$ is either a loop or a coloop of $M$.
\end{lem}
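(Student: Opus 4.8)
The plan is to translate condition~\eqref{eq:MovePast} into a statement about the rank function $r_M$, and then finish with a short two-case argument using only submodularity and the greedy algorithm for matroids.

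First I would unwind the two canonical bases concretely. List $E=\{e_1\prec e_2\prec\cdots\prec e_n\}$ and set $\rho(i)=r_M(\{e_1,\ldots,e_i\})$ and $\sigma(i)=r_M(\{e_i,\ldots,e_n\})$, with the conventions $\rho(0)=\sigma(n+1)=0$. An element $e$ is a loop of $\init_\preceq(M)$ exactly when $e=\min_\preceq(c)$ for some circuit $c$ of $M$, which happens exactly when $e\in\operatorname{cl}_M(\{e':e'\succ e\})$; hence $B_\preceq(M)$ is precisely the basis produced by the matroid greedy algorithm run through $E$ from $\preceq$-largest to $\preceq$-smallest, and symmetrically $B_\succeq(M)$ is the greedy basis built from $\preceq$-smallest to $\preceq$-largest. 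In particular both are bases of $M$, and the defining property of the greedy algorithm gives, for every $i$, that $\abs{B_\succeq(M)\cap\{e_1,\ldots,e_i\}}=\rho(i)$ and $\abs{B_\preceq(M)\cap\{e_i,\ldots,e_n\}}=\sigma(i)$.

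Next, fixing $m=e_i$ and using $\abs{B_\preceq(M)}=\abs{B_\succeq(M)}=r_M(E)$, I would substitute these formulas into~\eqref{eq:MovePast}. After simplification the condition forces both of the expressions $\rho(i)+\sigma(i+1)-r_M(E)$ and $\rho(i-1)+\sigma(i)-r_M(E)$ to vanish; both are $\ge 0$ by submodularity of $r_M$ applied to the complementary pair $\{e_1,\ldots,e_j\}$, $\{e_{j+1},\ldots,e_n\}$, so~\eqref{eq:MovePast} gives $\rho(i-1)+\sigma(i)=\rho(i)+\sigma(i+1)=r_M(E)$. Subtracting the two equalities shows $\rho(i)-\rho(i-1)=\sigma(i)-\sigma(i+1)$; each side lies in $\{0,1\}$, so both equal a common value $\varepsilon\in\{0,1\}$. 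Note that $\rho(i)-\rho(i-1)=1$ iff $m\in B_\succeq(M)$ and $\sigma(i)-\sigma(i+1)=1$ iff $m\in B_\preceq(M)$, so $\varepsilon$ simply records whether $m$ lies in both canonical bases ($\varepsilon=1$) or in neither ($\varepsilon=0$).

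Finally I would split on $\varepsilon$. If $\varepsilon=0$, then $m$ raises the rank of neither $\{e_1,\ldots,e_{i-1}\}$ nor $\{e_{i+1},\ldots,e_n\}$, so $m$ lies in the flats $F_1:=\operatorname{cl}_M(\{e_1,\ldots,e_{i-1}\})$ and $F_2:=\operatorname{cl}_M(\{e_{i+1},\ldots,e_n\})$; moreover $\rho(i-1)+\sigma(i+1)=r_M(E)$, and $F_1\cup F_2$ contains $E\setminus\{m\}$, which has full rank $r_M(E)$ (adding $m\in F_1$ back changes nothing), so submodularity gives $r_M(F_1\cap F_2)\le r_M(F_1)+r_M(F_2)-r_M(F_1\cup F_2)=\rho(i-1)+\sigma(i+1)-r_M(E)=0$, whence $m$ is a loop. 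If $\varepsilon=1$, then $m$ raises the rank of both sets, so $\rho(i-1)+\sigma(i+1)=r_M(E)-1$; since $E\setminus\{m\}=\{e_1,\ldots,e_{i-1}\}\sqcup\{e_{i+1},\ldots,e_n\}$, subadditivity gives $r_M(E\setminus\{m\})\le r_M(E)-1$, and as deleting one element drops the rank by at most $1$ this is an equality, so $m$ is a coloop. I expect the only genuine obstacle to be the bookkeeping in the middle step — verifying that~\eqref{eq:MovePast}, after the substitution, is exactly the vanishing of those two nonnegative expressions, and in particular handling the boundary cases $m=e_1$ and $m=e_n$, where one of the complementary sets is empty and its expression vanishes automatically; the matroid facts invoked (greedy algorithm, submodularity of rank) are standard.
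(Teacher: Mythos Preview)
Your overall strategy is the same as the paper's: rewrite each of the four cardinalities in \eqref{eq:MovePast} in terms of the rank function via the greedy characterization of $B_\preceq(M)$ and $B_\succeq(M)$, observe that the resulting inequality pins two nonnegative quantities to zero, and read off that $M$ splits as a direct sum with $\{m\}$ as one summand. Your closing two-case argument with $\varepsilon\in\{0,1\}$ is just an explicit unpacking of that direct-sum conclusion.

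However, the substitution step fails as written. With your identifications $|B_\succeq(M)\cap\{e_1,\dots,e_i\}|=\rho(i)$ and $|B_\preceq(M)\cap\{e_i,\dots,e_n\}|=\sigma(i)$, condition \eqref{eq:MovePast} becomes
\[
-\bigl(\rho(i)+\sigma(i+1)-r_M(E)\bigr)\ \le\ \rho(i-1)+\sigma(i)-r_M(E),
\]
i.e.\ $-A\le B$ with $A,B\ge 0$; this is vacuous and does \emph{not} force $A=B=0$. Concretely, take $E=\{e_1\prec e_2\prec e_3\}$ with unique circuit $\{e_1,e_2\}$ and $m=e_1$: under your reading $B_\preceq=\{e_2,e_3\}$, $B_\succeq=\{e_1,e_3\}$, and \eqref{eq:MovePast} reads $-1\le 0$, yet $e_1$ is neither a loop nor a coloop. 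The issue is the direction of the greedy algorithm: for the lemma (and for the rank identities the paper actually uses) one needs $B_\preceq(M)$ to be the greedy basis built from $\preceq$-\emph{smallest} upward, so that $|B_\preceq(M)\cap\{e_1,\dots,e_i\}|=\rho(i)$ and $|B_\succeq(M)\cap\{e_i,\dots,e_n\}|=\sigma(i)$. With that swap, \eqref{eq:MovePast} becomes $A\le -B$, which genuinely forces $A=B=0$, and the remainder of your argument goes through verbatim. (The confusion is understandable: the paper's stated definition of $\init_\preceq$ via $\min_\preceq$ is itself in tension with the rank formulas appearing in its proof.)
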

\begin{proof}
  The following are easy to check using matroid contraction and
  deletion operations:
  \begin{align*}
    \abs{\{m'\in B_\preceq(M):m'\preceq m\}}&=r(\{m'\in E:m'\preceq
                                                 m\})\\
    \abs{\{m'\in B_\succeq(M):m'\preceq m\}}&=r(E)-r(\{m'\in
                                                 E:m'\succ
                                                 m\})\\
    \abs{\{m'\in B_\preceq(M):m'\succeq m\}}&=r(E)-r(\{m'\in
                                                 E:m'\prec
                                                 m\})\\
    \abs{\{m'\in B_\succeq(M):m'\succeq m\}}&=r(\{m'\in E:m'\succeq m\}).
  \end{align*}
  Note that
  $r(\{m'\in E:m'\preceq m\})+r(\{m'\in E:m'\succ m\})\ge r(E),$ with
  equality if and only if $M$ is the direct sum of matroids on the
  groundsets $\{m'\in E:m'\preceq m\}$ and $\{m'\in E:m'\succ
  m\}$. Similarly,
  $r(\{m'\in E:m'\prec m\})+r(\{m'\in E:m'\succeq m\})\ge r(E),$ with
  equality if and only if $M$ is the direct sum of matroids on the
  groundsets $\{m'\in E:m'\prec m\}$ and $\{m'\in E:m'\succeq
  m\}$. Thus the left side of \eqref{eq:MovePast} is nonnegative, the
  right side is nonpositive, and both are zero if and only if $M$ is a
  direct sum of matroids on the groundsets $\{m'\in E:m'\prec m\}$,
  $\{m\}$, and $\{m'\in E:m'\succ m\}$. Thus $m$ is either a
  loop (if the summand on $\{m\}$ has rank zero), or a coloop (if that
  summand has rank 1).
\end{proof}
We apply Theorem \ref{thm:SchurPolynomials} via the observation that
one can obtain a finite-colength ideal from a principal ideal by
adding an appropriate monomial ideal: if $I$ is
$\mathbf{a}$-homogeneous, and $N$ is a monomial ideal, then $I+N$ is
also $\mathbf{a}$-homogeneous. (Of course, not all finite-colength
ideals can be obtained this way, e.g. $(x^2-xy,xy-y^2,x^3)$
cannot.) 
\begin{ex}\label{ex:AddingMonomialsDoesNotCommute}
  Consider $I=(x^2-y^2)\in(\P^1)^{[2]}.$ If $N=(x^3)$, then
  $I+N=(x^2-y^2,x^3)$ is an ideal of colength 6, with tropicalization
  shown in Figure \ref{fig:AddMonomials}. Note that adding $N$ does
  not commute with taking initial ideals; for example,
  $\init_{x<y}(I)+N=(x^2)+(x^3)=(x^2),$ while
  $\init_{x<y}(I+N)=(x^2,xy^2,y^4).$
  \begin{figure}
    \centering
    \includegraphics[height=1.5in]{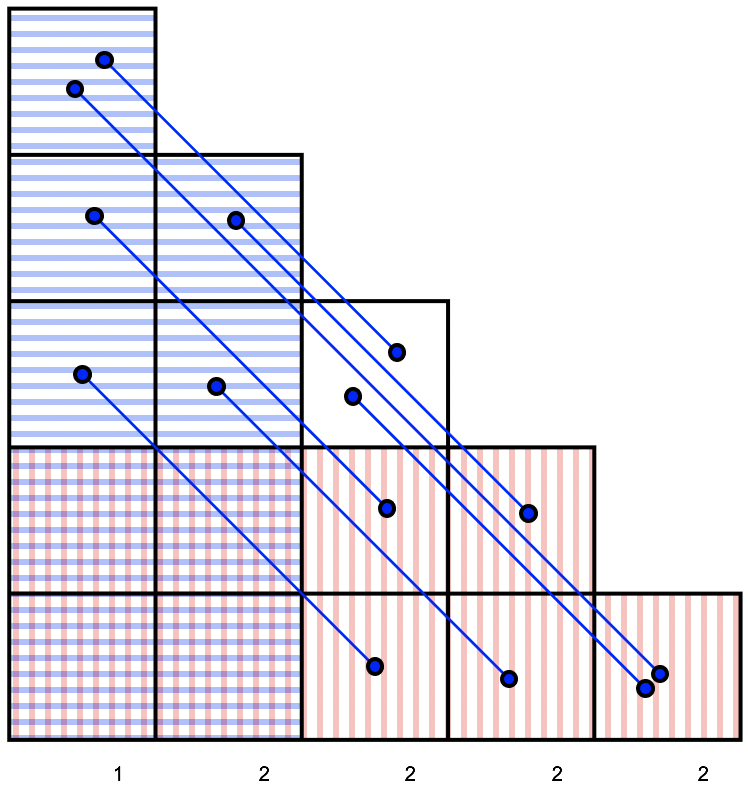}\quad\quad\quad\quad
    \includegraphics[height=1.5in]{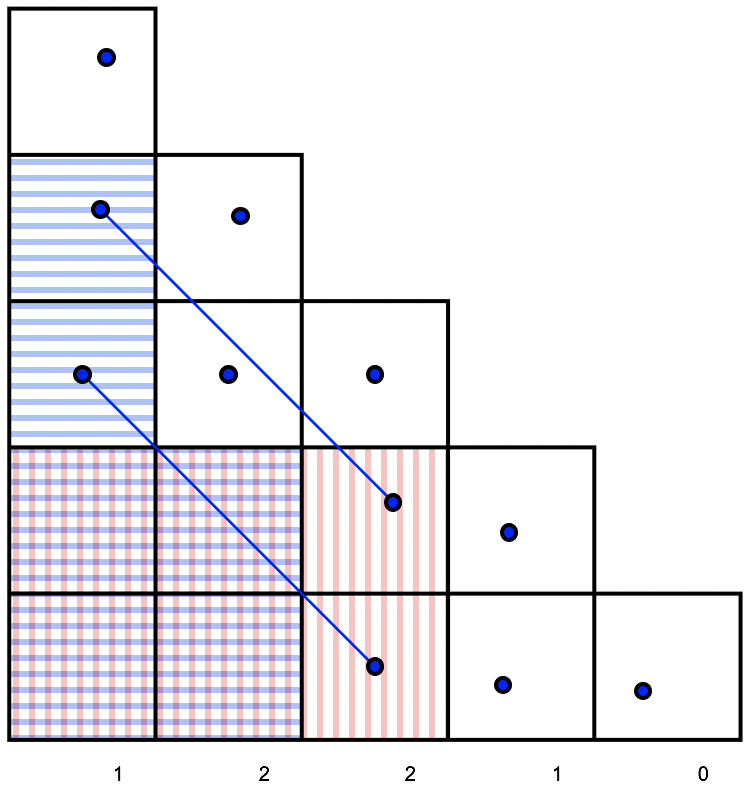}
    \caption{Tropicalizations of $I=(x^2-y^2)$ (left) and $I+(x^3)$ (right).}
    \label{fig:AddMonomials}
  \end{figure}
\end{ex}
\begin{Def}
  A homogeneous ideal $I\subseteq R$ is \textit{PPM} (short for
  \textit{principal plus monomial}) if $I=(f)+N$ for some $f\in R$
  homogeneous, and some monomial ideal $N$.
\end{Def}
The analogous operation of matroids is the ``looped contraction.'' (We
do not know of a standard term for this operation.)
\begin{Def}\label{Def:LoopedContraction}
  Let $M=(E,r)$ be a matroid, and let $S\subseteq E.$ The
  \textit{contraction} $M/S$ of $M$ at $S$ is the matroid with
  groundset $E\setminus S$ whose circuits are the minimal elements of
  $\{S'\cap(E\setminus S):\text{$S'$ a circuit of $M$}\}$. In other
  words, for $T\subseteq E\setminus S,$
  $r_{M/S}(T)=r_M(S\cup T)-r_M(S).$

  The \textit{looped contraction} $M\div S$ is the matroid
  $M\div S=M/S\oplus U_{0,S},$ where $U_{0,S}$ is the uniform matroid
  from Example \ref{ex:UniformMatroid}. Note $M\div S$ has groundset
  $E$ and rank $r(M)-r(S)$, and elements of $S$ are loops in
  $M\div S$. The rank function is given, for $T\subseteq E,$
  $r_{M\div S}(T)=r_M(S\cup T)-r_M(S)$.

  Let $\mathscr{M}$ be a tropical ideal, and let $N$ be a monomial
  ideal. The \textit{looped contraction} $\mathscr{M}\div N$ of
  $\mathscr{M}$ at $N$ is the tropical ideal defined by
  $(\mathscr{M}\div N)_d=\mathscr{M}_d\div S_d$, where $S_d$ is the
  set of monomials in $N_d$. It is straightforward to check that
  $\mathscr{M}\div N$ is a tropical ideal.
  
  A tropical ideal $\mathscr{M}$ is \textit{tropically principal} if
  it has the Hilbert function of a principal ideal, i.e. if there
  exists $c\in\Z_{\ge0}^b$ such that
  $$\rk(\mathscr{M}_d)=\abs{\Mon_d(\mathbf{a})}-\abs{\Mon_{d-c}(\mathbf{a})}$$
  for all $d\in\Z_{\ge0}^b$. (We say $\mathscr{M}$ is
  \textit{generated in degree $c$}.) A tropical ideal $\mathscr{M}$ is
  \textit{PPM} if there exists a tropically principal tropical ideal
  $\mathscr{M}'$ and a monomial ideal $N$ such that
  $\mathscr{M}=\mathscr{M'}\div N.$
\end{Def}
A straightforward calculation using the rank function in Definition
\ref{Def:LoopedContraction} yields:
\begin{prop}\label{prop:AddMonomialsContract}
  For any homogeneous ideal $I$ and any monomial ideal
  $N$, 
  $\Trop(I+N)=\Trop(I)\div N.$
\end{prop}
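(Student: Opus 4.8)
The plan is to prove the identity one multidegree at a time and reduce it to a linear-algebra statement. Fix $d$ and set $E=\Mon_d(\mathbf{a})$, so $R_d\cong\mathbbm{k}^E$. Since $I$ and $N$ are $\mathbf{a}$-homogeneous, $(I+N)_d=I_d+N_d$, and since $N$ is a \emph{monomial} ideal, $N_d$ is the coordinate subspace $\mathbbm{k}^{S_d}$, where $S_d\subseteq E$ is the set of degree-$d$ monomials in $N$. Unwinding Definitions \ref{Def:Tropicalization} and \ref{Def:LoopedContraction}, it therefore suffices to prove: for any subspace $V\subseteq\mathbbm{k}^E$ and any subset $S\subseteq E$,
\[
  \Trop(V+\mathbbm{k}^S)=\Trop(V)\div S.
\]
Both sides are matroids on $E$, so I would just check that their rank functions agree on an arbitrary $T\subseteq E$.

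For the computation I would use the coordinate projection $\pi\colon\mathbbm{k}^E\to\mathbbm{k}^{E\setminus S}$ with kernel $\mathbbm{k}^S$, noting $V+\mathbbm{k}^S=\pi^{-1}(\pi(V))$, and write $T'=T\setminus S$. Restricting $\pi$ to $(V+\mathbbm{k}^S)\cap\mathbbm{k}^T$ gives a surjection onto $\pi(V)\cap\mathbbm{k}^{T'}$ (any $u\in\pi(V)\cap\mathbbm{k}^{T'}$, viewed inside $\mathbbm{k}^{T'}\subseteq\mathbbm{k}^T$, is its own preimage) with kernel all of $\mathbbm{k}^{T\cap S}$, since $\mathbbm{k}^{T\cap S}\subseteq\mathbbm{k}^S\subseteq V+\mathbbm{k}^S$; hence $\dim\big((V+\mathbbm{k}^S)\cap\mathbbm{k}^T\big)=\abs{T\cap S}+\dim\big(\pi(V)\cap\mathbbm{k}^{T'}\big)$, and so $r_{\Trop(V+\mathbbm{k}^S)}(T)=\abs{T'}-\dim\big(\pi(V)\cap\mathbbm{k}^{T'}\big)$. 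On the other side, Definition \ref{Def:LoopedContraction} together with the rank formula for $\Trop(V)$ gives $r_{\Trop(V)\div S}(T)=r_{\Trop(V)}(S\cup T)-r_{\Trop(V)}(S)=\abs{T'}-\dim\big(V\cap\mathbbm{k}^{S\cup T'}\big)+\dim\big(V\cap\mathbbm{k}^S\big)$, using $\abs{S\cup T}=\abs{S}+\abs{T'}$.

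The two expressions match provided $\dim\big(\pi(V)\cap\mathbbm{k}^{T'}\big)=\dim\big(V\cap\mathbbm{k}^{S\cup T'}\big)-\dim\big(V\cap\mathbbm{k}^S\big)$. This is the one step with any content: $\pi$ restricts to a surjection $V\cap\mathbbm{k}^{S\cup T'}\to\pi(V)\cap\mathbbm{k}^{T'}$ --- surjective because any $w\in V$ with $\pi(w)\in\mathbbm{k}^{T'}$ is automatically supported on $S\cup T'$ --- with kernel $V\cap\mathbbm{k}^S$, so the identity follows from rank-nullity. Substituting back proves the claim, and hence $\Trop(I+N)=\Trop(I)\div N$.

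The argument is entirely bookkeeping; the only place to be careful is the distinction between $T\cap S$ and $T\setminus S$ and the claim that $\pi(V\cap\mathbbm{k}^{S\cup T'})=\pi(V)\cap\mathbbm{k}^{T'}$, which is where the monomial (i.e.\ coordinate-subspace) hypothesis on $N$ is used. As a sanity check one should also verify the degenerate cases --- e.g.\ that each $m\in S_d$ is a loop on both sides, matching the $\oplus\,U_{0,S}$ in the looped contraction --- and observe that this is precisely the ``straightforward calculation'' referenced before the statement.
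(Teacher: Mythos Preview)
Your proof is correct and is precisely the ``straightforward calculation using the rank function in Definition~\ref{Def:LoopedContraction}'' that the paper alludes to but does not spell out. You reduce to a single degree, compare the two rank functions on an arbitrary $T$, and verify equality via the coordinate projection $\pi$; this is exactly the intended argument, and your identification of where the monomial hypothesis on $N$ enters (namely, that $N_d=\mathbbm{k}^{S_d}$ is a coordinate subspace, so that $\pi(V\cap\mathbbm{k}^{S\cup T'})=\pi(V)\cap\mathbbm{k}^{T'}$) is on point.
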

\begin{cor}
  Let $\mathscr{M}$ be a tropically principal tropical ideal, and let
  $N$ be a monomial ideal. Then $I\mapsto I+N$ defines a morphism
  $\mathcal{S}(\mathscr{M})\to\mathcal{S}(\mathscr{M}\div N)$. (Note
  that $\mathcal{S}(\mathscr{M}\div N)$ lies in a single multigraded
  Hilbert scheme.)
\end{cor}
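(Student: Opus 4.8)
The plan is to establish the map on points first, then promote it to a morphism via the universal property of the Hilbert scheme. Let $T=\mathcal{S}(\mathscr{M})$ and let $S_d\subseteq\Mon_d(\mathbf{a})$ denote the set of monomials occurring in $N_d$. For a point $I\in T$ we have $\Trop(I)=\mathscr{M}$ by definition of the stratum, so Proposition~\ref{prop:AddMonomialsContract} gives $\Trop(I+N)=\mathscr{M}\div N$ \emph{exactly}. In particular the multigraded Hilbert function of $I+N$ is the function $h'$ with $h'(d)=\rk(\mathscr{M}_d)-\rk_{\mathscr{M}_d}(S_d)$, independent of $I\in T$; thus $I+N$ lies in the single Hilbert scheme $\Hilb_{\mathbf{a}}^{h'}(\A^r)$, and, because $\Trop(I+N)$ equals $\mathscr{M}\div N$ on the nose, in the stratum $\mathcal{S}(\mathscr{M}\div N)$. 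So $I\mapsto I+N$ is at least a well-defined map of sets $T\to\mathcal{S}(\mathscr{M}\div N)$.

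To see it is a morphism, I would write down the corresponding flat family directly. Let $\cI$ be the restriction to $T$ of the universal ideal sheaf on $\Hilb_{\mathbf{a}}^h(\A^r)$, and let $\cN_d\subseteq\cR_d$ be the trivial subsheaf with fiber $N_d$ (legitimate since $N_d$ is a coordinate subspace). Put $\cJ_d:=\cI_d+\cN_d$. Since both $\cI$ and $N$ are closed under multiplication by the variables, $\cJ=\bigoplus_d\cJ_d$ is an $\mathbf{a}$-homogeneous sheaf of ideals on $T\times\A^r$, and by the universal property of $\Hilb_{\mathbf{a}}^{h'}(\A^r)$ it defines a morphism $\Phi\colon T\to\Hilb_{\mathbf{a}}^{h'}(\A^r)$ as soon as each $\cR_d/\cJ_d$ is locally free of rank $h'(d)$. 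Granting that, $\Phi$ factors through the closed-and-open conditions defining $\mathcal{S}(\mathscr{M}\div N)$: for $U$ meeting $S_d$ the section $\sigma_U$ pulls back to zero because the monomials of $N_d$ map to $0$ in $\cR_d/\cJ_d$ by construction; for $U$ disjoint from $S_d$, dependence (resp.\ independence) of $U$ in $(\mathscr{M}\div N)_d$ translates, using Proposition~\ref{prop:IntersectDependenceLoci} to reduce to the case $|U|=h'(d)$ where $\bigwedge^{h'(d)}\cR_d/\cJ_d$ is a line bundle, into vanishing (resp.\ nowhere-vanishing, i.e.\ invertibility) of $\Phi^{*}\sigma_U$.

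The main obstacle is the remaining input: local freeness of $\cR_d/\cJ_d$, equivalently that the image $\overline{\cN_d}$ of $\cN_d$ in $\cQ_d$ is a subbundle. The key leverage is that the matroid $\mathscr{M}_d$ is \emph{literally constant} along $T$. Near a point $I_0\in T$, choose a basis $B\subseteq S_d$ of the restricted matroid $\mathscr{M}_d|_{S_d}$; the monomials of $B$ are linearly independent modulo $(I_0)_d$, which is an open condition, so $\Span(B)\otimes\O_T\to\cQ_d$ is fiberwise injective on a neighborhood $V$ of $I_0$, hence a subbundle inclusion there with locally free quotient $\cQ_d/\Span(B)$. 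Because $B$ also spans $S_d$ in the (constant) matroid, every monomial of $S_d$ maps to $0$ in $(\cQ_d/\Span(B))\otimes k(I)$ at every $I\in V$; when $T$ carries its reduced structure this forces $\overline{\cN_d}=\Span(B)$ on $V$, so $\cR_d/\cJ_d$ is locally free and the corollary follows. Checking this last implication for the (possibly non-reduced) scheme structure on a matroid stratum — or simply working with reduced matroid strata throughout — is the only non-formal point; everything else in the argument is routine bookkeeping with the universal sequences \eqref{eq:TautSeqD}.
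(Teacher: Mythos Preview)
The paper states this corollary immediately after Proposition~\ref{prop:AddMonomialsContract} with no proof at all; it is treated as an immediate consequence. Your argument is correct and substantially more careful than anything the paper provides: you actually construct the family $\cJ=\cI+\cN$, verify local freeness of $\cR_d/\cJ_d$, and invoke the universal property of the Hilbert scheme to obtain a genuine morphism of schemes, then check it lands in the right stratum. The non-formal point you flag---that local freeness (equivalently, that $\overline{\cN_d}$ is a subbundle) is only clear on the reduced structure of the stratum---is real and is not addressed in the paper either; presumably the paper is implicitly content with reduced strata, or is reading ``morphism'' loosely. Two further remarks: first, your argument never uses the hypothesis that $\mathscr{M}$ is tropically principal, so you have in fact proved the more general statement for arbitrary $\mathscr{M}$; the hypothesis is there only because the paper's application is to PPM ideals. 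Second, the ``tropically principal'' assumption does buy a shortcut you did not take: in that case $\mathcal{S}(\mathscr{M})$ sits inside a projective space $\P(R_c)$ of degree-$c$ forms, and one can write the map $(f)\mapsto(f)+N$ in explicit coordinates, which makes the morphism claim essentially obvious without any subbundle verification---this is perhaps what the paper had in mind.
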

\begin{cor}\label{cor:TropPPM}
  Let $J\subseteq R$ be an ideal. If $J$ is PPM, then $\Trop(J)$ is
  PPM.
\end{cor}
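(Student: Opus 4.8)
The plan is to reduce the statement directly to Proposition \ref{prop:AddMonomialsContract}. By the definition of a PPM ideal I would write $J=(f)+N$ with $f\in R$ an $\mathbf{a}$-homogeneous polynomial and $N$ a monomial ideal. Proposition \ref{prop:AddMonomialsContract}, applied with $I=(f)$, then gives
$$\Trop(J)=\Trop\big((f)+N\big)=\Trop\big((f)\big)\div N.$$
So, setting $\mathscr{M}'=\Trop((f))$, the only thing left to verify is that $\mathscr{M}'$ is a \emph{tropically principal} tropical ideal; once that is done, $\Trop(J)=\mathscr{M}'\div N$ is by definition PPM.

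To check that $\Trop((f))$ is tropically principal, I would use that the multigraded Hilbert functions of an ideal and its tropicalization agree by definition, and compute the Hilbert function of the principal ideal $(f)$ directly. Since $R=\mathbbm{k}[x_1,\ldots,x_r]$ is a domain, multiplication by $f$ is injective, so if $c$ denotes the multidegree of $f$ then $\dim_{\mathbbm{k}}(f)_d=\dim_{\mathbbm{k}}R_{d-c}=\abs{\Mon_{d-c}(\mathbf{a})}$, whence
$$\rk\big(\Trop((f))_d\big)=\abs{\Mon_d(\mathbf{a})}-\abs{\Mon_{d-c}(\mathbf{a})}$$
for every $d\in\Z_{\ge0}^b$. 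This is exactly the condition in Definition \ref{Def:LoopedContraction} that $\Trop((f))$ be tropically principal, generated in degree $c$.

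The only wrinkle is the degenerate case $f=0$, i.e.\ when $J$ is itself a monomial ideal. If $J\neq 0$ I would just replace $f$ by any monomial appearing in a generating set of $J$, so that $J=(m)+J$ with $(m)$ genuinely principal, and run the argument above with $N$ replaced by $J$; if $J=0$ there is nothing to prove. I do not expect a real obstacle here: all the content sits in Proposition \ref{prop:AddMonomialsContract}, and this corollary is essentially an exercise in unwinding the definitions of ``PPM'' for ideals and for tropical ideals --- the only thing that needs genuine care is matching the bookkeeping in the definition of ``tropically principal'' and disposing of the trivial case so that the statement holds verbatim.
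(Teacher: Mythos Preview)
Your proposal is correct and is exactly the intended argument: the paper states this corollary immediately after Proposition~\ref{prop:AddMonomialsContract} without proof, and your reduction---write $J=(f)+N$, apply the proposition to get $\Trop(J)=\Trop((f))\div N$, and note that $\Trop((f))$ is tropically principal because $(f)$ has the Hilbert function of a principal ideal---is precisely what is implicit. The extra care you take with the degenerate case $f=0$ and the explicit Hilbert-function computation is more detail than the paper provides, but entirely in the same spirit.
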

The converse of Corollary \ref{cor:TropPPM} does not hold:
\begin{ex}
  Let $\mathbbm{k}=\C.$ The tropical ideal $\mathscr{M}$ in Figure
  \ref{fig:PPMCounterexample} is PPM, since
  $\mathscr{M}=\Trop((f)+N)$, where $f=x^3+x^2y+2xy^2+y^3$ and
  $N=(x^4,x^3y^2,x^2y^3,y^4)$. (It is straightforward to check that
  the roots of $f$ do not differ by 4th roots of unity, hence
  $f\not\in\mathcal{D}(\{x^4,y^4\})$. This implies that
  $\Trop((f)+N)_4$ has rank 1, as shown.) On the other hand, we also
  have $\mathscr{M}=\Trop(((x-y)(x-iy)(x+y),x^3y+2x^2y^2)+N),$ as
  follows. Since $((x-y)(x-iy)(x+y))\in\mathcal{D}(\{x^4,y^4\})$,
  $\Trop(((x-y)(x-iy)(x+y))+N)_4$ has rank 2, and adding
  $x^3y+2x^2y^2$ reduces the rank to 1. (It is again easy to check
  that neither ideal contains any extra monomials in degree 4.)

  Lastly, we observe that $((x-y)(x-iy)(x+y),x^3y+2x^2y^2)+N$ is not
  PPM. If it were, it would necessarily be generated in
  degree 4 by $\{x(x-y)(x-iy)(x+y),y(x-y)(x-iy)(x+y),x^4,y^4\}$; these
  span too small a subspace.
  \begin{figure}
    \centering
    \includegraphics[height=1.5in]{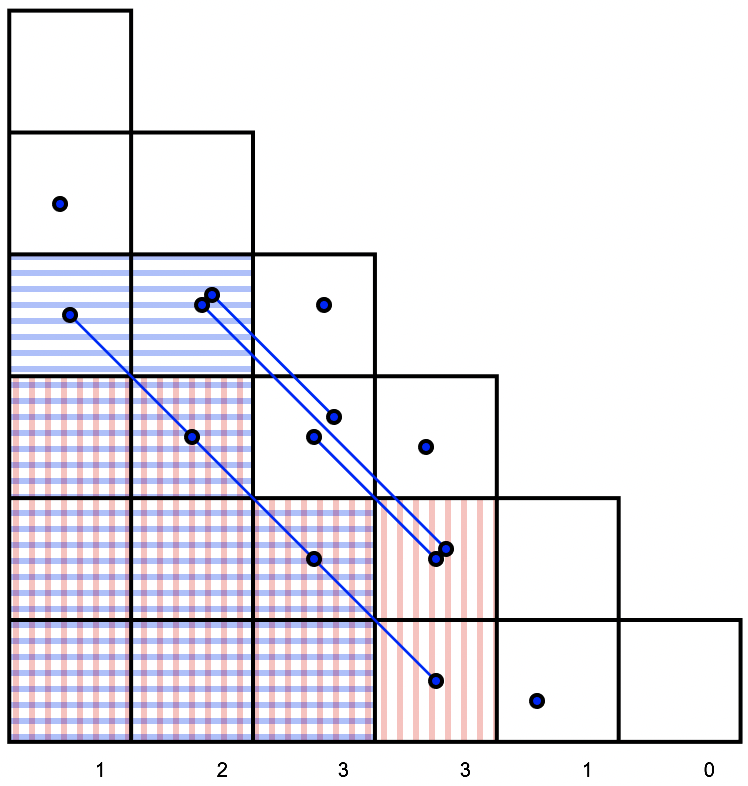}
    \caption{A tropical ideal that is the tropicalization of both a
      PPM ideal and a non-PPM ideal.}
    \label{fig:PPMCounterexample}
  \end{figure}
\end{ex}

The following is the key observation for applying Section \ref{sec:P1}
to Hilbert schemes of finite-length subschemes.
\begin{lem}\label{lem:ConstraintPreserved}
  Let $I=(f)+N$ be a PPM ideal. Let $U\subseteq N_d$ be a
  set of monomials such that
  $\abs{U}>r_{\Trop((f))}(\Mon_d(a_1,a_2))-r_{\Trop(I)}(\Mon_d(a_1,a_2)).$
  Then $(f)\in\mathcal{D}(U).$
\end{lem}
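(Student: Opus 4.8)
The plan is to translate the matroid-rank hypothesis into a plain dimension count in $R_d$ and then apply pigeonhole. First I would recall that, with the rank convention of Definition~\ref{Def:Matroid}, for any subspace $V\subseteq\mathbbm{k}^E$ one has $r_{\Trop(V)}(E)=\dim(\mathbbm{k}^E/V)$. Taking $E=\Mon_d(a_1,a_2)$, so that $\mathbbm{k}^E=R_d$, and applying this with $V=(f)_d$ and with $V=I_d$, the right-hand side of the hypothesis becomes
\[
  r_{\Trop((f))}(\Mon_d)-r_{\Trop(I)}(\Mon_d)=\dim(R_d/(f)_d)-\dim(R_d/I_d)=\dim\bigl(I_d/(f)_d\bigr),
\]
where the last equality uses $(f)_d\subseteq I_d$. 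Since $I=(f)+N$ gives $I_d=(f)_d+N_d$, the second isomorphism theorem yields $I_d/(f)_d\cong N_d/(N_d\cap(f)_d)$, so the hypothesis says precisely that $\abs{U}>\dim\bigl(N_d/(N_d\cap(f)_d)\bigr)$.

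Next I would use that $N$ is a monomial ideal: $N_d$ has the degree-$d$ monomials of $N$ as a basis, and $U$ is a subset of that basis, so the coordinate subspace $\mathbbm{k}^U\subseteq R_d$ spanned by $U$ is contained in $N_d$. Consider the composite linear map
\[
  \mathbbm{k}^U\into N_d\onto N_d/(N_d\cap(f)_d).
\]
Its source has dimension $\abs{U}$, which by hypothesis strictly exceeds the dimension of its target, so the composite has a nonzero kernel; any nonzero element $g$ of that kernel lies in $N_d\cap(f)_d$ and is supported on $U$, that is, $g\in(f)_d\setminus\{0\}$ with $\supp(g)\subseteq U$.

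Finally I would invoke the description of $\mathcal{D}(U)$ from the ``dependence loci'' subsection (equivalently Lemma~\ref{lem:UnionOfCircuits}): the existence of such a $g$ means exactly that the monomials of $U$ are linearly dependent modulo $(f)_d$, i.e.\ that $U$ is a dependent set of $\Trop((f))_d$, equivalently that $\sigma_U$ vanishes at $(f)$; this is the assertion $(f)\in\mathcal{D}(U)$. The whole argument is essentially a one-line dimension count once the setup is unwound, so I do not expect a serious obstacle; the only point requiring care is bookkeeping with conventions --- in particular that this paper's $\Trop$ uses $r(S)=\dim(\mathbbm{k}^S/(V\cap\mathbbm{k}^S))$ (dual to some references), so that the genuine ``rank drop'' from $(f)$ to $I$ is $\dim(I_d/(f)_d)$ rather than its negative, and that $U$ being a set of \emph{monomials} inside $N$ is exactly what makes $\mathbbm{k}^U\subseteq N_d$ hold without needing $N_d\cap(f)_d$ to itself be a coordinate subspace.
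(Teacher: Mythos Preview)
Your argument is correct and is essentially the paper's proof unwound into linear algebra: the paper invokes Proposition~\ref{prop:AddMonomialsContract} to get $r_{\Trop((f))}(\Mon_d)-r_{\Trop(I)}(\Mon_d)=r_{\Trop((f))}(N_d)$ and then uses rank monotonicity $r_{\Trop((f))}(U)\le r_{\Trop((f))}(N_d)<\abs{U}$, which is exactly your statement $\abs{U}>\dim\bigl(N_d/(N_d\cap(f)_d)\bigr)$ together with your kernel/pigeonhole step, rephrased via the rank function $r(S)=\dim(\mathbbm{k}^S/(V\cap\mathbbm{k}^S))$.
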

\begin{proof}
  By Proposition \ref{prop:AddMonomialsContract},
  \begin{align*}
    r_{\Trop(I)}(\Mon_d(a_1,a_2))=r_{\Trop((f))}(\Mon_d(a_1,a_2))-r_{\Trop((f))}(N_d).
  \end{align*}
  By assumption,
  $$\abs{U}>r_{\Trop((f))}(\Mon_d(a_1,a_2))-r_{\Trop(I)}(\Mon_d(a_1,a_2))=r_{\Trop((f))}(N_d)\ge
  r_{\Trop((f))}(U),$$ so $U$ is dependent.
\end{proof}

\begin{rem}
  One can apply Lemma \ref{lem:ConstraintPreserved} as follows. Often,
  it can be argued that a given matroid stratum (or edge scheme)
  $\mathcal{S}$ must consist \textit{only} of PPM ideals. In this
  case, recording the nonmonomial generator (where monomials in $N$
  are given coefficient zero) defines a natural embedding from
  $\mathcal{S}$ to a Hilbert scheme
  $\Hilb_{\mathbf{a}}^h(\P^{r-1})\cong\P^N$ of \textit{principal}
  ideals. Lemma \ref{lem:ConstraintPreserved} then says that the
  embedding factors through
  $\bigcap_U\mathcal{D}(U)\subseteq\Hilb_{\mathbf{a}}^h(\P^{r-1}),$
  where $U$ runs over sets satisfying the condition in the hypothesis.
\end{rem}
We conclude by illustrating this method in our running example, Example
\ref{ex:Necklace1}.
\begin{thm}\label{thm:Necklace3}
  Let $k\ge1$ and $d_0>k$. Let $M_1$ (resp. $M_2$) be the partition
  whose Young diagram is an $d_0\times k$ (resp. $k\times d_0$)
  rectangle. Then the edge scheme
  $E(M_1,M_2)\subseteq(\A^2)^{[d_0\cdot k]}$ is isomorphic to
  $\mathcal{D}(\{x^{d_0},y^{d_0}\})\subseteq(\P^1)^{[k]}$, i.e. it
  consists of a collection of rational curves, indexed by necklaces
  with $k$ black and $d_0-k$ white beads, all of which meet at two
  points.
\end{thm}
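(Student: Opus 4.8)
The plan is to show that every ideal of $E(M_1,M_2)$ is PPM of a very rigid shape, use Lemma~\ref{lem:ConstraintPreserved} to identify its principal part with a point of $\mathcal{D}(\{x^{d_0},y^{d_0}\})$, and then upgrade the resulting bijection to an isomorphism of schemes; the description as a union of necklace-indexed curves is then immediate from Example~\ref{ex:Necklace1}. Write $R=\mathbbm{k}[x,y]$; recall $M_1=(x^k,y^{d_0})$ and $M_2=(x^{d_0},y^k)$.

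First I would identify the ambient Hilbert scheme. An ideal $I$ lying on a $1$-dimensional $T$-orbit with limits $M_1,M_2$ is $\mathbf{a}$-homogeneous for the unique positive grading $\mathbf{a}=(a_1,a_2)$ of that orbit, and then $M_1,M_2$ have the same $\mathbf{a}$-graded Hilbert function. Comparing the top $\mathbf{a}$-degrees of the two rectangular staircases gives $(k-1)a_1+(d_0-1)a_2=(d_0-1)a_1+(k-1)a_2$; since $d_0>k$ this forces $\mathbf{a}=(1,1)$, so $E(M_1,M_2)\subseteq\Hilb^h_{(1,1)}(\A^2)\subseteq(\A^2)^{[d_0 k]}$, where a short count gives $h(d)=d+1$ for $d<k$, $h(d)=k$ for $k\le d\le d_0-1$, and $h(d)=\max(0,k+d_0-1-d)$ for $d\ge d_0$ (so $\sum_d h(d)=d_0 k$).

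Now the structural heart. Fix $I\in E(M_1,M_2)$. Since $\dim I_d=0$ for $d<k$ and $\dim I_k=1$, we have $I_k=\langle f\rangle$ for a single degree-$k$ form, normalized as $f=x^k+c_1x^{k-1}y+\cdots+c_ky^k$. In degree $d_0$, the space $(f)_{d_0}=R_{d_0-k}\cdot f$ has dimension $d_0-k+1$, while $\dim I_{d_0}=d_0+1-h(d_0)=d_0-k+2$. Matching $\init_{x>y}(I)_{d_0}$ with $M_1\cap R_{d_0}$ and $\init_{y>x}(I)_{d_0}$ with $M_2\cap R_{d_0}$ (Lemma~\ref{lem:ConvexHull} is a handy tool for tracking which monomials are forced into $I$) shows $y^{d_0}\in I$ and $x^{d_0}\in I$; the one remaining dimension then forces $\langle x^{d_0},y^{d_0}\rangle\cap(f)_{d_0}\ne 0$, i.e.\ $f\mid c_1x^{d_0}+c_2y^{d_0}$ for some $(c_1,c_2)\ne(0,0)$. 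Ruling out the degenerate case $f=x^k$ (which gives $I=M_1\notin E(M_1,M_2)$; the case $f\mid y^{d_0}$ is impossible since $f$ has nonzero $x^k$-coefficient), we obtain $f\mid x^{d_0}-\lambda y^{d_0}$ for some $\lambda\in\mathbbm{k}^\times$. Writing $x^{d_0}-\lambda y^{d_0}=fg$ gives $(f,x^{d_0},y^{d_0})=(f,y^{d_0})$, of colength $d_0 k$ since $f$ is monic of degree $k$ in $x$; as $(f,x^{d_0},y^{d_0})\subseteq I$ and both have colength $d_0 k$,
\[
I=(f)+(x^{d_0},y^{d_0}),
\]
so $I$ is PPM with monomial part $(x^{d_0},y^{d_0})$ and $(f)\in\mathcal{D}(\{x^{d_0},y^{d_0}\})\subseteq(\P^1)^{[k]}$ (Example~\ref{ex:Necklace1}). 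Conversely, for any $(f)\in\mathcal{D}(\{x^{d_0},y^{d_0}\})$ one checks that $I=(f)+(x^{d_0},y^{d_0})$ is $(1,1)$-homogeneous of colength $d_0 k$, that $\{f,y^{d_0}\}$ is a Gr\"obner basis (coprime leading terms), so $\init_{x>y}(I)=M_1$, and that $x^{d_0}\in I$ (as $x^{d_0}=fg+\lambda y^{d_0}$) together with a colength comparison gives $\init_{y>x}(I)=M_2$; hence $I\in E(M_1,M_2)$, with $(x^k)\mapsto M_1$ and $(y^k)\mapsto M_2$.

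Finally I would make this a scheme isomorphism. Recording the non-monomial generator gives a closed immersion $E(M_1,M_2)\hookrightarrow(\P^1)^{[k]}$ (the Hilbert scheme of principal ideals) which, by Lemma~\ref{lem:ConstraintPreserved} applied with $U=\{x^{d_0},y^{d_0}\}$ (here $|U|=2>1=r_{\Trop((f))}(\Mon_{d_0})-r_{\Trop(I)}(\Mon_{d_0})$), factors scheme-theoretically through $\mathcal{D}(\{x^{d_0},y^{d_0}\})$; in the other direction the universal homogeneous ideal on $\mathcal{D}(\{x^{d_0},y^{d_0}\})$ together with the constant ideal $(x^{d_0},y^{d_0})$ gives a family of length-$d_0 k$ subschemes of $\A^2$, flat over $\mathcal{D}(\{x^{d_0},y^{d_0}\})$, and hence a morphism to $(\A^2)^{[d_0 k]}$ landing in $E(M_1,M_2)$; the two are mutually inverse since $(x^{d_0},y^{d_0})$ vanishes in degree $k$ and $I=(I_k)+(x^{d_0},y^{d_0})$. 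The step I expect to be the main obstacle is precisely this last one---checking that the scheme structures agree, i.e.\ that the family above is flat over the possibly non-reduced scheme $\mathcal{D}(\{x^{d_0},y^{d_0}\})$ rather than merely over its underlying set. This seems to require either a local comparison of the defining section $\sigma_{\{x^{d_0},y^{d_0}\}}$ with the Altmann--Sturmfels equations cutting out $E(M_1,M_2)$, or an application of the Corollary to Proposition~\ref{prop:AddMonomialsContract} stratum by stratum together with a gluing argument. Granting the isomorphism, Example~\ref{ex:Necklace1} immediately yields that $E(M_1,M_2)$ is a union of rational curves indexed by the necklaces $N_{d_0,k}$, all meeting at $M_1$ and $M_2$.
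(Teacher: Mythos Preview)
Your argument is correct and follows essentially the same strategy as the paper: show every $I\in E(M_1,M_2)$ is PPM with principal part in degree $k$, deduce $(f)\in\mathcal{D}(\{x^{d_0},y^{d_0}\})$, and check the converse. The technical route differs slightly---the paper establishes PPM by a degree-by-degree dimension count over the full range $d_0\le d\le d_0+k-1$ (using Lemma~\ref{lem:ConvexHull} to pin down monomials), then invokes Lemma~\ref{lem:ConstraintPreserved} for the dependence; you instead work entirely in degree $d_0$, extract the divisibility $f\mid c_1x^{d_0}+c_2y^{d_0}$ directly from the one-dimensional excess $\dim I_{d_0}-\dim(f)_{d_0}=1$, and finish with a colength comparison---but these are reorderings of the same ingredients rather than different ideas.

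Your concern about the scheme structure is well-placed: the paper's proof, like yours, really only establishes a set-theoretic bijection, and the word ``isomorphic'' in the statement is not justified beyond that. So the obstacle you flag is genuine, but it is a gap the paper shares rather than one you have introduced.
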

\begin{proof}
  First, we argue that any ideal $I$ in the edge scheme $E(M_1,M_2)$
  is PPM, with nonmonomial generator in degree $k$. Note that the
  Hilbert function of $M_1$ and $M_2$ with respect to the grading
  $(1,1)$ is
  $$(1,2,\ldots,\underbrace{k,k,\ldots,k}_{d_0-k+1},k-1,\ldots,1,0,0,\ldots)=
  \begin{cases}
    d+1&0\le d\le k-1\\
    k&k\le d\le d_0-1\\
    d_0+k-1-d&d_0\le d\le d_0+k-1\\
    0&d>d_0+k-1.
  \end{cases}
  $$ For $d\le k-1,$ $\dim I_d=0$. For $k\le d\le d_0-1,$
  $\dim I_d=d+1-k,$ which implies $I_d$ is spanned by the $d+1-k$
  linearly independent monomial multiples of the generator of $I_k$.

  For $d_0\le d\le d_0+k-1$, $I_d$ contains the $d+1-k$ monomial
  multiples of the generator of $I_k$, as well as the $d-d_0+1$
  consecutive monomials $x^{d},x^{d-1}y,\ldots,x^{d_0}y^{d-d_0}$, by
  Lemma \ref{lem:ConvexHull}. Since $d-d_0<k$, by an
  upper-triangularity argument, these $(d+1-k)+(d-d_0+1)$ vectors are
  all linearly independent. On the other hand,
  $$\dim I_d=2d-k-d_0+2=(d+1-k)+(d-d_0+1).$$ In particular, if $f$ is
  a generator of $I_k$, we have shown that $I=(f)+(x^{d_0}),$ hence is
  PPM. (This is from the case $d=d_0$.)

  Next, we apply Lemma \ref{lem:ConstraintPreserved}. Again by Lemma
  \ref{lem:ConvexHull}, $y^{d_0}\in I,$ so we may as well write
  $I=(f)+(x^{d_0},y^{d_0}).$ Let $U=\{x^{d_0},y^{d_0}\}$, and note that
  $$2=\abs{U}>r_{\Trop((f))}(\Mon_{d_0})-r_{\Trop(I)}(\Mon_{d_0})=k-(k-1)=1.$$
  By Lemma \ref{lem:ConstraintPreserved}, $f\in\mathcal{D}(U).$ This
  shows that $\mathcal{M}(M_1,M_2)\subseteq\mathcal{N}_{d_0,k}$, and
  the opposite inclusion follows immediately from counting ranks in
  each grade.
\end{proof}
  Theorem \ref{thm:Necklace3} immediately generalizes, with the same
  proof, to the case where $M_1$ and $M_2$ are both ``cut off'' in
  some degree $d_1>d_0.$ For example,
  $\M(M_1,M_2)\cong \mathcal{N}_{6,4},$ where
  \begin{align*}
    M_1&=\raisebox{-4pt}{
    \begin{tikzpicture}[scale=0.1]
      \filldraw[white] (-.2,-.2) -- (6.2,-.2) -- (6.2,4.2) -- (-.2,4.2) -- cycle;
	 \draw (0,0) -- (1,0) -- (1,1) -- (0,1) -- cycle;
	 \draw (1,0) -- (2,0) -- (2,1) -- (1,1) -- cycle;
	 \draw (2,0) -- (3,0) -- (3,1) -- (2,1) -- cycle;
	 \draw (3,0) -- (4,0) -- (4,1) -- (3,1) -- cycle;
	 \draw (4,0) -- (5,0) -- (5,1) -- (4,1) -- cycle;
	 \draw (5,0) -- (6,0) -- (6,1) -- (5,1) -- cycle;
	 \draw (0,1) -- (1,1) -- (1,2) -- (0,2) -- cycle;
	 \draw (1,1) -- (2,1) -- (2,2) -- (1,2) -- cycle;
	 \draw (2,1) -- (3,1) -- (3,2) -- (2,2) -- cycle;
	 \draw (3,1) -- (4,1) -- (4,2) -- (3,2) -- cycle;
	 \draw (4,1) -- (5,1) -- (5,2) -- (4,2) -- cycle;
	 \draw (5,1) -- (6,1) -- (6,2) -- (5,2) -- cycle;
	 \draw (0,2) -- (1,2) -- (1,3) -- (0,3) -- cycle;
	 \draw (1,2) -- (2,2) -- (2,3) -- (1,3) -- cycle;
	 \draw (2,2) -- (3,2) -- (3,3) -- (2,3) -- cycle;
	 \draw (3,2) -- (4,2) -- (4,3) -- (3,3) -- cycle;
	 \draw (4,2) -- (5,2) -- (5,3) -- (4,3) -- cycle;
	 \draw (0,3) -- (1,3) -- (1,4) -- (0,4) -- cycle;
	 \draw (1,3) -- (2,3) -- (2,4) -- (1,4) -- cycle;
	 \draw (2,3) -- (3,3) -- (3,4) -- (2,4) -- cycle;
	 \draw (3,3) -- (4,3) -- (4,4) -- (3,4) -- cycle;
    \end{tikzpicture}
}&M_2&=\raisebox{-6pt}{
      \begin{tikzpicture}[scale=0.1]
        \filldraw[white] (-.2,-.2) -- (4.2,-.2) -- (4.2,6.2) -- (-.2,6.2) -- cycle;
	 \draw (0,0) -- (1,0) -- (1,1) -- (0,1) -- cycle;
	 \draw (1,0) -- (2,0) -- (2,1) -- (1,1) -- cycle;
	 \draw (2,0) -- (3,0) -- (3,1) -- (2,1) -- cycle;
	 \draw (3,0) -- (4,0) -- (4,1) -- (3,1) -- cycle;
	 \draw (0,1) -- (1,1) -- (1,2) -- (0,2) -- cycle;
	 \draw (1,1) -- (2,1) -- (2,2) -- (1,2) -- cycle;
	 \draw (2,1) -- (3,1) -- (3,2) -- (2,2) -- cycle;
	 \draw (3,1) -- (4,1) -- (4,2) -- (3,2) -- cycle;
	 \draw (0,2) -- (1,2) -- (1,3) -- (0,3) -- cycle;
	 \draw (1,2) -- (2,2) -- (2,3) -- (1,3) -- cycle;
	 \draw (2,2) -- (3,2) -- (3,3) -- (2,3) -- cycle;
	 \draw (3,2) -- (4,2) -- (4,3) -- (3,3) -- cycle;
	 \draw (0,3) -- (1,3) -- (1,4) -- (0,4) -- cycle;
	 \draw (1,3) -- (2,3) -- (2,4) -- (1,4) -- cycle;
	 \draw (2,3) -- (3,3) -- (3,4) -- (2,4) -- cycle;
	 \draw (3,3) -- (4,3) -- (4,4) -- (3,4) -- cycle;
	 \draw (0,4) -- (1,4) -- (1,5) -- (0,5) -- cycle;
	 \draw (1,4) -- (2,4) -- (2,5) -- (1,5) -- cycle;
	 \draw (2,4) -- (3,4) -- (3,5) -- (2,5) -- cycle;
	 \draw (0,5) -- (1,5) -- (1,6) -- (0,6) -- cycle;
	 \draw (1,5) -- (2,5) -- (2,6) -- (1,6) -- cycle;
       \end{tikzpicture}
       }.
  \end{align*}

\bibliographystyle{alpha}
\bibliography{research}

\end{document}